\documentclass[12pt,english,a4paper,oneside]{amsart}

\usepackage[cp1251]{inputenc}
\usepackage[english]{babel}
\usepackage{amsmath,amsthm,amssymb}

\usepackage[
a4paper,
left=22mm,
right=18mm,
top=20mm,
bottom=27mm
]{geometry}

\usepackage{xcolor}
\usepackage[colorlinks]{hyperref}

\newtheorem{theorem}{Theorem}[section]
\newtheorem{lemma}[theorem]{Lemma}
\newtheorem{corollary}[theorem]{Corollary}

\numberwithin{equation}{section}
\theoremstyle{definition}
\newtheorem{definition}[theorem]{Definition}
\newtheorem{remark}[theorem]{Remark}
\newtheorem{example}[theorem]{Example}

\begin{document}

\title{Nondegeneracy and regularity of polynomial pushforwards}

\author{Egor Kosov}

\address{\noindent Egor Kosov,
Centre de Recerca Matem\`atica, Campus de Bellaterra, Edifici~C 08193
Bellaterra (Barcelona), Spain.}
\email{kosoved09@gmail.com}

\author{Anastasiia Zhukova}

\address{\noindent
Anastasiia Zhukova,
Faculty of Mechanics and Mathematics, Lomonosov Moscow State University, Moscow, 119991 Russia}

\subjclass[2020]{Primary 60E15; Secondary 26D05, 52A40, 42B35, 60B10}

\keywords{Log-concave measure, polynomial mapping, pushforward measure,
small-ball estimate, Carbery--Wright inequality, Lorentz space,
Besov space, total variation distance, Kantorovich distance}

\begin{abstract}
Let $\mu$ be a log-concave probability measure on $\mathbb R^n$ and let
$f\colon\mathbb R^n\to\mathbb R^k$ be a polynomial mapping of degree at most
$d$. We show that
\[
\mu(f\in A)
\le
C\bigl(\lambda_k(A)\bigr)^{\frac{1}{k(d-1)+1}}
\]
for every Borel set $A\subset\mathbb R^k$
whenever the image measure $\mu\circ f^{-1}$ is absolutely
continuous.
The constant $C$ is independent of the dimension $n$, and the
exponent $\frac{1}{k(d-1)+1}$ is sharp. This extends the scalar
Carbery--Wright inequality and answers, in the log-concave setting, a question raised by Avni, Glazer, and Larsen.
In addition, we show that the density of $\mu\circ f^{-1}$, whenever it
exists, belongs to the Nikolskii--Besov space
$B^{\frac{1}{k(d-1)+1}}_{1,\infty}(\mathbb R^k)$,
with a dimension-free bound for the corresponding norm.

A central difficulty in passing from scalar polynomials to vector-valued
polynomial mappings is the lack of a suitable nondegeneracy parameter
quantifying absolute continuity of $\mu\circ f^{-1}$, as the variance does in
the scalar case. Natural candidates such as the covariance matrix or
the Jacobian matrix either fail to characterize this property or do not lead
to dimension-free estimates. 
We identify such a parameter and define it to be the covariance matrix of the
vector formed by the monomials of degree up to $d^{k-1}$ in the normalized
components of $f$.

The dimension-free nature of our results allows us to extend Kusuoka's
absolute continuity criterion for Gaussian polynomial random vectors to the
log-concave setting. Moreover, in this setting, we obtain estimates relating
convergence in distribution to convergence in total variation for polynomial
random vectors.
\end{abstract}
\maketitle

\section{Introduction}

It is by now well understood that scalar polynomials on convex bodies and,
more generally, with respect to log-concave measures possess a number of
important dimension-free analytic and distributional properties. These include
moment comparison inequalities, sharp small-ball estimates, quantitative
decay bounds for oscillatory integrals with polynomial phases, and fractional
regularity estimates for the densities of the associated one-dimensional image
measures. Such properties were studied, among others, by Bourgain
\cite{Bourgain91}, Bobkov \cite{Bobkov00}, Carbery and Wright~\cite{CW01},
and Nazarov, Sodin, and Volberg \cite{NSV2003}.

The main goal of the present paper is to go beyond the scalar
setting and to develop a dimension-free theory for image measures induced by
vector-valued polynomial mappings.
We denote by
$\mathcal P_d(\mathbb R^n)$ the space of all algebraic polynomials of degree
at most $d$ on $\mathbb R^n$, and by
$\mathcal P_d(\mathbb R^n;\mathbb R^k)$ the space of all mappings
$f=(f_1,\ldots,f_k)$ such that $f_j\in\mathcal P_d(\mathbb R^n)$ for all
$j=1,\ldots,k$.

We consider arbitrary log-concave
measures $\mu$ on $\mathbb R^n$ and mappings
$f\in\mathcal P_d(\mathbb R^n;\mathbb R^k)$, and investigate quantitative
properties of the image measure $\mu\circ f^{-1}$. We are interested in
small-ball estimates for $\mu\circ f^{-1}$ and in integrability and fractional
regularity properties of its density, when this density exists. The focus is on
dimension-free estimates, with constants independent of the ambient
dimension $n$. 
This viewpoint is motivated, in particular, by applications to probability and
to limit theorems for polynomial random vectors discussed below.

\subsection{Small-ball estimates}

The small-ball problem asks for the rate at which the concentration function
\begin{equation}\label{small-ball-def}
L(\mu\circ f^{-1},t):=
\sup_{y\in\mathbb R^k}\mu(|f-y|\le t)
=
\sup_{y\in\mathbb R^k}\mu\bigl(f\in B_t(y)\bigr)
\end{equation}
decays as $t\downarrow0$, where $|\cdot|$ denotes the Euclidean norm on
$\mathbb R^k$ and $B_t(y)$ is the Euclidean ball of radius $t$ centered at
$y$. 
Replacing balls in \eqref{small-ball-def} by arbitrary Borel sets leads to a
stronger form of the small-ball problem: to control
$\mu(f\in A)$
in terms of the Lebesgue measure $\lambda_k(A)$, uniformly over all Borel sets
$A\subset\mathbb R^k$. More precisely, one seeks estimates of the form
\begin{equation}\label{lorentz}
	\mu(f\in A)\le C\bigl(\lambda_k(A)\bigr)^\alpha
\end{equation}
for all Borel sets $A\subset\mathbb R^k$, with some $0<\alpha\le 1$.

For sums of independent random vectors, which correspond to linear images of
product measures generated by mappings
$f\in\mathcal P_1(\mathbb R^n;\mathbb R^k)$, these questions go back to the
works of L\'evy~\cite{Levy37} and Kolmogorov~\cite{Kolmogorov58}. More recently, they were studied by Rudelson and
Vershynin~\cite{RV15} and by Bobkov and Chistyakov~\cite{BCh14} in the
endpoint case $\alpha=1$. See also \cite{LPP16,MMX17,DPP18,Kos22} for further
developments.

In the scalar polynomial case, such estimates have been studied, in
particular, by Phong, Stein, and Sturm \cite{PSS01}, Carbery, Christ, and
Wright \cite{CCW99}, and, in a more general geometric form, by
Gressman~\cite{Gressman11}. When $\mu$ is log-concave, Carbery and Wright
\cite{CW01} and, independently, Nazarov, Sodin, and Volberg \cite{NSV2003}
proved that, for $f\in\mathcal P_d(\mathbb R^n)$, one has
\begin{equation}\label{CW-est}
	\|f\|_{L^2(\mu)}^{1/d}\mu(|f|\le t)
	\le C d\, t^{1/d}
	\quad \forall t>0,
\end{equation}
where $C>0$ is a universal constant independent of the dimension.
Consequently,
\begin{equation}\label{small-ball-CW}
L(\mu\circ f^{-1},t)\le
Cd(\operatorname{Var}_\mu(f))^{-\frac{1}{2d}}
\, t^{1/d}
\quad \forall t>0
\end{equation}
provided that the variance $\operatorname{Var}_\mu(f)$ of $f$ with respect
to $\mu$ is positive.
Moreover, the results of \cite{Kos25,Kos18} imply that, in the scalar case
$f\in\mathcal P_d(\mathbb R^n)$, one actually has the stronger estimate
\begin{equation}\label{Kos-est}
	\mu(f\in A)\le
	Cd(\operatorname{Var}_\mu(f))^{-\frac{1}{2d}}
	\bigl(\lambda_1(A)\bigr)^{1/d}
\end{equation}
for all Borel sets $A\subset\mathbb R$.
In particular, in this setting, the absolute continuity of
$\mu\circ f^{-1}$ is equivalent to the nondegeneracy condition
$\operatorname{Var}_\mu(f)>0$. Thus, \eqref{small-ball-CW} and \eqref{Kos-est}
hold if and only if the image measure $\mu\circ f^{-1}$ is absolutely
continuous.

In this paper, we obtain sharp dimension-free estimates of the form \eqref{lorentz} for
general polynomial mappings
$f\in\mathcal P_d(\mathbb R^n;\mathbb R^k)$ and log-concave measures $\mu$.
Namely, whenever the image measure
$\mu\circ f^{-1}$ is absolutely continuous, we prove that
\begin{equation}\label{lorentz-our}
	\mu(f\in A)
	\le
	C
	\bigl(\lambda_k(A)\bigr)^{\frac{1}{k(d-1)+1}}
\end{equation}
for every Borel set $A\subset\mathbb R^k$, 
where the constant $C$ depends on $k$, $d$, and certain quantitative
nondegeneracy parameters of $f$ with respect to $\mu$, but not on the ambient
dimension $n$.
This gives a vector-valued counterpart of the scalar estimate
\eqref{Kos-est} and implies the corresponding small-ball bound, thereby
extending the scalar Carbery--Wright estimate \eqref{CW-est}.
In particular, for log-concave measures, this answers the dimension-free
version of the question raised by Avni, Glazer, and Larsen in
\cite[Problem~1.12(2)]{AGL24} and the discussion following it.

To the best of our knowledge, prior to the present work, estimates of the form
\eqref{lorentz} for nonlinear
vector-valued polynomial mappings were available only in the case $n=k$. Such an estimate is obtained as a key part of
the proof of \cite[Theorem~4.1]{CRW03} by Carbery, Ricci, and Wright, in
their study of polynomial maximal operators. Namely, for
$f\in\mathcal P_d(\mathbb R^k;\mathbb R^k)$ and every Borel set
$A\subset\mathbb R^k$, they proved
\begin{equation}\label{CRW-est}
\mu_B(f\in A)
\le
C(k,d)
\biggl(\int_B |\det J_f(x)|\,dx\biggr)^{-\frac{1}{k(d-1)+1}}
\bigl(\lambda_k(A)\bigr)^{\frac{1}{k(d-1)+1}},
\end{equation}
where $B\subset\mathbb R^k$ is a Euclidean ball, $\mu_B$ is the normalized
Lebesgue measure on $B$, and $J_f$ denotes the Jacobian matrix of $f$.

\subsection{Algebraic nondegeneracy parameter}

In the scalar case, the small-ball estimate \eqref{small-ball-CW} and its
generalization \eqref{Kos-est} are governed by a simple nondegeneracy
parameter of the polynomial, namely its variance $\operatorname{Var}_\mu(f)$.
The assumption $\operatorname{Var}_\mu(f)>0$ is precisely what ensures that
the image measure $\mu\circ f^{-1}$ is absolutely continuous. In the vector-valued case, one of the main conceptual obstructions
is that the variance has no immediate analogue capable of quantifying
nondegeneracy of the image measure $\mu\circ f^{-1}$.

A natural first candidate is the covariance matrix $\operatorname{Cov}_\mu(f)$
of the vector $f$ with respect to $\mu$.
However, a simple example shows that nondegeneracy of this covariance matrix
is not sufficient to guarantee absolute continuity of $\mu\circ f^{-1}$ for a
general vector-valued polynomial mapping.

\begin{example}\label{example-1}
Consider the mapping $f=(f_1,f_2,f_3)$ on $[-\frac12,\frac12]^2$, equipped
with the uniform measure $\mu$, where
\[
f_1(x,y):=x^2-\tfrac{1}{12},\quad
f_2(x,y):=y^2-\tfrac{1}{12},\quad
f_3(x,y):=xy.
\]
Then $\operatorname{Cov}_\mu(f)$ is nondegenerate, while the image measure
$\mu\circ f^{-1}$ is singular, since
\[
\bigl(f_1+\tfrac{1}{12}\bigr)\bigl(f_2+\tfrac{1}{12}\bigr)-f_3^2=0.
\]
\end{example}

Another natural attempt to quantify nondegeneracy of the image measure
$\mu\circ f^{-1}$, already appearing in \eqref{CRW-est}, is based on the Jacobian matrix $J_f$ of the mapping $f$.
However, such a derivative-based parameter cannot lead to dimension-free
results for arbitrary log-concave measures, even under isotropic normalization
and already in the scalar case.

\begin{example}\label{example-3}
Let $\mu_n$ be the normalized restriction of the Lebesgue measure to the
Euclidean ball
\[
B^n_2:=\bigl\{x\in\mathbb R^n\colon |x|\le \sqrt{n+2}\bigr\}.
\]
Then $\mu_n$ is isotropic.
Let
$g(x):= |x|^2 - n$.
It is readily seen that
\[
\mu_n(|g|\le 1)
\ge
\mu_n\bigl(\sqrt n\le |x|\le \sqrt{n+1}\bigr)
=
\frac{(n+1)^{n/2}-n^{n/2}}{(n+2)^{n/2}}
\ge
(2e)^{-1}
\]
for all $n\ge2$. 
Since $\|\nabla g\|_{L^2(\mu_n)}^2=4n$, we obtain
\[
\|\nabla g\|_{L^2(\mu_n)}^{1/2}\mu_n(|g|\le 1)
\ge
(4n)^{1/4}(2e)^{-1}\to\infty
\quad\text{as}\quad n\to\infty.
\]
Hence, there is no dimension-free analogue of
\eqref{small-ball-CW} with the variance replaced by
$\|\nabla f\|_{L^2(\mu_n)}^2$.
\end{example}

Examples~\ref{example-1} and~\ref{example-3} show that, in the
vector-valued setting, the desired dimension-free results cannot be governed
either by the covariance matrix of $f$ itself or by the Jacobian matrix~$J_f$.
Motivated by these obstructions, we introduce the following new
nondegeneracy parameter, which detects all polynomial relations that can
occur among the components of a mapping
$f\in\mathcal P_d(\mathbb R^n;\mathbb R^k)$ and
quantifies the nondegeneracy of $\mu\circ f^{-1}$.

\begin{definition}[Extended mapping]\label{def-ext}
Let $\mu$ be a Borel probability measure on $\mathbb R^n$, let $d\in\mathbb N$,
and let
$f=(f_1,\ldots,f_k)\colon \mathbb R^n\to \mathbb R^k$ be a measurable mapping
such that
\[
0<\sigma_{f_j}^2:=\operatorname{Var}_\mu(f_j)<\infty
\quad \forall j\in\{1,\ldots,k\}.
\]
Let
\[
\hat f_j
=
\sigma_{f_j}^{-1}
\biggl(f_j-\int_{\mathbb R^n} f_j\,d\mu\biggr)
\quad \forall j\in\{1,\ldots,k\},
\]
and set
$\hat f:=(\hat f_1,\ldots,\hat f_k)$.
Assume also that
\[
\hat f^{\mathbf m}:=\hat f_1^{m_1}\ldots \hat f_k^{m_k}\in L^2(\mu)
\quad \forall \mathbf m=(m_1,\ldots,m_k)\in\mathbb Z_+^k
\quad\text{with}\quad
1\le |\mathbf m|\le d^{k-1}.
\]
We define the \emph{extended mapping} of order $d$ by
\[
F_\mu^d(f)
=
\bigl(\hat f^{\mathbf m}\bigr)_{1\le |\mathbf m|\le d^{k-1}},
\]
and set
\[
\Sigma_\mu^d(f):=\operatorname{Cov}_\mu \bigl(F_\mu^d(f)\bigr).
\]
In other words, $\Sigma_\mu^d(f)$ is the matrix, in the monomial basis, of the quadratic form
\[
Q\mapsto \operatorname{Var}_\mu\bigl(Q(\hat f)\bigr),\quad Q\in\mathcal P_{d^{k-1}}(\mathbb R^k),\quad
Q(0)=0.
\]
\end{definition}

\smallskip

To explain the meaning of this definition, we note that, if the measure
$\mu\circ f^{-1}$ is absolutely continuous, then necessarily
\begin{equation}\label{assumptions}
\operatorname{Var}_{\mu}(f_j)>0
\quad \forall j\in\{1,\ldots,k\}
\quad
\text{and}
\quad
\det \Sigma_\mu^d(f)>0.
\end{equation}
Indeed, otherwise $\mu\circ f^{-1}$ would be concentrated on the zero set of
a nonzero algebraic polynomial in $k$ variables, which has Lebesgue measure
zero, see, for instance, \cite{Mityagin20}.

Conversely, the assumptions \eqref{assumptions} exclude nontrivial polynomial
dependence of the components of $f$ up to degree $d^{k-1}$.
A key point of the present paper is that this is sufficient
in our setting: for every mapping
$f\in\mathcal P_d(\mathbb R^n;\mathbb R^k)$ and every log-concave measure
$\mu$, conditions \eqref{assumptions} guarantee absolute continuity of
$\mu\circ f^{-1}$ and yield the desired dimension-free estimates.
We note that the degree cutoff $d^{k-1}$ arises naturally in this setting
from a Perron-type bound on the degree of an annihilating polynomial for
algebraically dependent polynomials, see
\cite{Ploski86,Ploski05}.

\subsection{The dimension-free set estimate}

We are now in a position to state our first main theorem in detail.

\begin{theorem}\label{th-main-lorentz}
Let $a, \eta>0$, $k,d\in\mathbb N$, and $d\ge2$. There exists
a constant $C:=C(a,\eta,k,d)>0$ such that, for every $n\in\mathbb N$, every
log-concave measure $\mu$ on $\mathbb R^n$, and every polynomial mapping
$
f=(f_1,\ldots,f_k)\in\mathcal P_d(\mathbb R^n;\mathbb R^k)
$
satisfying
\[
\operatorname{Var}_{\mu}(f_j)\ge a
\quad
\forall j\in\{1,\ldots,k\}
\quad
\text{and}
\quad
\det \Sigma_\mu^d(f)\ge \eta,
\]
one has
\[
\mu(f\in A)
\le
C\bigl(\lambda_k(A)\bigr)^{\frac{1}{k(d-1)+1}}
\]
for every Borel set $A\subset\mathbb R^k$.
\end{theorem}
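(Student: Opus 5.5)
We will reduce to a low-dimensional problem, where the Carbery--Ricci--Wright estimate \eqref{CRW-est} applies, and handle the log-concave structure by localization.

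\textbf{Step 1: normalization and compactness.} Without loss of generality the components are normalized, i.e.\ $f=\hat f$ up to translation and scaling. Since $\operatorname{Var}_\mu(f_j)\ge a$, this rescales $\lambda_k(A)$ by a factor controlled by $a$. Upper bounds on the variances are not given. Still, $\det\Sigma^d_\mu(f)\ge\eta$ is a lower bound on the smallest eigenvalue of the quadratic form $Q\mapsto\operatorname{Var}_\mu(Q(\hat f))$. Indeed, the entries of $\Sigma$ are bounded in terms of $k,d$: by log-concave moment comparison (Borell/Bourgain), $\|\hat f^{\mathbf m}\|_{L^2(\mu)}\le C(k,d)$. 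Hence $\operatorname{Var}_\mu(Q(\hat f))\ge c(\eta,k,d)\|Q\|^2$ for every $Q\in\mathcal P_{d^{k-1}}(\mathbb R^k)$ with $Q(0)=0$.

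\textbf{Step 2: dimension reduction.} The composite polynomial $\hat f$ depends on $n$ variables. We want to replace $\mu$ by its restrictions to random affine $k$-planes, or better, use a needle or localization decomposition adapted to $k$-dimensional pieces. The plan: write $\mu=\int \mu_\omega\,d\nu(\omega)$, where each $\mu_\omega$ is a log-concave measure supported on a $k$-dimensional affine subspace $E_\omega$. One natural way is to take $x=y+z$ with $z$ in a random $k$-dimensional subspace and disintegrate. Each $f|_{E_\omega}$ is then a map in $\mathcal P_d(\mathbb R^k;\mathbb R^k)$. On it we apply a log-concave version of \eqref{CRW-est} (proved by the same sublevel set argument, with Lebesgue measure on a ball replaced by a log-concave density on a convex set). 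This gives
\[
\mu_\omega(f\in A)\le C\,J_\omega^{-\frac1{k(d-1)+1}}\lambda_k(A)^{\frac1{k(d-1)+1}},
\]
where $J_\omega$ is an averaged Jacobian determinant quantity on the fiber. Integrating, we need $\int J_\omega^{-\frac1{k(d-1)+1}}\,d\nu<C$. This follows if $\nu(J_\omega\le s)\le Cs^{\beta}$ with $\beta>\frac1{k(d-1)+1}$, which is a Carbery--Wright type small-ball bound \eqref{CW-est} for $J_\omega$. Here $J_\omega$ is itself a polynomial of degree at most $k(d-1)$ in the randomizing variables. Since this tail bound may not hold with a good exponent, we instead apply Hölder, balancing the exponent, or split $A$'s contribution between fibers with small and large $J_\omega$. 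The fibers with small $J_\omega$ are controlled by $\nu(J_\omega\le s)$, with $s$ optimized against $\lambda_k(A)$.

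\textbf{Step 3 (main obstacle): lower bound on the averaged Jacobian.} We must show $\int J_\omega\,d\nu\ge c(a,\eta,k,d)$, or more precisely that $\|\det J_{f|E}\|_{L^2}$ is bounded below dimension-free. Then \eqref{CW-est} plus moment comparison gives the small-ball bound on $J_\omega$. This is where $\Sigma^d_\mu$ enters. If the Jacobian determinant's $L^2$ norm were tiny, then $f_1,\dots,f_k$ would be ``nearly algebraically dependent''. We would prove this quantitatively by a compactness/contradiction argument. The relevant object is a sequence $(\mu_m,f_m)$ in possibly growing dimension, and dimension-freeness is the difficulty. To handle it, we pass to the laws of $(f,\text{relevant derivatives})$, which live in fixed finite dimension, and use that limits of polynomial-of-log-concave laws with bounded moments are again of that type (Gaussian/log-concave chaos limits). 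Then we use the Perron bound (Płoski): algebraically dependent polynomials in $\mathcal P_d$ admit an annihilator of degree $\le d^{k-1}$. A degenerate limit would thus yield $Q$ with $\operatorname{Var}(Q(\hat f))=0$, contradicting Step 1 by $L^2$-continuity of the quadratic form under convergence in law with uniformly integrable moments. I expect making this limiting argument rigorous without dimension dependence to be the hardest part.
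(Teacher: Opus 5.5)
Your plan has a genuine gap, and it sits exactly where the theorem's content lies: nothing in Steps 2--3 removes the dependence on $n$, and the reduction you propose cannot do so. Disintegrating $\mu$ along translates of a fixed or random $k$-plane and bounding each fiber by a Carbery--Ricci--Wright estimate loses powers of $n$ in general. Take $k=1$, $d=2$, $\mu=\gamma_n$ and $f(x)=(|x|^2-n)/\sqrt{2n}$, so that $\operatorname{Var}(f)=1$ and $\Sigma^d_\mu(f)=1$. For every unit vector $u$, the conditional law on each line $y+\mathbb R u$ is standard Gaussian in $s$, and $f(y+su)=(|y|^2-n+s^2)/\sqrt{2n}$. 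Hence each fiber pushforward puts mass of order $n^{1/4}h^{1/2}$ on a suitable interval of length $h\le n^{-1/2}$. Any fiberwise bound $\mu_\omega(f\in A)\le C_\omega\lambda_1(A)^{1/2}$ therefore needs $C_\omega\gtrsim n^{1/4}$, although $f$ is nearly $N(0,1)$. Equivalently, $\|\partial_u f\|_{L^2(\gamma_n)}^2=2/n$ for every $u$. So the lower bound on $J_\omega$, or on $\|\det J_{f|E}\|_{L^2}$, that you aim for in Step~3 is false without $n$-dependence; this is the Gaussian version of Example~\ref{example-3}. Also, an average of $(\det J)^2$ against a fiber-dependent conditional measure is not a polynomial in the fiber parameter, so Carbery--Wright does not apply to $J_\omega$. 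The paper's own fiber argument, Lemma~\ref{lem-lorentz-dimensional}, has the same defect. It is used only with constants depending on $n$, through $\int_{H^\perp}\sup_{y\in H}\varrho(y+z)\,dz$ and the Grassmannian constant $C(k,n)$.

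The missing idea is the Fradelizi--Gu\'edon localization lemma (Theorem~\ref{loc-lem}). Since $\nu\mapsto\int\psi(f)\,d\nu$ is linear, its supremum over log-concave $\nu$ on a compact convex $K$ subject to $p$ constraints $\int u_j\,d\nu\ge0$ is attained at a measure whose support spans at most $p$ dimensions. The paper encodes the hypotheses by $p=p(\eta,k,d)$ such constraints. First, it fixes the means and variances of the $f_j$, so that $F^d_\nu(f)=F^d_\mu(f)$ for every admissible $\nu$. Second, it fixes the means and variances of each $\hat f^{\mathbf m}$, so that $\operatorname{Tr}\Sigma^d_\nu(f)\le C(k,d)$. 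Third, it fixes $\operatorname{Var}_\nu\langle F^d_\mu(f),\xi\rangle$ for $\xi$ in a $\delta$-net of the sphere. Together these give $\lambda_{\min}(\Sigma^d_\nu(f))\ge\eta_0/2$ for every admissible $\nu$. The pieces must therefore be allowed dimension $p$, not $k$, so that $\det\Sigma^d\ge\eta$ survives on them.

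Only then is the dimension-dependent machinery applied, in dimensions between $k$ (by Lemma~\ref{lem-n-less-k}) and $p$. It consists of the fiber/CRW/Carbery--Wright bound in terms of $\int\Delta_f\,d\mu$, together with a compactness argument (Theorem~\ref{th-compactness}, via the Jacobian criterion and P{\l}oski's degree bound) that converts $\det\Sigma^d_\mu(f)\ge\eta$ into $\int\Delta_{\hat f}\,d\mu\ge\alpha_{n,d,k}(\eta)$. That compactness is legitimate because $n$ is fixed, so isotropic log-concave measures and $L^2$-normalized polynomials of degree $d$ on $\mathbb R^n$ form a compact family. Your compactness in growing dimension via limiting laws is left unproved, and it is unclear how a Jacobian criterion would even be formulated for a limiting law.
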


\begin{remark}
In Theorem~\ref{th-main-lorentz}, one may take the constant in the form
\[
C(a,\eta,k,d)
=
C'(\eta,k,d)a^{-\frac{k}{2(k(d-1)+1)}}.
\]
\end{remark}

\medskip

In particular, the theorem gives a criterion for the absolute continuity of
$\mu\circ f^{-1}$ in the case of log-concave measures $\mu$ and mappings
$f\in \mathcal P_d(\mathbb R^n;\mathbb R^k)$. Namely, $\mu\circ f^{-1}$ is
absolutely continuous if and only if the conditions in \eqref{assumptions} are
satisfied. Moreover, applying the estimate of
Theorem~\ref{th-main-lorentz} to Euclidean balls gives the following
multidimensional counterpart of the Carbery--Wright inequality:
\[
L(\mu\circ f^{-1},t)
\le
C_3(a,\eta,k,d)\, t^{\frac{1}{d-1+\frac{1}{k}}}
\quad \forall t>0.
\]

The exponent $\frac{1}{k(d-1)+1}$ in Theorem~\ref{th-main-lorentz} is optimal for every $k$ and $d$, already for the uniform measure on $[0,1]^k$, as shown by the following example.

\begin{example}\label{example-2}
Consider the mapping $f\colon \mathbb R^k\to\mathbb R^k$ defined by
\[
f(x):=(x_1^d,x_1^{d-1}x_2,\ldots,x_1^{d-1}x_k).
\]

Let $\mu$ be the restriction of the Lebesgue measure on $[0,1]^k$. For $s\in(0,1)$, set
\[
A_s
:=
\bigl\{0\le y_1\le s^d,\ 0\le y_j\le y_1^{\frac{d-1}{d}},\ j=2,\ldots,k\bigr\}.
\]
Then
\[
B_s
:=
\{x\in[0,1]^k\colon f(x)\in A_s\}
=
\{0\le x_1\le s,\ x_2,\ldots,x_k\in[0,1]\}.
\]
Hence
$\mu(B_s)=s$
and
\[
\lambda_k(A_s)
=
\int_0^{s^d}y_1^{\frac{(k-1)(d-1)}{d}}\,dy_1
=
\frac{d}{k(d-1)+1}s^{k(d-1)+1}.
\]
Therefore, if an estimate of the form
\[
\mu(f\in A)\le C\bigl(\lambda_k(A)\bigr)^\alpha
\]
held for all Borel sets $A\subset\mathbb R^k$, then applying it to $A_s$ would give
\[
s
\le
C\bigl(\tfrac{d}{k(d-1)+1}\bigr)^\alpha
s^{\alpha(k(d-1)+1)}
\quad \forall s\in(0,1).
\]
Letting $s\downarrow0$, we necessarily obtain
\[
\alpha\le \tfrac{1}{k(d-1)+1}.
\]
Finally, for this mapping and this measure, the nondegeneracy assumptions of
Theorem~\ref{th-main-lorentz} are satisfied with some positive constants,
since $f([0,1]^k)$ has nonempty interior.
\end{example}

\subsection{Lorentz estimates for the density}

The estimate \eqref{lorentz} implies that $\mu\circ f^{-1}$ is absolutely
continuous and, for $0<\alpha<1$, that its density $\varrho_f$ belongs to the
weak Lorentz space $L^{p(\alpha),\infty}(\mathbb R^k)$, where
$p(\alpha)=\frac{1}{1-\alpha}$.
Moreover, for the standard weak-Lorentz quasi-norm
\[
\|\varrho_f\|_{L^{p,\infty}(\mathbb R^k)}
:=
\sup_{s>0}
s\,\bigl(\lambda_k(\varrho_f\ge s)\bigr)^{1/p},
\]
the best possible constant $C$ in \eqref{lorentz} is equivalent to
$\|\varrho_f\|_{L^{p(\alpha),\infty}(\mathbb R^k)}$ up to a constant depending
only on $\alpha$.
Therefore, Theorem~\ref{th-main-lorentz} implies that, under its assumptions,
the density of the measure $\mu\circ f^{-1}$ belongs to
$L^{\frac{k(d-1)+1}{k(d-1)},\infty}(\mathbb R^k)$.
Since this density is a probability density, it also belongs to
$L^q(\mathbb R^k)$ for every
$1<q<\frac{k(d-1)+1}{k(d-1)}$.

\subsection{Nikolskii--Besov regularity}

In the scalar case $f\in\mathcal P_d(\mathbb R^n)$, the sharp set estimate
\eqref{Kos-est} follows from a stronger Nikolskii--Besov regularity estimate
for the corresponding density~$\varrho_f$. Namely, it was proved in
\cite{Kos25,Kos18} that
\begin{equation}\label{eq-scalar-reg}
	\|\varrho_f\|_{\dot{B}^{1/d}_{1,\infty}(\mathbb R)}
	\le
	Cd(\operatorname{Var}_\mu(f))^{-\frac{1}{2d}}
\end{equation}
provided that $\operatorname{Var}_\mu(f)>0$. Here
$B^\alpha_{1,\infty}(\mathbb R^k)$, $\alpha\in(0,1)$, denotes the
Nikolskii--Besov space, see \cite{BIN,Stein}.

In dimension one, the standard Besov-to-Lorentz embedding,
see \cite[Corollary~4.20]{BennettSharpley88}, gives
\[
\|\varrho_f\|_{L^{p(\alpha),\infty}(\mathbb R)}
\le
C(\alpha)
\|\varrho_f\|_{\dot B^\alpha_{1,\infty}(\mathbb R)},
\quad
p(\alpha)=\frac{1}{1-\alpha}.
\]
Consequently,
\[
\mu(f\in A)
\le
C(\alpha)
\|\varrho_f\|_{\dot B^\alpha_{1,\infty}(\mathbb R)}
\bigl(\lambda_1(A)\bigr)^\alpha
\]
for every Borel set $A\subset\mathbb R$, and, in particular,
\[
L(\mu\circ f^{-1},t)
\le
C(\alpha)
\|\varrho_f\|_{\dot B^\alpha_{1,\infty}(\mathbb R)}
t^\alpha.
\]
Thus, in the scalar case, Nikolskii--Besov regularity implies Lorentz-space
and small-ball estimates of the same order, allowing one to study these
problems simultaneously. In addition, this regularity is connected with
problems in harmonic analysis concerning oscillatory integrals with polynomial
phases, since Nikolskii--Besov regularity of order $\alpha$ yields decay
estimates of the same order for such integrals. In particular, the estimate \eqref{eq-scalar-reg} settled the
Carbery--Wright conjecture \cite[Section~6]{CW01} on the sharp decay rate of
oscillatory integrals with polynomial phases over convex sets.

In the vector-valued case, the direct connection between Nikolskii--Besov
fractional regularity and estimates of the form \eqref{lorentz} breaks down.
Indeed, even in the scalar case, a general Nikolskii--Besov regularity theorem
for image measures generated by polynomial mappings of degree at most $d$
cannot have regularity order $\alpha$ larger than $1/d$.
Moreover, for densities on $\mathbb R^k$, the Besov-to-Lorentz embedding gives
only
\[
B^\alpha_{1,\infty}(\mathbb R^k)
\subset
L^{p(\alpha/k),\infty}(\mathbb R^k),
\quad
p(\alpha/k)=\frac{1}{1-\alpha/k}=\frac{k}{k-\alpha}.
\]
Equivalently, such a regularity estimate yields only a set estimate of the
form
\[
\mu(f\in A)
\le
C\bigl(\lambda_k(A)\bigr)^{\alpha/k}.
\]
Consequently, this route can give an estimate of the form \eqref{lorentz}
with exponent at most $\frac{1}{kd}$. This is weaker than the exponent
$\frac{1}{k(d-1)+1}$ obtained in Theorem~\ref{th-main-lorentz}. Thus, in the
vector-valued case, small-ball bounds, Lorentz-space estimates, and fractional
regularity estimates become genuinely distinct problems.

\subsection{The dimension-free fractional regularity estimate}

For arbitrary log-concave measures $\mu$ and vector-valued polynomial mappings
$f\in\mathcal P_d(\mathbb R^n;\mathbb R^k)$, we prove that, whenever
$\mu\circ f^{-1}$ is absolutely continuous, its density belongs to
$B_{1,\infty}^{\frac{1}{k(d-1)+1}}(\mathbb R^k)$.
To formulate this result explicitly, we use the following modulus of
continuity. For a finite Borel measure $\nu$ on $\mathbb R^k$, set
\[
\sigma(\nu,t):=
\sup\Bigl\{
\int_{\mathbb R^k}\partial_\theta\varphi\,d\nu\colon
|\theta|=1,\,
\varphi\in C_b^\infty(\mathbb R^k),\,
\|\varphi\|_\infty\le t,\,
\|\partial_\theta\varphi\|_\infty\le 1
\Bigr\}.
\]
In particular, we use an equivalent definition of the Nikolskii--Besov
seminorm based on this modulus of continuity, see \cite[Theorem~2.1]{Kos-MS} and \cite{Kos-FCAA}. Namely, for
$\varrho\in L^1(\mathbb R^k)$ and $\alpha\in(0,1)$,
\[
\|\varrho\|_{\dot B^\alpha_{1,\infty}(\mathbb R^k)}
:=
\sup_{t>0}
t^{-\alpha}\sigma(\varrho\,dx,t).
\]

We can now state our second main result in full detail.

\begin{theorem}\label{th-main-besov}
Let $0<a\le b<\infty$, $\eta>0$, $k,d\in\mathbb N$, and $d\ge2$. There exists
a constant $C:=C(a,b,\eta,k,d)>0$ such that, for every $n\in\mathbb N$, every
log-concave measure $\mu$ on $\mathbb R^n$, and every
$
f=(f_1,\ldots,f_k)\in\mathcal P_d(\mathbb R^n;\mathbb R^k)
$
satisfying
\[
a\le \operatorname{Var}_{\mu}(f_j)\le b
\quad
\forall j\in\{1,\ldots,k\}
\quad
\text{and}
\quad
\det \Sigma_\mu^d(f)\ge \eta,
\]
one has
\[
\sigma(\mu\circ f^{-1},t)
\le
C t^{\frac{1}{k(d-1)+1}}
\quad
\forall t>0.
\]
\end{theorem}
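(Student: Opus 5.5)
The plan is to test the pushforward against $\partial_\theta\varphi$ and convert this into a derivative of $\varphi\circ f$ along a polynomial vector field on $\mathbb R^n$, using a $k\times k$ minor of the Jacobian. Since $\sigma$, log-concavity and the degree bound are invariant under affine changes of variables in $\mathbb R^n$, I first put $\mu$ in isotropic position. For a $k$-frame $V=(v_1,\dots,v_k)$ set $M_V:=J_fV$, a $k\times k$ polynomial matrix with entries of degree $\le d-1$, and $D_V:=\det M_V$, a polynomial of degree $\le k(d-1)$. The chain rule gives, with $w_V:=V\operatorname{adj}(M_V)\theta$,
\[
D_V\,(\partial_\theta\varphi)(f)=\partial_{w_V}(\varphi\circ f).
\]
Here $w_V$ is a polynomial vector field of degree $\le (k-1)(d-1)$. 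Using a degree $k(d-1)$ determinant, rather than the Gram determinant $\det(J_fJ_f^{T})$ of degree $2k(d-1)$, is what should produce the sharp exponent.

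Fix $\varepsilon>0$ and a smooth cutoff $\psi$ on $\mathbb R$ with $\psi=0$ on $[-1,1]$ and $\psi=1$ outside $[-2,2]$, and let $h(s):=\psi(s)/s$. Then
\[
\int(\partial_\theta\varphi)(f)\,d\mu
=\int(\partial_\theta\varphi)(f)\bigl(1-\psi(D_V/\varepsilon)\bigr)\,d\mu
+\varepsilon^{-1}\int\partial_{w_V}(\varphi\circ f)\,h(D_V/\varepsilon)\,d\mu .
\]
The first term is bounded by $\mu(|D_V|\le 2\varepsilon)$. By the Carbery--Wright inequality \eqref{CW-est} this is at most $Ck(d-1)\,\varepsilon^{1/(k(d-1))}\|D_V\|_{L^2(\mu)}^{-1/(k(d-1))}$.

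For the second term I integrate by parts in the direction field $w_V$. The function $\varphi\circ f$ is bounded by $t$, and the derivative either falls on $h(D_V/\varepsilon)$, costing $\varepsilon^{-1}|\partial_{w_V}D_V|$ on the set $\{|D_V|\sim\varepsilon\}$, on $w_V$ itself, or on the log-concave density. The first two contributions are controlled by Bobkov-type reverse H\"older inequalities for polynomials under log-concave measures together with Markov/Nazarov--Sodin--Volberg derivative bounds. The density contribution is handled by integrating on lines parallel to each $v_j$: along a line a log-concave density is unimodal, so for a bounded polynomial weight its contribution is controlled by the total variation of the one-dimensional density, which is dimension-free in isotropic position. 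The outcome is that the second term is at most $C\,t\,\varepsilon^{-1}$, with $C$ depending on $k$, $d$, $b$ (through upper bounds on the coefficients, where $\operatorname{Var}_\mu(f_j)\le b$ is used) and on $\|D_V\|_{L^2}^{-1}$. Choosing $\varepsilon=t^{k(d-1)/(k(d-1)+1)}$ balances the two terms and gives $\sigma(\mu\circ f^{-1},t)\le Ct^{1/(k(d-1)+1)}$.

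The main obstacle is a dimension-free lower bound $\|D_V\|_{L^2(\mu)}\ge c(a,b,\eta,k,d)>0$ for a suitable frame $V$, chosen together with dimension-free upper bounds for the derivative terms. Example~\ref{example-3} warns that derivatives of $f$ can be large in high dimension, so the frame cannot be arbitrary. I would first try averaging over $V$ chosen as random orthonormal $k$-frames, or better, frames spanned by gradients of the components, such as $v_j=\int\nabla f_j\,d\mu$ after Gram--Schmidt. I would then prove the lower bound by contradiction and compactness. If $\|D_V\|$ degenerates along a sequence $(\mu_m,f_m)$ with bounded variances, then the joint laws of $\hat f_m$ together with the relevant directional derivatives are tight, by polynomial moment comparison under log-concave measures. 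A limit in which $D_V\equiv0$ forces the limiting components to be functionally dependent on a set of positive measure, hence algebraically dependent. By the Perron-type bound \cite{Ploski86,Ploski05} the annihilating polynomial has degree $\le d^{k-1}$, which is exactly what makes $\det\Sigma^d_\mu(f)\to0$ in the limit, contradicting $\det\Sigma^d_\mu(f)\ge\eta$. Making this limit argument genuinely dimension-free, for instance by first reducing to conditional log-concave measures on random affine subspaces of dimension bounded in terms of $k$ and $d$, is where I expect most of the work.
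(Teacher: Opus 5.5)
Your local mechanism is the same as the paper's: cut off where the degree-$k(d-1)$ determinant $D_V$ is small, integrate by parts along the adjugate field, and apply Carbery--Wright to $D_V$ (compare Lemmas~\ref{key-lemma-1}--\ref{key-lemma-2} and Lemma~\ref{lem-dimensional}). The gap is in the route to dimension-freeness. The bound you name as the main obstacle, $\|D_V\|_{L^2(\mu)}\ge c(a,b,\eta,k,d)$ for a suitable constant frame, is false.

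Take $k=1$, $d=2$, $\mu=\gamma_n$ and $f(x)=(|x|^2-n)/\sqrt{2n}$. Then $\operatorname{Var}_\mu f=1$ and $\det\Sigma^2_\mu(f)=1$, since for $k=1$ the extended map is just $\hat f$. Yet $\|\partial_v f\|_{L^2(\gamma_n)}=\sqrt{2/n}$ for every unit $v$. Your scheme is invariant under $(V,\varepsilon)\mapsto(\lambda V,\lambda\varepsilon)$, so rescaling the frame does not help. Here $\mu(|\partial_v f|\le 2\varepsilon)\approx\varepsilon\sqrt n$, and balancing against $t\varepsilon^{-1}$ gives at best $(t\sqrt n)^{1/2}$.

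So integrating by parts along a fixed $k$-dimensional direction in $\mathbb R^n$ is intrinsically dimension-dependent: along a line $f$ moves only by $O(n^{-1/2})$. Your compactness argument breaks for the same reason. With $n_m\to\infty$ there is no limiting polynomial map on a fixed space to which the Jacobian criterion and Perron's bound could be applied, and the limit law of $(\hat f_m,\text{derivatives})$ forgets the Jacobian structure. Passing to conditional measures of $\mu$ on random low-dimensional affine subspaces does not help either, because they need not inherit $\det\Sigma^d\ge\eta$; in the example, $f$ is nearly constant on random lines.

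The missing idea is the Fradelizi--Gu\'edon localization (Theorem~\ref{loc-lem}). The paper accepts dimension-dependent constants.
\begin{itemize}
\item For isotropic $\mu$ on $\mathbb R^n$ it proves the estimate with $C(k,d,n)$ by averaging $(\det J_f^H)^2$ over $Gr(k,n)$, which gives $C(k,n)\int\Delta_f\,d\mu$.
\item In fixed dimension it shows by compactness (essentially your argument, valid there because the coefficients of $\hat f_j$ stay bounded) that $\det\Sigma^d_\mu(f)\ge\eta$ implies $\int\Delta_{\hat f}\,d\mu\ge\alpha_{n,d,k}(\eta)$.
\item It then encodes the means and variances of the $f_j$, of all monomials $\hat f^{\mathbf m}$, and of $\langle F^d_\mu(f),\xi\rangle$ for $\xi$ in a $\delta$-net of the sphere as $p(\eta,k,d)$ integral constraints. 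The net ensures $\det\Sigma^d_\nu(f)\ge C(\eta,k,d)$ for every log-concave $\nu$ satisfying them.
\item Since $\nu\mapsto|\int(\partial_\theta\varphi)(f)\,d\nu|$ is convex and continuous, its supremum over this class is attained at a $\nu$ whose support spans an affine subspace of dimension between $k$ (by Lemma~\ref{lem-n-less-k}) and $p$. The dimension-dependent bound therefore suffices.
\end{itemize}

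There is also a secondary gap in the local estimate. The term where the derivative falls on $h(D_V/\varepsilon)$ is not confined to $\{|D_V|\sim\varepsilon\}$, since $h(s)=1/s$ for $|s|\ge2$. Reverse H\"older and Markov bounds plus Carbery--Wright give at best about $t\varepsilon^{-2+1/(k(d-1))}$ for it, which ruins the exponent once $k(d-1)\ge2$. The paper obtains $t\varepsilon^{-1}$ by working on lines. There $D$ has at most $kd$ monotonicity intervals, so $\int|\partial_{x_j}D|/(D^2+\varepsilon^2)\,dx_j\le\pi kd\,\varepsilon^{-1}$ by the arctan bound. The term where the derivative falls on $w_V$ simply vanishes by the Piola identity.
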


\begin{remark}
In Theorem~\ref{th-main-besov}, one may take the constant in the form
\[
C(a,b,\eta,k,d)
=
C'(\eta,k,d)
\bigl(a^{-k}b^{k-1}\bigr)^{\frac{1}{2(k(d-1)+1)}}.
\]
\end{remark}

\medskip

In particular, for $k=1$, we recover the dimension-free scalar
Nikolskii--Besov regularity estimate \eqref{eq-scalar-reg}, up to replacing
the factor $Cd$ by a $d$-dependent constant $C(d)$.

In the vector-valued setting, dimension-free regularity results of this type
were previously known only in the Gaussian case. Namely, for the standard
Gaussian measure $\gamma_n$ on $\mathbb R^n$, it was proved in \cite{KosZh}
that, for every $\alpha\in(0,\frac{1}{2k(d-1)})$,
\begin{equation}\label{Gauss-reg}
\sigma(\gamma_n\circ f^{-1},t)
\le
C(a,b,k,d,\alpha)\, t^\alpha
\quad \forall t>0,
\end{equation}
provided that
\[
\int_{\mathbb R^n}
\Delta_f
\, d\gamma_n\ge a
\quad\text{and}\quad
\max_{1\le j\le k}\operatorname{Var}_{\gamma_n}(f_j)\le b,
\]
where $\Delta_f:=\det(J_fJ_f^*)$.
See also \cite{BKZ} for earlier results in this setting. Thus, even in the
Gaussian case, Theorem~\ref{th-main-besov} gives a sharper regularity exponent than the previously known bound
\eqref{Gauss-reg}.

\subsection{Strategy of the proof}

To prove Theorems~\ref{th-main-lorentz} and~\ref{th-main-besov}, we first
choose a $k$-dimensional subspace $H\in Gr(k,n)$ and fix all variables in
$H^\perp$. We are then reduced to the simpler setting of a polynomial
mapping from $\mathbb R^k$ to $\mathbb R^k$, which allows us to obtain the
corresponding Lorentz-type and Besov-type estimates in terms of the
restricted Jacobian matrix involving only derivatives along $H$.

We next average these estimates over the Grassmannian $Gr(k,n)$. This yields
dimension-dependent analogues of Theorems~\ref{th-main-lorentz} and
\ref{th-main-besov}, with nondegeneracy measured by
$\det(J_fJ_f^*)$. For each fixed dimension $n$, we then show, by a
compactness argument, that the derivative-based nondegeneracy condition is
equivalent to the algebraic nondegeneracy condition expressed in terms of
the covariance matrix $\Sigma_\mu^d(f)$ of the extended mapping. This
transition is essential, as it allows us to apply the localization lemma of
Fradelizi and Gu\'edon~\cite{FrGue}, thereby removing the dependence on the
ambient dimension.

\subsection{Applications in probability}

Limit theorems and other properties of random vectors with polynomial
components have been extensively studied in the Gaussian setting, see, for
instance, \cite{PT04,NP05,NP09,NP12,HLN14,HMP}. In this setting, polynomial
components are precisely finite sums of elements of Wiener chaoses of bounded
order.
The main results of the present paper allow us to extend some of these results
to the setting of general log-concave measures, while also yielding new and
sharper estimates even in the Gaussian case.

For $d,k\in\mathbb N$, let $\mathcal P_{d,k}^{\rm lc}$ denote the class of all
$\mathbb R^k$-valued random vectors, considered up to equality in distribution,
that are limits in distribution of random vectors $X$ of the form
\[
X=f(\xi_1,\ldots,\xi_n),
\]
where $n\in\mathbb N$ is arbitrary,
$f\in\mathcal P_d(\mathbb R^n;\mathbb R^k)$, and
$\xi=(\xi_1,\ldots,\xi_n)$ is a random vector with a log-concave distribution.
Since finite-dimensional Gaussian distributions are log-concave, the class
$\mathcal P_{d,k}^{\rm lc}$ contains, in particular, all random vectors on an
abstract Wiener space whose components belong to finite sums of Wiener chaoses
of orders at most $d$. 

Let $\mathcal P_d^{\rm lc}:=\mathcal P_{d,1}^{\rm lc}$.
It can be verified that every random variable from $\mathcal P_d^{\rm lc}$
has finite moments of all orders. 
The variance $\operatorname{Var}$ and the covariance matrix $\Sigma^d$ of the extended random
vector are defined analogously to $\operatorname{Var}_\mu$ and
$\Sigma_\mu^d$, respectively, with integration with respect to $\mu$ replaced
by expectation.

Passing to the limit in Theorems~\ref{th-main-lorentz} and
\ref{th-main-besov} gives the following extension.

\begin{corollary}\label{cor-main}
Let $a,b\in(0,+\infty)$, $\eta>0$, $k,d\in\mathbb N$, and $d\ge2$. There exist
constants $C_1:=C_1(a,\eta,k,d)>0$ and $C_2:=C_2(a,b,\eta,k,d)>0$ such that,
for every random vector
$X=(X_1,\ldots,X_k)\in\mathcal P_{d,k}^{\rm lc}$
satisfying
\[
a\le \operatorname{Var}(X_j)\le b
\quad
\forall j\in\{1,\ldots,k\}
\quad
\text{and}
\quad
\det \Sigma^d(X)\ge \eta,
\]
one has
\[
\mathbb P(X\in A)
\le
C_1\bigl(\lambda_k(A)\bigr)^{\frac{1}{k(d-1)+1}}
\]
for every Borel set $A\subset\mathbb R^k$, and
\[
\sigma(\mathbb P\circ X^{-1},t)
\le
C_2t^{\frac{1}{k(d-1)+1}}
\quad
\forall t>0.
\]
\end{corollary}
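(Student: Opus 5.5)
The corollary should follow from Theorems~\ref{th-main-lorentz} and~\ref{th-main-besov} by a limiting argument. Let $X\in\mathcal P_{d,k}^{\rm lc}$ satisfy the hypotheses. Take a sequence $X^{(m)}=f^{(m)}(\xi^{(m)})$ converging to $X$ in distribution, where $f^{(m)}\in\mathcal P_d(\mathbb R^{n_m};\mathbb R^k)$ and $\xi^{(m)}$ has a log-concave law $\mu_m$.

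\textbf{Step 1: moments converge.} I will show that all moments of $X^{(m)}$ converge to those of $X$. By the Carbery--Wright/Bourgain moment comparison for polynomials of degree $d$ under log-concave measures, $\|g\|_{L^p(\mu)}\le C(p,d)\|g\|_{L^2(\mu)}$ for every $g\in\mathcal P_d$.

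First I bound $\sup_m\mathbb E|X^{(m)}|^2$. Suppose instead $s_m^2:=\mathbb E|X^{(m)}|^2\to\infty$ along a subsequence. The normalized vectors $X^{(m)}/s_m$ have unit second moment, so by the comparison above they are uniformly bounded in every $L^p$. Moreover $X^{(m)}/s_m\to0$ in distribution, since $X^{(m)}$ is tight. Uniform integrability then forces the second moment of $X^{(m)}/s_m$ to tend to $0$, contradicting that it equals $1$.

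Hence all moments of $X^{(m)}$ are uniformly bounded, so every power is uniformly integrable. Therefore $\mathbb E\,P(X^{(m)})\to\mathbb E\,P(X)$ for every polynomial $P$ on $\mathbb R^k$.

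\textbf{Step 2: the hypotheses pass to the approximants.} By Step 1, $\operatorname{Var}(X^{(m)}_j)\to\operatorname{Var}(X_j)$. The entries of $\Sigma^d(X^{(m)})$ are polynomial expressions in finitely many moments of $X^{(m)}$, divided by powers of $\sigma_{X_j^{(m)}}$. So $\Sigma^d(X^{(m)})\to\Sigma^d(X)$ entrywise.

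Consequently, for large $m$ we have $a/2\le\operatorname{Var}(X^{(m)}_j)\le 2b$ and $\det\Sigma^d(X^{(m)})\ge\eta/2$. Both theorems then apply to $f^{(m)}$ and $\mu_m$, with constants $C(a/2,\eta/2,k,d)$ and $C(a/2,2b,\eta/2,k,d)$ respectively.

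\textbf{Step 3: pass to the limit.} For the Besov bound, fix $\theta$ and $\varphi$ admissible in the definition of $\sigma$. Then $\partial_\theta\varphi$ is bounded and continuous, so
\[
\int\partial_\theta\varphi\,d\mathbb P_{X^{(m)}}\to\int\partial_\theta\varphi\,d\mathbb P_X .
\]
Each approximant satisfies $\int\partial_\theta\varphi\,d\mathbb P_{X^{(m)}}\le Ct^{1/(k(d-1)+1)}$, so the limit does too. Taking the supremum over $\theta,\varphi$ gives the estimate for $\sigma(\mathbb P\circ X^{-1},t)$.

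For the set estimate, first take an open set $U$. The portmanteau theorem gives
\[
\mathbb P(X\in U)\le\liminf_m\mathbb P(X^{(m)}\in U)\le C\lambda_k(U)^{1/(k(d-1)+1)} .
\]
For a general Borel set $A$, outer regularity of $\lambda_k$ gives open sets $U\supset A$ with $\lambda_k(U)$ arbitrarily close to $\lambda_k(A)$. Since $\mathbb P(X\in A)\le\mathbb P(X\in U)$, the estimate extends to $A$.

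\textbf{Main obstacle.} The key step is Step 1, the uniform moment bounds from convergence in distribution. This is where the dimension-free moment comparison for log-concave measures is essential. The remaining steps are routine.
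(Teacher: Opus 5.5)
Your argument is correct and has the same skeleton as the paper's proof. You approximate $X$ by $f_m(\xi_m)$, obtain uniform moment bounds, and deduce convergence of $\operatorname{Var}$ and $\Sigma^d$. This lets Theorems~\ref{th-main-lorentz} and~\ref{th-main-besov} apply with $a/2$, $2b$, $\eta/2$ for large $m$, and you then pass to the limit.

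The differences are in two sub-steps. For the uniform second-moment bound, the paper (Lemma~\ref{lem-moments-lc-closure}) uses tightness to get $\mathbb P(|X_{m,j}|<R)\ge 3/4$. It then applies the Carbery--Wright small-ball inequality \eqref{CW-est}, which bounds the second moment explicitly in terms of $R$. You instead argue by contradiction after normalizing by $s_m$, using only dimension-free moment comparison to get uniform integrability of $|X^{(m)}/s_m|^2$; this is in effect a Paley--Zygmund anti-concentration argument. Your route needs less (moment equivalence rather than the sharp small-ball estimate) but gives no explicit bound, which is harmless here. The paper's lemma is also formulated for approximating sequences taken from the closure class $\mathcal P_{d,k}^{\rm lc}$ itself. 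It reuses that generality in Corollary~\ref{cor-converg}, but the present corollary does not need it.

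For the set estimate, the paper passes the smooth-test-function inequality $\mathbb E\,\psi(X_m)\le C_1\|\psi\|_{L^1(\mathbb R^k)}^{1/(k(d-1)+1)}$ to the limit and then invokes Lemma~\ref{lem-lorentz-equivalence}. Your portmanteau bound on open sets, combined with outer regularity of $\lambda_k$, reaches Borel sets directly and bypasses that lemma.
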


In particular, Corollary~\ref{cor-main} extends the classical result of
Kusuoka \cite{Kusuoka83} beyond the Gaussian setting. Namely, for
$X=(X_1,\ldots,X_k)\in\mathcal P_{d,k}^{\rm lc}$, the distribution
$\mathbb P\circ X^{-1}$ is absolutely continuous if and only if there is no
nonzero polynomial $Q\in\mathcal P_{d^{k-1}}(\mathbb R^k)$ such that
\[
Q(X_1,\ldots,X_k)=0
\quad \text{almost surely}.
\]

Another implication of Corollary~\ref{cor-main} concerns the comparison
between weak and strong convergence for distributions of random vectors from
$\mathcal P_{d,k}^{\rm lc}$. For two $\mathbb R^k$-valued random vectors
$X$ and $Y$, we define the total variation distance between their
distributions by
\[
d_{\mathrm{TV}}(X,Y):=
\sup\Bigl\{
\bigl|\mathbb E[\varphi(X)]-\mathbb E[\varphi(Y)]\bigr| \colon
\varphi\in C_0^\infty(\mathbb R^k),\ \|\varphi\|_\infty\le 1
\Bigr\}
\]
and the bounded Kantorovich--Rubinstein, or Fortet--Mourier, distance by
\[
d_{\mathrm{KR}}(X,Y):=
\sup\Bigl\{
\bigl|\mathbb E[\varphi(X)]-\mathbb E[\varphi(Y)]\bigr| \colon
\varphi\in C_0^\infty(\mathbb R^k),\ \|\varphi\|_\infty\le 1,\ 
\|\nabla\varphi\|_\infty\le 1
\Bigr\}.
\]
The former distance coincides with the $L^1(\mathbb R^k)$ distance between
the densities whenever such densities exist. 
The latter distance metrizes convergence in distribution of the corresponding
random vectors. It is bounded from above by the usual Kantorovich distance,
also called the $1$-Wasserstein distance.

Nourdin and Poly \cite{NP} proved in the Gaussian case that if a
sequence $X_n$ from the finite sum of Wiener chaoses of orders at most $d$
converges in distribution to a nonconstant random variable $X_\infty$, then
the convergence holds in total variation and
\[
d_{\rm TV}(X_n,X_\infty)
\le
C d_{\rm KR}(X_n,X_\infty)^{\frac{1}{2d+1}}
\]
for some constant $C>0$.
The exponent was later improved in \cite{Kos18} to
$\frac{1}{d+1}$, and the result was extended from the Gaussian setting to the
setting of arbitrary log-concave measures.

In the vector-valued case, for a sequence $X_n$ of random vectors with
components from finite sums of Wiener chaoses of orders at most $d$ converging
in distribution to a random vector $X_\infty$ and satisfying the additional
assumption
\[
\mathbb E[\Delta_{X_n}]\ge a>0,
\]
where $\Delta_{X_n}$ is the determinant of the corresponding Malliavin matrix,
it was proved in \cite{NNP} that
\[
d_{\rm TV}(X_n,X_\infty)
\le
C d_{\rm KR}(X_n,X_\infty)^\alpha
\]
for every
\[
\alpha<
\alpha_0=
\frac{1}{(k+1)(4k(d-1)+3)+1}
\]
with some constant $C>0$. The exponent $\alpha_0$ was later improved in
\cite{BKZ} to
$\alpha_0=\frac{1}{4k(d-1)+1}$
and recently in \cite{KosZh} to
$\alpha_0=\frac{1}{2k(d-1)+1}$.
Corollary~\ref{cor-main} allows us to improve the exponent further, extend the
result to the log-concave setting, and drop the uniform Malliavin determinant
assumption, thereby obtaining the full analogue of the scalar Nourdin--Poly
result.

\begin{corollary}\label{cor-converg}
Let $a,b\in(0,+\infty)$, $\eta>0$, $k,d\in\mathbb N$, and $d\ge2$. There exists
a constant $C:=C(a,b,\eta,k,d)>0$ such that, for every
$X=(X_1,\ldots,X_k),Y=(Y_1,\ldots,Y_k)\in\mathcal P_{d,k}^{\rm lc}$
satisfying
\[
a\le \operatorname{Var}(X_j)\le b,
\quad
a\le \operatorname{Var}(Y_j)\le b
\quad
\forall j\in\{1,\ldots,k\},
\]
and
\[
\det \Sigma^d(X)\ge \eta,
\quad
\det \Sigma^d(Y)\ge \eta,
\]
one has
\[
d_{\rm TV}(X,Y)
\le
C d_{\rm KR}(X,Y)^{\frac{1}{k(d-1)+2}}.
\]

In particular, if a sequence
$X_n\in\mathcal P_{d,k}^{\rm lc}$ converges in distribution to an
$\mathbb R^k$-valued random vector $X_\infty$ whose distribution is absolutely
continuous, then the convergence holds in total variation and
\[
d_{\rm TV}(X_n,X_\infty)
\le
C d_{\rm KR}(X_n,X_\infty)^{\frac{1}{k(d-1)+2}}
\]
for some constant $C>0$ depending on the sequence and on the limiting
distribution.
\end{corollary}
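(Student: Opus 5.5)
The plan is to derive Corollary~\ref{cor-converg} from the Besov estimate of Corollary~\ref{cor-main} by the standard smoothing argument. Fix $\varphi\in C_0^\infty(\mathbb R^k)$ with $\|\varphi\|_\infty\le1$, and mollify it: $\varphi_\varepsilon=\varphi*\omega_\varepsilon$, where $\omega_\varepsilon$ is a smooth probability density supported in $B_\varepsilon(0)$. Then
\[
|\mathbb E\varphi(X)-\mathbb E\varphi(Y)|\le |\mathbb E(\varphi-\varphi_\varepsilon)(X)|+|\mathbb E(\varphi-\varphi_\varepsilon)(Y)|+|\mathbb E\varphi_\varepsilon(X)-\mathbb E\varphi_\varepsilon(Y)|.
\]
\emph{Third term.} We have $\|\varphi_\varepsilon\|_\infty\le1$ and $\|\nabla\varphi_\varepsilon\|_\infty\le C\varepsilon^{-1}$. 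Applying the definition of $d_{\rm KR}$ to $\varphi_\varepsilon/\max(1,C\varepsilon^{-1})$ bounds this term by $C\varepsilon^{-1}d_{\rm KR}(X,Y)$ for $\varepsilon\le1$.

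\emph{First two terms.} Set $\alpha=\frac1{k(d-1)+1}$. We need $|\int(\varphi-\varphi*\omega_\varepsilon)\,d\nu|\le C\varepsilon^\alpha$, where $\nu$ is the law of $X$ (or of $Y$), using only $\sigma(\nu,t)\le C_2t^\alpha$ from Corollary~\ref{cor-main}. Write
\[
\varphi(x)-\varphi(x-h)=\int_0^1 \partial_{\theta}\bigl[\varphi(\cdot - sh)\bigr](x)\,|h|\,ds,\qquad \theta=h/|h|,
\]
which is a directional derivative of a function that is bounded by $1$ but has no derivative bound. The modulus $\sigma$ instead requires a function $\psi$ with $\|\psi\|_\infty\le t$ and $\|\partial_\theta\psi\|_\infty\le1$. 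So use the one-dimensional averaging: for fixed $h$, set
\[
\psi(x)=\int_0^{|h|}\varphi(x-r\theta)\,dr,
\]
so that $\partial_\theta\psi(x)=\varphi(x)-\varphi(x-h)$, $\|\psi\|_\infty\le|h|$ and $\|\partial_\theta\psi\|_\infty\le2$. Hence
\[
\Bigl|\int(\varphi(x)-\varphi(x-h))\,\nu(dx)\Bigr|\le 2\sigma(\nu,|h|/2)\le C|h|^\alpha .
\]
Since $\psi$ is smooth and bounded, with bounded derivatives because $\varphi\in C_0^\infty$, it belongs to $C_b^\infty$. Applying $\sigma$ with $-\theta$ as well handles the sign. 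Averaging over $h$ against $\omega_\varepsilon$ gives $|\mathbb E(\varphi-\varphi_\varepsilon)(X)|\le C\varepsilon^\alpha$, with the constant depending only on $a,b,\eta,k,d$.

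\emph{Optimizing.} The three estimates give $d_{\rm TV}\le C(\varepsilon^\alpha+\varepsilon^{-1}d_{\rm KR})$. Choosing $\varepsilon=d_{\rm KR}^{1/(1+\alpha)}$ (valid when $d_{\rm KR}\le1$; otherwise the bound is trivial since $d_{\rm TV}\le2$) yields the exponent
\[
\frac{\alpha}{1+\alpha}=\frac{1}{k(d-1)+2}.
\]

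\emph{Sequential statement.} Suppose $X_n\to X_\infty$ in distribution and the law of $X_\infty$ is absolutely continuous. I expect this to be the main obstacle: verifying uniform nondegeneracy along the sequence. The plan is as follows.
\begin{enumerate}
\item $X_\infty\in\mathcal P_{d,k}^{\rm lc}$, since the class is closed under limits in distribution (by a diagonal argument with a metrization).
\item By the absolute-continuity criterion following Corollary~\ref{cor-main}, $\operatorname{Var}(X_{\infty,j})>0$ and $\det\Sigma^d(X_\infty)>0$.
\item By hypercontractivity-type moment comparison for polynomials of log-concave vectors, all moments of $X_n$ are uniformly bounded in terms of the second moments. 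Convergence in distribution forces second moments to be bounded: otherwise normalizing would produce a limit with a degenerate distribution, contradicting tightness. Hence uniform integrability holds, and all moments, in particular $\operatorname{Var}(X_{n,j})$ and $\Sigma^d(X_n)$, converge to those of $X_\infty$.
\item Therefore, for large $n$, the hypotheses hold with $a,b,\eta$ comparable to the limit's values. The finitely many remaining $n$ are absorbed into the constant, using $d_{\rm TV}\le2$ and $d_{\rm KR}>0$ for them unless $X_n\overset{d}{=}X_\infty$.
\end{enumerate}
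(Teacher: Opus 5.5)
Your proposal is correct and follows the paper's route: the paper obtains $d_{\rm TV}(X,Y)\le 6\sqrt{k}\max\{\sigma(\mathbb P\circ X^{-1},t),\sigma(\mathbb P\circ Y^{-1},t)\}+\sqrt{k}\,t^{-1}d_{\rm KR}(X,Y)$ by citing \cite[Lemma~3.1]{Kos-FCAA} and then optimizes in $t$ exactly as you optimize in $\varepsilon$ (your mollification combined with the averaged translate $\psi$ is a self-contained proof of that cited inequality), and your treatment of the sequential statement matches the paper's step by step. The one point to tighten is step~3: moment comparison holds for actual polynomials of log-concave vectors but not a priori for their distributional limits $X_n\in\mathcal P_{d,k}^{\rm lc}$, so the uniform moment bounds must first be established for the approximating polynomials (this is exactly Lemma~\ref{lem-moments-lc-closure}, via the Carbery--Wright inequality and tightness); also, in your normalization argument the contradiction does not come from tightness, but from the normalized variables having unit second moment and uniformly integrable squares, so they cannot converge to $0$.
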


\subsection{Notation}

Let $C^\infty(\mathbb R^n)$ denote the space of all smooth functions on
$\mathbb R^n$, let $C_0^\infty(\mathbb R^n)$ denote the space of all smooth
compactly supported functions on $\mathbb R^n$, and let
$C_b^\infty(\mathbb R^n)$ denote the space of all smooth functions on
$\mathbb R^n$ such that the function itself and all its partial derivatives
are bounded.

If $\mu$ is a Borel probability measure on
$\mathbb R^n$ and $f\colon \mathbb R^n\to \mathbb R^k$ is a Borel mapping, we
write $\mu\circ f^{-1}$ for the image measure, or pushforward, of $\mu$ under
$f$, that is,
\[
\mu\circ f^{-1}(A):=\mu\bigl(f^{-1}(A)\bigr)=\mu(f\in A)
\]
for every Borel set $A\subset\mathbb R^k$.
For $f\in L^2(\mu)$, let
\[
\operatorname{Var}_\mu(f)
:=
\int_{\mathbb R^n}
\Bigl(f-\int_{\mathbb R^n} f\,d\mu\Bigr)^2\,d\mu.
\]
For $f=(f_1,\ldots,f_k)$ with each $f_j\in L^2(\mu)$, let
\[
\langle \operatorname{Cov}_\mu(f)y,y\rangle
:=
\operatorname{Var}_\mu(\langle f,y\rangle)
\quad \forall y\in\mathbb R^k.
\]

We recall that a Borel probability measure $\mu$ on $\mathbb R^n$ is called
log-concave if
\[
\mu\bigl(s A+(1-s)B\bigr)
\ge
\bigl(\mu(A)\bigr)^s \bigl(\mu(B)\bigr)^{1-s}
\]
for all compact sets $A,B\subset\mathbb R^n$ and all $s\in[0,1]$.
If $\mu$ is absolutely continuous with respect to Lebesgue measure, then it is
log-concave if and only if its density is a log-concave function, that is, a
function of the form $e^{-V}$, where
$V\colon\mathbb R^n\to(-\infty,+\infty]$ is convex. In particular, the
normalized restriction of Lebesgue measure to any convex body in
$\mathbb R^n$ and Gaussian measures are log-concave.
We say that a probability measure on $\mathbb R^n$ is isotropic if it has zero
barycenter and identity covariance matrix.

For a mapping $f=(f_1,\ldots,f_k)$ with $f_j\in C^\infty(\mathbb R^n)$, let
\[
J_f=
\bigl(\partial_{x_j} f_i\bigr)_{\substack{1\le i\le k\\ 1\le j\le n}}
\]
be its Jacobian matrix, and let
\[
M_f = J_fJ_f^* =
\bigl(\langle \nabla f_i,\nabla f_j\rangle\bigr)_{1\le i,j\le k}
\]
be the Gram matrix of the gradients of the components of $f$. Let
\[
\Delta_f=\det M_f.
\]

Throughout the paper, constants are denoted by $C$ and may change from line to
line. Their dependence on parameters is always indicated explicitly, for
instance by writing $C(a,b,k,d)$. When several constants appear in the same
argument, we distinguish them by subscripts, writing $C_1,C_2$, etc. These
subscripts are only labels and do not indicate any additional dependence.

\subsection{Structure of the paper}

The rest of the paper is organized as follows. In
Section~\ref{sect-prelim}, we prove key technical lemmas establishing the
estimates of Theorems~\ref{th-main-lorentz} and~\ref{th-main-besov} with
dimension-dependent constants and with $\Delta_f$ as the nondegeneracy
parameter. In Section~\ref{sect-algeb}, for fixed $n\in\mathbb N$, we
establish a connection between $\Delta_f$ and $\Sigma_\mu^d(f)$ for isotropic
log-concave measures~$\mu$.
Section~\ref{sect-main} is devoted to the proof
of Theorems~\ref{th-main-lorentz} and~\ref{th-main-besov}, where the
dimension-dependent estimates are turned into dimension-free ones by means of
the localization lemma of Fradelizi and Gu\'edon~\cite{FrGue}.
Finally, in Section~\ref{sect-probab}, we prove the
corresponding probabilistic applications.

\section{Preliminary dimension-dependent estimates in terms of $\Delta_f$}\label{sect-prelim}

\subsection{Lemmas for Lorentz-norm estimates}

We first record a more convenient equivalent description of the
Lorentz-type estimate~\eqref{lorentz}.

\begin{lemma}\label{lem-lorentz-equivalence}
Let $\alpha\in(0,1]$ and let $\nu$ be a probability Borel measure on $\mathbb R^k$.
Then the estimate
\begin{equation}\label{eq-lorentz-1}
\nu(A)\le C\bigl(\lambda_k(A)\bigr)^\alpha
\end{equation}
for every Borel set $A\subset \mathbb R^k$ is equivalent to the estimate
\begin{equation}\label{eq-lorentz-2}
\int_{\mathbb R^k}\psi\,d\nu\le C\|\psi\|_{L^1(\mathbb R^k)}^\alpha
\end{equation}
for every $\psi\in C_0^\infty(\mathbb R^k)$ with $0\le\psi\le 1$.
\end{lemma}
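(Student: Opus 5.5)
We prove the two implications separately, keeping the same constant $C$ in both directions.

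\emph{From \eqref{eq-lorentz-1} to \eqref{eq-lorentz-2}.} Fix $\psi\in C_0^\infty(\mathbb R^k)$ with $0\le\psi\le1$ and apply the layer-cake formula:
\[
\int\psi\,d\nu=\int_0^1\nu(\psi>s)\,ds\le C\int_0^1\lambda_k(\psi>s)^\alpha\,ds.
\]
Since $\alpha\le1$, the map $u\mapsto u^\alpha$ is concave. Jensen's inequality on $[0,1]$ with Lebesgue measure therefore gives
\[
\int_0^1\lambda_k(\psi>s)^\alpha\,ds\le\Bigl(\int_0^1\lambda_k(\psi>s)\,ds\Bigr)^\alpha=\|\psi\|_{L^1}^\alpha.
\]
This direction is routine.

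\emph{From \eqref{eq-lorentz-2} to \eqref{eq-lorentz-1}.} We may assume $\lambda_k(A)<\infty$, since otherwise there is nothing to prove. The argument proceeds by approximation in three steps.

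\emph{Step 1: compact sets.} Let $K$ be compact. By outer regularity of $\lambda_k$, choose open sets $U_m\supset K$ with $\lambda_k(U_m)\downarrow\lambda_k(K)$. By smooth Urysohn, pick $\psi_m\in C_0^\infty(U_m)$ with $0\le\psi_m\le1$ and $\psi_m=1$ on $K$. Then
\[
\nu(K)\le\int\psi_m\,d\nu\le C\|\psi_m\|_{L^1}^\alpha\le C\lambda_k(U_m)^\alpha\to C\lambda_k(K)^\alpha.
\]

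\emph{Step 2: Borel sets of finite measure.} Let $A$ be Borel. Since $\nu$ is a finite Borel measure on $\mathbb R^k$, it is inner regular by compact sets, so $\nu(A)=\sup\{\nu(K)\colon K\subset A \text{ compact}\}$. For each such $K$ we have $\lambda_k(K)\le\lambda_k(A)$, and Step 1 gives $\nu(K)\le C\lambda_k(A)^\alpha$. Taking the supremum over $K$ yields \eqref{eq-lorentz-1}.

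\emph{Step 3: null sets.} The case $\lambda_k(A)=0$ is already covered by Steps 1 and 2. Indeed, every compact $K\subset A$ then satisfies $\lambda_k(K)=0$, so $\nu(K)\le C\cdot 0=0$ by Step 1, and hence $\nu(A)=0$.

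The only step requiring any care is the regularity bookkeeping in this second direction: we must use inner regularity of $\nu$ together with outer regularity of $\lambda_k$, and construct the smooth cutoffs. All of these are standard facts for Radon measures on $\mathbb R^k$.
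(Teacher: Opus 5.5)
Your proof is correct and follows essentially the same route as the paper. The first direction uses the layer-cake formula plus concavity of $u\mapsto u^\alpha$. The converse uses inner regularity of $\nu$ and a smooth cutoff equal to $1$ on a compact $K\subset A$ whose $L^1$ norm is close to $\lambda_k(K)$. The only cosmetic difference is that you get the cutoff abstractly from outer regularity of $\lambda_k$ and the smooth Urysohn lemma, whereas the paper builds it explicitly as $I_{K+B_\delta}*u_\delta$ and uses $\lambda_k(K+B_{2\delta})\to\lambda_k(K)$.
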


\begin{proof}
Assume first that \eqref{eq-lorentz-1} holds. Then, by Fubini's theorem,
\[
\int_{\mathbb R^k}\psi\,d\nu
=
\int_0^1\nu(\psi\ge t)\,dt
\le
C\int_0^1\bigl(\lambda_k(\psi\ge t)\bigr)^\alpha\,dt
\le
C\biggl(\int_0^1\lambda_k(\psi\ge t)\,dt\biggr)^\alpha
=
C\|\psi\|_{L^1(\mathbb R^k)}^\alpha.
\]

Conversely, assume that \eqref{eq-lorentz-2} holds, and let
$A\subset\mathbb R^k$ be a Borel set. If $\lambda_k(A)=\infty$, then
\eqref{eq-lorentz-1} is trivial. Thus, we may assume that
$\lambda_k(A)<\infty$. Let $\varepsilon>0$. Since $\nu$ is a Radon measure,
there exists a compact set $K\subset A$ such that
\[
\nu(A)
\le
\nu(K)+\varepsilon.
\]
For $\delta>0$, let $u_\delta\in C_0^\infty(\mathbb R^k)$ be such that
\[
\operatorname{supp}u_\delta\subset B_\delta,
\qquad
u_\delta\ge0,
\qquad
\int_{\mathbb R^k}u_\delta(y)\,dy=1,
\]
where $B_\delta$ is the Euclidean ball of radius $\delta$ centered at the
origin. Put
\[
\psi_\delta:=I_{K+B_\delta}*u_\delta.
\]
Then $\psi_\delta\in C_0^\infty(\mathbb R^k)$ and $0\le\psi_\delta\le1$.
Moreover, $\psi_\delta=1$ on $K$. Indeed, if $y\in K$ and
$z\in\operatorname{supp}u_\delta$, then $y-z\in K+B_\delta$, and hence
$\psi_\delta(y)=1$.

Applying \eqref{eq-lorentz-2}, we obtain
\[
\nu(A)-\varepsilon
\le
\nu(K)
\le
\int_{\mathbb R^k}\psi_\delta\,d\nu
\le
C\|\psi_\delta\|_{L^1(\mathbb R^k)}^\alpha
\le
C\bigl(\lambda_k(K+B_{2\delta})\bigr)^\alpha.
\]
Letting $\delta\to0$, we get
\[
\nu(A)-\varepsilon
\le
C\bigl(\lambda_k(K)\bigr)^\alpha
\le
C\bigl(\lambda_k(A)\bigr)^\alpha.
\]
Finally, letting $\varepsilon\to0$, we obtain \eqref{eq-lorentz-1}.
\end{proof}

We now use the following lemma due to A.~Carbery, F.~Ricci, and J.~Wright.

\begin{lemma}[see {\cite[Lemma~3.2]{CRW03}} and {\cite[Lemma~4.8]{ChGGHIW25}}]
\label{lem-effective-inverse}
Let $k, d\in\mathbb{N}$, $f\in \mathcal{P}_d(\mathbb{R}^k; \mathbb{R}^k)$ such that $\det J_f$ is not identically zero.
Then there exist $\ell\le d^k$ open subsets
$U_1,\ldots,U_\ell$ of $\mathbb R^k$ such that

\begin{enumerate}
\item The sets $U_1,\ldots,U_\ell$ are pairwise disjoint and cover
$\mathbb R^k$ except for a set of Lebesgue measure zero.

\item For every $j\in\{1,\ldots,\ell\}$, the restriction $f|_{U_j}$ is a
diffeomorphism onto its image.
\end{enumerate}
\end{lemma}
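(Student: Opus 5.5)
We would prove Lemma~\ref{lem-effective-inverse} by removing a proper algebraic subset of $\mathbb R^k$ on which $f$ either fails to be a local diffeomorphism or fails to be finite-to-one in a controlled way, and then cutting the complement into sheets on which $f$ is injective.

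\textbf{Step 1: the critical set.} Let $Z:=\{\det J_f=0\}$. Since $\det J_f$ is a nonzero polynomial, $Z$ is a proper real algebraic set, so $\lambda_k(Z)=0$. On the open set $\mathbb R^k\setminus Z$ the map $f$ is a local diffeomorphism by the inverse function theorem. By B\'ezout's theorem, applied to the system $f(x)=y$ over $\mathbb C$, every fiber $f^{-1}(y)\cap(\mathbb R^k\setminus Z)$ consists of isolated nondegenerate solutions. Their number is therefore at most $d^k$.

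\textbf{Step 2: the bad image set.} Let $W:=f(Z)$. We would show $\lambda_k(W)=0$. This follows from Sard's theorem, or from the fact that $f(Z)$ is semialgebraic of dimension less than $k$ by Tarski--Seidenberg. Set $V:=\mathbb R^k\setminus \overline{W}$, where $\overline W$ is the closure of the semialgebraic set $W$. Then $\overline W$ still has dimension less than $k$ and hence measure zero. Next set $\Omega:=f^{-1}(V)\setminus Z$. The set $f^{-1}(\overline W)$ is contained in the zero set of a nonzero polynomial: compose a nonzero polynomial vanishing on $\overline W$ with $f$, and note that the composition is nonzero because $f$ is locally open off $Z$. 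Hence $\lambda_k(\mathbb R^k\setminus\Omega)=0$.

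\textbf{Step 3: covering theory on $V$.} The restriction $f\colon \Omega\to V$ is a local diffeomorphism. We also claim it is proper, hence a finite covering on each connected component of $V$. Properness is the main obstacle. A sequence $x_m\in\Omega$ with $f(x_m)\to y\in V$ could escape to infinity, which is the asymptotic-value phenomenon. To handle this we would enlarge $W$ to include the set of asymptotic values of $f$, that is, the non-properness set $S_f$. For polynomial maps $S_f$ is semialgebraic of dimension at most $k-1$ (Jelonek's theorem), so it still has measure zero. With $W$ replaced by $f(Z)\cup S_f$, the restriction $f\colon\Omega\to V$ is a proper local diffeomorphism. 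It is therefore a covering over each component $V_i$ of $V$, of some degree $m_i\le d^k$ by Step 1.

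\textbf{Step 4: building the sheets.} Each component $V_i$ is an open semialgebraic set, so it has finitely many components. Cut $V_i$ further by removing a measure-zero semialgebraic set, for instance a finite union of hypersurfaces from a cylindrical decomposition. This leaves finitely many open cells $V_{i,r}$, each simply connected (each cell is homeomorphic to an open ball). Over a simply connected cell the covering is trivial, so $f^{-1}(V_{i,r})\cap\Omega$ splits into $m_i$ disjoint open sets, each mapped diffeomorphically onto $V_{i,r}$.

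Finally, we group the sheets to keep the count at most $d^k$. For a fixed sheet index $s\le d^k$, take $U_s$ to be the union over all cells $(i,r)$ of the $s$-th sheet over $V_{i,r}$. These sheets lie over pairwise disjoint image cells, so $f$ is injective on $U_s$. It is also a local diffeomorphism there and $U_s$ is open, so $f|_{U_s}$ is a diffeomorphism onto its image. The sets $U_s$ are pairwise disjoint, there are $\ell=\max_i m_i\le d^k$ of them, and they cover $\Omega$ up to the preimage of the removed cell walls. That preimage has measure zero by the same composition argument as in Step 2.
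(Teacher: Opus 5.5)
Your argument is correct. The paper gives no proof of this lemma; it is quoted from \cite{CRW03} and \cite{ChGGHIW25}, so the comparison below is with a standard argument rather than with the text.

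You are right that properness is the crux of your route. Without removing the asymptotic-value set, Step~3 already fails for $f(x,y)=(x,xy)$. There $\overline{f(Z)}=\{0\}$, but $f$ maps $\{x\ne0\}$ bijectively onto $\{x\ne0\}$, which is not closed in $\mathbb R^2\setminus\{0\}$, so $f$ is not a covering there. Once $S_f$ is added, the nullity of $S_f$ can also be seen by complexifying. A real asymptotic value is a complex one, and for a dominant map $\mathbb C^k\to\mathbb C^k$ Jelonek's set lies in a hypersurface, whose real trace lies in a proper real algebraic set. With $S_f$ removed, every step checks:
\begin{itemize}
\item $\Omega=f^{-1}(V)$, because $f(Z)\subset\overline W$;
\item $f\colon\Omega\to V$ is a proper local diffeomorphism, hence a finite covering over each component;
\item trivializing over the top-dimensional CAD cells and regrouping sheets by index gives $\ell\le d^k$;
\item the null-preimage argument via $g\circ f\not\equiv0$ is sound.
\end{itemize}
Two small points. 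In Step~1 you need the form of B\'ezout that bounds the number of isolated solutions; alternatively, note that the sheet number is constant on each $V_i$ and evaluate it at a generic point. In Step~4 it is $V$ that has finitely many components, though finiteness is not needed anywhere.

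The cost of your route is heavy machinery: Jelonek's theorem, covering theory, and CAD cells homeomorphic to cubes. It produces a genuine finite covering of constant degree over each component of $\mathbb R^k\setminus\overline W$, which neither the lemma nor its use in Corollary~\ref{cor-lorentz} requires.

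A much lighter argument avoids properness by ranking preimages within fibres. For $x\in G:=\mathbb R^k\setminus Z$, let $r(x)$ be the number of $x'\in G$ with $f(x')=f(x)$ and $x'<_{\rm lex}x$.
\begin{itemize}
\item By B\'ezout, $0\le r\le d^k-1$.
\item Each level set $A_j=\{r=j\}$ is semialgebraic by Tarski--Seidenberg.
\item $f$ is injective on $A_j$, since $x'<_{\rm lex}x$ in a common fibre forces $r(x)\ge r(x')+1$.
\item $\partial A_j$ is semialgebraic with empty interior, hence null.
\end{itemize}
Then $U_j:=\operatorname{int}A_j$, $j=0,\ldots,d^k-1$, have the required properties. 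Alternatively, Hardt's semialgebraic triviality theorem applied to $f|_G$ replaces your Steps~3--4 in one stroke, again with no properness hypothesis.
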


\begin{corollary}\label{cor-lorentz}
Let $k,d\in\mathbb N$, and let $\varrho$ be a nonnegative integrable function
on $\mathbb R^k$ such that
\[
0\le \varrho(x)\le M
\quad \forall x\in\mathbb R^k.
\]
Let $f\in \mathcal P_d(\mathbb R^k;\mathbb R^k)$. Then, for every
$\varepsilon>0$ and every $\psi\in C_0^\infty(\mathbb R^k)$ with
$0\le \psi\le 1$, one has
\[
\int_{\mathbb R^k}\psi(f)\varrho\,dx
\le
\int_{\mathbb R^k}I_{\{|\det J_f|\le \varepsilon\}}\varrho\,dx
+
d^kM\varepsilon^{-1}\|\psi\|_{L^1(\mathbb R^k)}.
\]
\end{corollary}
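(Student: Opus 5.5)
The argument splits the integral according to the size of the Jacobian. Writing $E_\varepsilon:=\{|\det J_f|\le\varepsilon\}$, we have
\[
\int_{\mathbb R^k}\psi(f)\varrho\,dx
=\int_{E_\varepsilon}\psi(f)\varrho\,dx+\int_{\mathbb R^k\setminus E_\varepsilon}\psi(f)\varrho\,dx .
\]
Since $0\le\psi\le1$, the first term is at most $\int I_{E_\varepsilon}\varrho\,dx$, which is exactly the first term on the right-hand side of the claim. So we only need to bound the second term by $d^kM\varepsilon^{-1}\|\psi\|_{L^1}$.

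First we dispose of the degenerate case. If $\det J_f\equiv0$, then $E_\varepsilon=\mathbb R^k$, the second term vanishes, and there is nothing to prove. Otherwise Lemma~\ref{lem-effective-inverse} applies and gives disjoint open sets $U_1,\dots,U_\ell$ with $\ell\le d^k$. They cover $\mathbb R^k$ up to a null set, and each $f|_{U_j}$ is a diffeomorphism onto its image $f(U_j)$.

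On each piece we use $\varrho\le M$ and $|\det J_f|>\varepsilon$, so that $1< \varepsilon^{-1}|\det J_f|$ there. This gives
\[
\int_{U_j\setminus E_\varepsilon}\psi(f)\varrho\,dx
\le M\varepsilon^{-1}\int_{U_j}\psi(f(x))\,|\det J_f(x)|\,dx
= M\varepsilon^{-1}\int_{f(U_j)}\psi(y)\,dy
\le M\varepsilon^{-1}\|\psi\|_{L^1(\mathbb R^k)}.
\]
The middle equality is the change of variables formula for the diffeomorphism $f|_{U_j}$, and the last step uses $\psi\ge0$. Summing over $j\le\ell\le d^k$, and using that the complement of $\bigcup_j U_j$ is Lebesgue-null, gives the required bound on the second term.

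There is no real obstacle here; all the substance is in Lemma~\ref{lem-effective-inverse}. The one point to check is that the change of variables is legitimate on each $U_j$. It is, because $f|_{U_j}$ is a diffeomorphism of open sets and the integrand is nonnegative and measurable.
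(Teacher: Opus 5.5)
Your proposal is correct and follows essentially the same route as the paper: you dispose of the case $\det J_f\equiv0$, bound the region $\{|\det J_f|\le\varepsilon\}$ trivially, use $1<\varepsilon^{-1}|\det J_f|$ and $\varrho\le M$ on its complement, and apply change of variables on the at most $d^k$ pieces from Lemma~\ref{lem-effective-inverse}. The only cosmetic difference is that you insert the factor $\varepsilon^{-1}|\det J_f|$ piece by piece rather than once globally before splitting into the $U_j$.
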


\begin{proof}
If $\det J_f$ is identically zero, the estimate follows immediately from
$0\le\psi\le1$. Assume that $\det J_f$ is not identically zero. We have
\[
\int_{\mathbb R^k}\psi(f)\varrho\,dx
\le
\int_{\mathbb R^k}I_{\{|\det J_f|\le \varepsilon\}}\varrho\,dx
+
M\int_{\mathbb R^k}I_{\{|\det J_f|> \varepsilon\}}\psi(f)\,dx.
\]
For the second term,
\[
\int_{\mathbb R^k}I_{\{|\det J_f|> \varepsilon\}}\psi(f)\,dx
\le
\varepsilon^{-1}
\int_{\mathbb R^k}\psi(f)|\det J_f|\,dx.
\]
By Lemma~\ref{lem-effective-inverse}, there exist pairwise disjoint open sets
$U_1,\ldots,U_\ell$, with $\ell\le d^k$, which cover $\mathbb R^k$ up to a
set of Lebesgue measure zero, and such that $f|_{U_j}$ is a diffeomorphism
onto its image for every $j\in\{1,\ldots,\ell\}$. Therefore, by the change of
variables formula,
\[
\int_{\mathbb R^k}\psi(f)|\det J_f|\,dx
=
\sum_{j=1}^{\ell}
\int_{U_j}\psi(f)|\det J_f|\,dx
=
\sum_{j=1}^{\ell}
\int_{f(U_j)}\psi(y)\,dy
\le
d^k\int_{\mathbb R^k}\psi(y)\,dy.
\]
Combining the preceding estimates gives the desired bound.
\end{proof}

We will also use the following elementary observation concerning averages over the Grassmannian.

\begin{lemma}\label{lem-Gr-average}
Let $k, n\in \mathbb{N}$, $n\ge k$, let $\nu_{k,n}$ be the normalized Haar measure on the Grassmannian
$Gr(k,n)$ of all $k$-dimensional subspaces of $\mathbb R^n$. Then there
exists a constant $C(k,n)>0$ such that, for every $k\times n$ matrix $J$,
one has
\[
\int_{Gr(k,n)}(\det J^H)^2\,\nu_{k,n}(dH)
=
C(k,n)\det(JJ^*),
\]
where $J^H:=JU_H$ and $U_H$ is the $n\times k$ matrix whose columns form an
orthonormal basis in $H$.
\end{lemma}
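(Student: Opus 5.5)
Our plan is to use Cauchy--Binet together with the rotation invariance of the Haar measure. Since $U_H$ is determined by $H$ only up to right multiplication by an orthogonal $k\times k$ matrix $O$, and $\det(JU_HO)^2=\det(JU_H)^2$, the integrand depends only on $H$. It is therefore well defined, and we may equally integrate over the Stiefel manifold $V_{k,n}$ of $n\times k$ matrices $U$ with orthonormal columns, taken with its normalized $O(n)$-invariant measure.

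Consider the function
\[
\Phi(J):=\int_{Gr(k,n)}(\det JU_H)^2\,\nu_{k,n}(dH),
\]
which is a quadratic form in each row jointly; more precisely, it is a polynomial in $J$. We first reduce to the case where $J$ has full rank $k$. If $\operatorname{rank}J<k$, then $JU_H$ is singular for every $H$, so both sides vanish. Otherwise, write the polar decomposition $J=P\,W$, where $P=(JJ^*)^{1/2}$ is a $k\times k$ positive definite matrix and $W=P^{-1}J$ has orthonormal rows, that is, $WW^*=I_k$. Then
\[
\det(JU_H)=\det P\cdot\det(WU_H),\qquad (\det P)^2=\det(JJ^*).
\]
Hence $\Phi(J)=\det(JJ^*)\,\Phi(W)$.

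It remains to show that $\Phi(W)$ takes the same value for every $W$ with orthonormal rows. Any two such $W$ are related by $W'=WR$ for some $R\in O(n)$, since both consist of $k$ orthonormal rows and we may extend each to an orthonormal basis. Then
\[
\det(W'U_H)=\det(W U_{RH})
\]
up to sign, because $RU_H$ is an orthonormal basis of $RH$. By the $O(n)$-invariance of $\nu_{k,n}$, we get $\Phi(W')=\Phi(W)$. We define $C(k,n):=\Phi(W_0)$ with $W_0=(I_k\ 0)$.

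Finally, we check that $C(k,n)>0$. The quantity $\det(W_0U_H)^2$ is the squared cosine of the principal-angle product between $H$ and $\mathbb R^k\times\{0\}$. It is a continuous, nonnegative function of $H$, and it equals $1$ at $H=\mathbb R^k\times\{0\}$, so it is positive on an open set. Since the Haar measure charges every nonempty open set, the integral is positive.

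There is no serious obstacle. The only points requiring care are that the integrand is independent of the choice of basis for $H$, and that the invariance step is carried out correctly by passing from $U_H$ to $RU_H$.
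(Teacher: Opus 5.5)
Your proof is correct and follows essentially the same route as the paper: factor out $\det(JJ^*)$, then use the $O(n)$-invariance of $\nu_{k,n}$ to reduce the remaining integral to the fixed matrix $(I_k\ 0)$, and get positivity from continuity near $\mathbb R^k\times\{0\}$. The paper factors $J$ via the singular value decomposition $J=S\Sigma V^*$, which covers $\operatorname{rank}J<k$ automatically, whereas your polar decomposition $J=(JJ^*)^{1/2}W$ needs that case treated separately, as you do; the Cauchy--Binet formula you announce at the start is never actually used.
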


\begin{proof}
Let $H\in Gr(k,n)$, and let $e_1,\ldots,e_k$ be an orthonormal basis in $H$.
Denote by $U_H$ the $n\times k$ matrix whose columns are $e_1,\ldots,e_k$.
The quantity
\[
\bigl(\det(JU_H)\bigr)^2
\]
does not depend on the particular orthonormal basis chosen in $H$. Indeed, if
another orthonormal basis is chosen, then $U_H$ is replaced by $U_HO$ for
some orthogonal $k\times k$ matrix $O$, and therefore
\[
\bigl(\det(JU_HO)\bigr)^2
=
\bigl(\det(JU_H)\bigr)^2(\det O)^2
=
\bigl(\det(JU_H)\bigr)^2.
\]

We have
\[
(\det J^H)^2
=
(\det(JU_H))^2
=
\det(JU_H)\det(U_H^*J^*)
=
\det(JU_HU_H^*J^*).
\]
Let
\[
J=S\Sigma V^*
\]
be a singular value decomposition of $J$, where $S$ and $V$ are orthogonal
matrices and
\[
\Sigma
=
\begin{pmatrix}
\sigma_1 & 0 & \ldots & 0 & 0 & \ldots & 0\\
0 & \sigma_2 & \ldots & 0 & 0 & \ldots & 0\\
\ldots & \ldots & \ldots & \ldots & \ldots & \ldots & \ldots\\
0 & 0 & \ldots & \sigma_k & 0 & \ldots & 0
\end{pmatrix}.
\]
Since $J^*=V\Sigma^*S^*$, we have
\[
JU_HU_H^*J^*
=
S\Sigma V^*U_HU_H^*V\Sigma^*S^*.
\]
The matrix $V^*U_H$ has orthonormal columns and corresponds to the subspace
$V^*H$. Hence
\[
V^*U_HU_H^*V
=
U_{V^*H}U_{V^*H}^*.
\]
Thus,
\[
JU_HU_H^*J^*
=
S\Sigma U_{V^*H}U_{V^*H}^*\Sigma^*S^*.
\]
Taking determinants and using the fact that $S$ is orthogonal, we obtain
\[
\det(JU_HU_H^*J^*)
=
\det(\Sigma U_{V^*H}U_{V^*H}^*\Sigma^*).
\]

Let
\[
\Sigma_0
=
\begin{pmatrix}
1 & 0 & \ldots & 0 & 0 & \ldots & 0\\
0 & 1 & \ldots & 0 & 0 & \ldots & 0\\
\ldots & \ldots & \ldots & \ldots & \ldots & \ldots & \ldots\\
0 & 0 & \ldots & 1 & 0 & \ldots & 0
\end{pmatrix}.
\]
Then
\[
\Sigma=D\Sigma_0,
\quad
D=\operatorname{diag}(\sigma_1,\ldots,\sigma_k).
\]
Therefore,
\[
\Sigma U_{V^*H}U_{V^*H}^*\Sigma^*
=
D\Sigma_0 U_{V^*H}U_{V^*H}^*\Sigma_0^*D.
\]
Consequently,
\[
\det(\Sigma U_{V^*H}U_{V^*H}^*\Sigma^*)
=
\sigma_1^2\ldots\sigma_k^2
\det(\Sigma_0 U_{V^*H}U_{V^*H}^*\Sigma_0^*).
\]
Since
\[
\sigma_1^2\ldots\sigma_k^2=\det(JJ^*),
\]
we get
\[
(\det J^H)^2
=
\det(JJ^*)
\det(\Sigma_0 U_{V^*H}U_{V^*H}^*\Sigma_0^*).
\]
Using the invariance of $\nu_{k,n}$ under orthogonal transformations, we
obtain
\begin{align*}
\int_{Gr(k,n)}(\det J^H)^2\,\nu_{k,n}(dH)
&=
\det(JJ^*)
\int_{Gr(k,n)}
\det(\Sigma_0 U_{V^*H}U_{V^*H}^*\Sigma_0^*)\,\nu_{k,n}(dH)
\\
&=
\det(JJ^*)
\int_{Gr(k,n)}
\det(\Sigma_0 U_HU_H^*\Sigma_0^*)\,\nu_{k,n}(dH).
\end{align*}
The last integral depends only on $k$ and $n$. Denote it by $C(k,n)$. It is
positive, because the integrand is continuous, nonnegative, and strictly
positive in a neighborhood of
\[
H=\operatorname{span}\{e_1,\ldots,e_k\}.
\]
Therefore,
\[
\int_{Gr(k,n)}(\det J^H)^2\,\nu_{k,n}(dH)
=
C(k,n)\det(JJ^*),
\]
which completes the proof.
\end{proof}

We now apply the preceding two statements in the isotropic log-concave setting.

\begin{lemma}\label{lem-lorentz-dimensional}
Let $n,k,d\in\mathbb N$, $n\ge k$, and $d\ge2$. There exists a constant
$C(k,d,n)>0$, depending only on $k,d,n$, such that, for every isotropic
log-concave measure $\mu$ on $\mathbb R^n$, every
$f=(f_1,\ldots,f_k)\in\mathcal P_d(\mathbb R^n;\mathbb R^k)$, and every
$\psi\in C_0^\infty(\mathbb R^k)$ with $0\le \psi\le1$, one has
\[
\biggl(\int_{\mathbb R^n}\Delta_f\,d\mu\biggr)^{\frac{1}{2(k(d-1)+1)}}
\biggl(\int_{\mathbb R^n}\psi(f)\,d\mu\biggr)
\le
C(k,d,n)
\|\psi\|_{L^1(\mathbb R^k)}^{\frac{1}{k(d-1)+1}}.
\]
\end{lemma}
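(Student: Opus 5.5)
We prove the lemma by slicing along $k$-dimensional subspaces, applying Corollary~\ref{cor-lorentz} on each slice, optimizing in $\varepsilon$, and averaging over the Grassmannian with Lemma~\ref{lem-Gr-average}. We use the following notation. Fix $H\in Gr(k,n)$ and write $x=U_Hu+z$ with $u\in\mathbb R^k$ and $z\in H^\perp$. Then the density $\varrho$ of $\mu$ (assume first that $\mu$ is absolutely continuous on $\mathbb R^n$; isotropy forces this) disintegrates as $\int_{H^\perp}\int_{\mathbb R^k}\cdot\,\varrho(U_Hu+z)\,du\,dz$. For fixed $z$ the map $g_z(u):=f(U_Hu+z)$ lies in $\mathcal P_d(\mathbb R^k;\mathbb R^k)$, and $\det J_{g_z}(u)=\det J_f^H(U_Hu+z)$. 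An isotropic log-concave density on $\mathbb R^n$ is bounded by a constant $M(n)$ depending only on $n$. Applying Corollary~\ref{cor-lorentz} to $g_z$ with weight $\varrho(U_H\cdot+z)$ and integrating over $z$ gives
\[
\int\psi(f)\,d\mu\le \mu\bigl(|\det J_f^H|\le\varepsilon\bigr)+d^kM(n)\,\varepsilon^{-1}\lambda_{n-k}(\text{support slice})\,\|\psi\|_{L^1}.
\]
The $z$-integral is over all of $H^\perp$, which is unbounded. To fix this, we first truncate to the ball $|x|\le R$: its complement has $\mu$-mass at most $e^{-cR}$, and we take $R\sim\log$ of the small quantity. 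Alternatively, we replace $\varrho(U_Hu+z)\le M$ by the bound $\sup_u\varrho(U_Hu+z)$, whose $z$-integral is bounded by a constant $C(n)$ for isotropic log-concave densities. We prefer the second route: the marginal-type quantity $\int_{H^\perp}\sup_u\varrho(U_Hu+z)\,dz\le C(n)$ follows from the exponential decay $\varrho(x)\le C(n)e^{-c(n)|x|}$.

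Next we average over $H$ and bound the first term by Carbery--Wright. The function $(\det J_f^H)^2$ is a polynomial of degree at most $2k(d-1)$ on $\mathbb R^n$. By \eqref{CW-est},
\[
\mu\bigl(|\det J_f^H|\le\varepsilon\bigr)=\mu\bigl((\det J_f^H)^2\le\varepsilon^2\bigr)\le C(k,d)\,\varepsilon^{\frac{1}{k(d-1)}}\,\|(\det J_f^H)^2\|_{L^2(\mu)}^{-\frac{1}{2k(d-1)}}.
\]
The reverse Hölder inequality for polynomials under log-concave measures lets us replace the $L^2$ norm by the $L^1$ norm $\int(\det J_f^H)^2\,d\mu$, up to a constant. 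It would be simpler to work directly with $|\det J_f^H|$ and degree $k(d-1)$, but it is not a polynomial, so we keep the square.

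The main obstacle is that $\int(\det J_f^H)^2\,d\mu$ may be small for a particular $H$. We therefore choose $H$ well rather than averaging the small-ball bound, which has a negative power. By Lemma~\ref{lem-Gr-average} and Fubini,
\[
\int_{Gr}\int(\det J_f^H)^2\,d\mu\,\nu_{k,n}(dH)=C(k,n)\int\Delta_f\,d\mu,
\]
so some $H$ satisfies $\int(\det J_f^H)^2\,d\mu\ge C(k,n)\int\Delta_f\,d\mu=:C(k,n)D$. For this $H$ we get
\[
\int\psi(f)\,d\mu\le C\,\varepsilon^{\frac{1}{k(d-1)}}D^{-\frac{1}{2k(d-1)}}+C\,\varepsilon^{-1}\|\psi\|_{L^1}.
\]
Optimizing in $\varepsilon$ gives $\varepsilon^{\frac{k(d-1)+1}{k(d-1)}}=D^{\frac{1}{2k(d-1)}}\|\psi\|_{L^1}$. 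Substituting, the bound becomes $C\,D^{-\frac{1}{2(k(d-1)+1)}}\|\psi\|_{L^1}^{\frac{1}{k(d-1)+1}}$, which is exactly the claim. If $D=0$ there is nothing to prove.
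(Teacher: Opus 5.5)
Your proof is correct and follows the paper's route: you slice along $H$, apply Corollary~\ref{cor-lorentz} with $\sup_{y\in H}\varrho(y+z)$ controlled by the exponential decay of the density, use Carbery--Wright for $\det J_f^H$, optimize in $\varepsilon$, and invoke Lemma~\ref{lem-Gr-average}. The differences are only cosmetic. You select one subspace $H$ whose $\int(\det J_f^H)^2\,d\mu$ is at least the Grassmannian average, whereas the paper rewrites the bound so that this integral appears linearly and then averages over $Gr(k,n)$. Also, no reverse H\"older inequality is needed: $\det J_f^H$ is itself a polynomial, so \eqref{CW-est} applies to it directly, and in your squared version only the trivial bound $\|g\|_{L^1(\mu)}\le\|g\|_{L^2(\mu)}$ is used.
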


\begin{proof}
Let $\varrho$ be the density of $\mu$. Combining
\cite[Corollary~2.4]{Klartag07} with
\cite[Theorem~5.14(d)]{LV07}, we obtain constants
$c_1(n),c_2(n)>0$, depending only on $n$, such that
\begin{equation}\label{eq-exp-bound}
\varrho(x)\le c_1(n)e^{-c_2(n)|x|}
\quad \forall x\in\mathbb R^n.
\end{equation}
In particular, all sections considered below are bounded and integrable.

Fix a $k$-dimensional subspace $H\subset\mathbb R^n$ and an orthonormal basis
$e_1,\ldots,e_k$ in $H$. Write
\[
x=y+z,
\quad
y\in H,
\quad
z\in H^\perp.
\]
For every fixed $z\in H^\perp$, we apply Corollary~\ref{cor-lorentz} on the
subspace $H$ to the mapping $y\mapsto f(y+z)$ and to the density
$y\mapsto \varrho(y+z)$. For every $\varepsilon>0$, this gives
\[
\int_H \psi(f(y+z))\varrho(y+z)\,dy
\le
\int_H I_{\{|\det J_f^H(y+z)|\le\varepsilon\}}\varrho(y+z)\,dy
+
d^k\varepsilon^{-1}
\|\psi\|_{L^1(\mathbb R^k)}
\sup_{y\in H}\varrho(y+z).
\]
Integrating with respect to $z\in H^\perp$, we obtain
\[
\int_{\mathbb R^n}\psi(f)\,d\mu
\le
\mu\bigl(|\det J_f^H|\le\varepsilon\bigr)
+
d^k\varepsilon^{-1}
\|\psi\|_{L^1(\mathbb R^k)}
\int_{H^\perp}\sup_{y\in H}\varrho(y+z)\,dz.
\]
By \eqref{eq-exp-bound},
\[
\int_{H^\perp}\sup_{y\in H}\varrho(y+z)\,dz
\le
c_1(n)\int_{H^\perp}e^{-c_2(n)|z|}\,dz
\le C_1(k,n).
\]
Thus,
\begin{equation}\label{eq-lorentz-dimensional-H}
\int_{\mathbb R^n}\psi(f)\,d\mu
\le
\mu\bigl(|\det J_f^H|\le\varepsilon\bigr)
+
C_2(k,d,n)\varepsilon^{-1}
\|\psi\|_{L^1(\mathbb R^k)}.
\end{equation}

Assume first that
\[
\int_{\mathbb R^n}(\det J_f^H)^2\,d\mu>0.
\]
Since
\[
\det J_f^H\in\mathcal P_{k(d-1)}(\mathbb R^n),
\]
the Carbery--Wright inequality \eqref{CW-est} gives
\[
\mu\bigl(|\det J_f^H|\le\varepsilon\bigr)
\le
C_3(k,d)
\varepsilon^{\frac{1}{k(d-1)}}
\biggl(\int_{\mathbb R^n}(\det J_f^H)^2\,d\mu\biggr)^{-\frac{1}{2k(d-1)}}.
\]
Combining this with \eqref{eq-lorentz-dimensional-H} and choosing
\[
\varepsilon
=
\biggl(
\|\psi\|_{L^1(\mathbb R^k)}
\biggl(\int_{\mathbb R^n}(\det J_f^H)^2\,d\mu\biggr)^{\frac{1}{2k(d-1)}}
\biggr)^{\frac{k(d-1)}{k(d-1)+1}},
\]
we obtain
\[
\int_{\mathbb R^n}\psi(f)\,d\mu
\le
C_4(k,d,n)
\|\psi\|_{L^1(\mathbb R^k)}^{\frac{1}{k(d-1)+1}}
\biggl(\int_{\mathbb R^n}(\det J_f^H)^2\,d\mu\biggr)^{
-\frac{1}{2(k(d-1)+1)}
}.
\]
Equivalently,
\[
\biggl(\int_{\mathbb R^n}(\det J_f^H)^2\,d\mu\biggr)
\biggl(
\int_{\mathbb R^n}\psi(f)\,d\mu
\biggr)^{2(k(d-1)+1)}
\le
C_5(k,d,n)
\|\psi\|_{L^1(\mathbb R^k)}^2.
\]
This estimate is also valid when the first integral on the left-hand side is zero.

Averaging the last estimate over all $H\in Gr(k,n)$ with respect to the
normalized Haar measure $\nu_{k,n}$ and applying Lemma~\ref{lem-Gr-average},
we obtain
\[
\int_{Gr(k,n)}
\int_{\mathbb R^n}(\det J_f^H)^2\,d\mu\,\nu_{k,n}(dH)
=
C_6(k,n)\int_{\mathbb R^n}\Delta_f\,d\mu
\]
and hence
\[
\biggl(\int_{\mathbb R^n}\Delta_f\,d\mu\biggr)
\biggl(
\int_{\mathbb R^n}\psi(f)\,d\mu
\biggr)^{2(k(d-1)+1)}
\le
C_7(k,d,n)
\|\psi\|_{L^1(\mathbb R^k)}^2.
\]
Taking the power $\frac{1}{2(k(d-1)+1)}$ gives the claimed bound.
\end{proof}

\subsection{Lemmas for regularity estimates}

\begin{lemma}\label{key-lemma-1}
Let $k,d\in\mathbb N$, and let $\varrho\in C_0^\infty(\mathbb R^k)$ be a
nonnegative function. Let
$f\in \mathcal P_d(\mathbb R^k;\mathbb R^k)$,
and let $A_f:=\operatorname{adj}(J_f)$ denote the adjugate matrix of the Jacobian $J_f$.
Then, for every $\varepsilon>0$, every unit vector $\theta\in\mathbb R^k$,
every $t>0$, and every $\varphi\in C_b^\infty(\mathbb R^k)$ satisfying
\[
\|\varphi\|_\infty\le t,
\quad
\|\partial_\theta\varphi\|_\infty\le 1,
\]
one has
\begin{align*}
\biggl|
\int_{\mathbb R^k}(\partial_\theta\varphi)(f)\varrho\,dx
\biggr|
&\le
\int_{\mathbb R^k} I_{\{(\det J_f)^2\le 2\varepsilon^2\}}\varrho\,dx
\\
&\quad
+
t\varepsilon^{-1}
\int_{\mathbb R^k}
\bigl|\langle \nabla\varrho,A_f\theta\rangle\bigr|\,dx
+
18t
\int_{\mathbb R^k}
\frac{\bigl|\langle \nabla\det J_f,A_f\theta\rangle\bigr|}
{(\det J_f)^2+\varepsilon^2}
\varrho\,dx.
\end{align*}
\end{lemma}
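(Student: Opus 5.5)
The plan rests on the algebraic identity $J_fA_f=(\det J_f)I$. Write $D:=\det J_f$. For $\varphi\in C_b^\infty(\mathbb R^k)$ the chain rule gives
\[
\bigl\langle \nabla(\varphi\circ f),A_f\theta\bigr\rangle
=\bigl\langle(\nabla\varphi)(f),J_fA_f\theta\bigr\rangle
=D\,(\partial_\theta\varphi)(f).
\]
So wherever $D\neq0$ we have $(\partial_\theta\varphi)(f)=D^{-1}\langle\nabla(\varphi\circ f),A_f\theta\rangle$. This expresses $(\partial_\theta\varphi)(f)$ as a derivative of $\varphi\circ f$, which is of size $t$, along the vector field $A_f\theta$. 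We then integrate by parts. For this we use the Piola identity: the rows of the adjugate of a Jacobian are divergence free, $\sum_j\partial_{x_j}(A_f)_{ji}=0$. Hence $\operatorname{div}(A_f\theta)=0$, and $\int\langle\nabla u,A_f\theta\rangle w\,dx=-\int u\langle\nabla w,A_f\theta\rangle\,dx$ for smooth $u,w$ with $w$ compactly supported.

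To avoid dividing by $D$ near its zero set, fix a smooth cutoff $\eta\colon[0,\infty)\to[0,1]$ with $\eta=0$ on $[0,1]$, $\eta=1$ on $[2,\infty)$ and $\|\eta'\|_\infty\le2$. Set $g:=\eta(D^2/\varepsilon^2)$. Split $(\partial_\theta\varphi)(f)\varrho=(\partial_\theta\varphi)(f)(1-g)\varrho+(\partial_\theta\varphi)(f)g\varrho$. Since $|\partial_\theta\varphi|\le1$ and $1-g$ vanishes where $D^2\ge2\varepsilon^2$, the first piece is bounded in absolute value by $\int I_{\{D^2\le2\varepsilon^2\}}\varrho\,dx$. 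In the second piece, $g/D$ is a smooth function, because $g=0$ near $\{D^2\le\varepsilon^2\}$. Moreover $g\varrho$ is compactly supported, so integrating by parts gives
\[
\int(\partial_\theta\varphi)(f)g\varrho\,dx
=-\int\varphi(f)\Bigl\langle\nabla\Bigl(\frac{g\varrho}{D}\Bigr),A_f\theta\Bigr\rangle dx .
\]

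Expanding the gradient produces three terms, each with the factor $|\varphi(f)|\le t$.

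The first term is $\frac{g}{D}\langle\nabla\varrho,A_f\theta\rangle$. Since $D^2\ge\varepsilon^2$ on the support of $g$, we have $|g/D|\le\varepsilon^{-1}$. This gives the term $t\varepsilon^{-1}\int|\langle\nabla\varrho,A_f\theta\rangle|\,dx$.

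The second term is $-\frac{g}{D^2}\langle\nabla D,A_f\theta\rangle\varrho$. On the support of $g$ we have $D^2\ge\varepsilon^2$, so $D^2+\varepsilon^2\le 2D^2$ and hence $g/D^2\le2/(D^2+\varepsilon^2)$.

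The third term is $\eta'(D^2/\varepsilon^2)\,\frac{2}{\varepsilon^2}\langle\nabla D,A_f\theta\rangle\varrho$. Here $\eta'$ is supported where $\varepsilon^2\le D^2\le2\varepsilon^2$. There $D^2+\varepsilon^2\le3\varepsilon^2$, so $2|\eta'|/\varepsilon^2\le 4\cdot3/(D^2+\varepsilon^2)=12/(D^2+\varepsilon^2)$.

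Adding the second and third terms gives a constant $14\le18$ in front of $t\int\frac{|\langle\nabla D,A_f\theta\rangle|}{D^2+\varepsilon^2}\varrho\,dx$, which yields the stated bound.

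I expect the main obstacle to be justifying the integration by parts. The key point there is the divergence-free property of $A_f\theta$. I would verify it via the standard cofactor identity $\sum_j\partial_j\operatorname{cof}(J_f)_{ij}=0$, which follows from the equality of mixed partials of $f$; alternatively, one can note that $\langle\nabla u,A_f\theta\rangle$ is, up to sign, the Jacobian determinant of the map obtained by replacing one component of $f$ by $u$, after rotating so that $\theta=e_i$. Everything else is just bookkeeping of the cutoff constants.
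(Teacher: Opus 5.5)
Your proposal is correct and follows the paper's proof essentially step by step. Both use the same cutoff splitting $1=(1-\Phi_\varepsilon(D^2))+\Phi_\varepsilon(D^2)$, the identity $D\,(\partial_\theta\varphi)(f)=\langle\nabla(\varphi\circ f),A_f\theta\rangle$, and the Piola identity to integrate by parts against the smooth compactly supported function $g\varrho/D$ (the paper's $\Psi_\varepsilon(\det J_f)\varrho$), followed by the bounds $|g/D|\le\varepsilon^{-1}$ and an $O\bigl((D^2+\varepsilon^2)^{-1}\bigr)$ bound for the derivative terms. The only differences are cosmetic: you expand $\nabla(g\varrho/D)$ directly instead of bounding $\Psi_\varepsilon'$ as a one-variable function, which gives the slightly better constant $14$ in place of $18$. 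Also, the divergence-free vectors are the columns of $A_f$ (the rows of the cofactor matrix), which is what your formula $\sum_j\partial_{x_j}(A_f)_{ji}=0$ actually says.
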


\begin{proof}
Fix a function $\eta\in C^\infty(\mathbb R)$ such that
\begin{equation}\label{eq-Phi}
\Phi(s)=0 \quad \text{for } s\in[-1,1], \quad
\Phi(s)=1 \quad \text{for } s\notin[-2,2], \quad
0\le \Phi(s)\le 1 \quad \text{for all } s\in\mathbb R.
\end{equation}
It can be verified that one can choose such a function $\eta$ so that
$\|\Phi'\|_\infty\le 2$.
For $\varepsilon>0$, define
\[
\Phi_\varepsilon(s):=\Phi(s/\varepsilon^2),
\quad \forall s\in\mathbb R.
\]

We split the integral as follows:
\[
\int_{\mathbb R^k}(\partial_\theta\varphi)(f)\varrho\,dx
=
\int_{\mathbb R^k}
\bigl(1-\Phi_\varepsilon\bigl((\det J_f)^2\bigr)\bigr)
(\partial_\theta\varphi)(f)\varrho\,dx
+
\int_{\mathbb R^k}
\Phi_\varepsilon\bigl((\det J_f)^2\bigr)
(\partial_\theta\varphi)(f)\varrho\,dx.
\]
For the first term, using $\|\partial_\theta\varphi\|_\infty\le 1$, we have
\[
\biggl|
\int_{\mathbb R^k}
\bigl(1-\Phi_\varepsilon\bigl((\det J_f)^2\bigr)\bigr)
(\partial_\theta\varphi)(f)\varrho\,dx
\biggr|
\le
\int_{\mathbb R^k}I_{\{(\det J_f)^2\le 2\varepsilon^2\}}\varrho\,dx.
\]

We now consider the second term. For $x\in\mathbb R^k$ such that
$\det J_f(x)\ne0$, using
\[
A_f=(\det J_f)J_f^{-1},
\]
we obtain
\[
(\partial_\theta\varphi)(f)\det J_f
=
\langle \nabla(\varphi\circ f),A_f\theta\rangle.
\]
By the Piola identity applied to the cofactor matrix, see, for instance,
\cite[Section~8.1.4(b)]{EvansPDE} or \cite{KSh19}, the columns of $A_f$ are divergence-free:
\begin{equation}\label{eq-Piola}
\sum_{i=1}^k \partial_{x_i}(A_f)_{ij}=0
\quad \forall j\in\{1,\ldots,k\}.
\end{equation}

Set
\[
\Psi_\varepsilon(u):=
\begin{cases}
\dfrac{\Phi_\varepsilon(u^2)}{u}, & u\ne 0,\\[1ex]
0, & u=0.
\end{cases}
\]
The function $\Psi_\varepsilon$ is smooth and bounded, because
$\Phi_\varepsilon(u^2)=0$ for $|u|\le \varepsilon$.
Thus,
\begin{align*}
\int_{\mathbb R^k}
\Phi_\varepsilon\bigl((\det J_f)^2\bigr)(\partial_\theta\varphi)(f)\varrho\,dx
&=
\int_{\mathbb R^k}
\Psi_\varepsilon(\det J_f)
\bigl((\partial_\theta\varphi)(f)\det J_f\bigr)\varrho\,dx
\\
&=
\int_{\mathbb R^k}
\Psi_\varepsilon(\det J_f)
\langle \nabla(\varphi\circ f),A_f\theta\rangle\varrho\,dx.
\end{align*}
Since $\varrho$ is compactly supported, integration by parts gives
\begin{align*}
\int_{\mathbb R^k}
\Psi_\varepsilon(\det J_f)
\langle \nabla(\varphi\circ f),A_f\theta\rangle\varrho\,dx
&=
-\int_{\mathbb R^k}
\varphi(f)\,
\operatorname{div}\bigl(\Psi_\varepsilon(\det J_f)\varrho A_f\theta\bigr)\,dx.
\end{align*}
Expanding the divergence and using \eqref{eq-Piola}, we obtain
\begin{align*}
\int_{\mathbb R^k}
\Phi_\varepsilon\bigl((\det J_f)^2\bigr)(\partial_\theta\varphi)(f)\varrho\,dx
&=
-\int_{\mathbb R^k}
\varphi(f)\Psi_\varepsilon'(\det J_f)
\langle \nabla\det J_f,A_f\theta\rangle\varrho\,dx
\\
&\quad
-\int_{\mathbb R^k}
\varphi(f)\Psi_\varepsilon(\det J_f)
\langle \nabla\varrho,A_f\theta\rangle\,dx.
\end{align*}	
Now, by the definitions of $\Psi_\varepsilon$ and $\Phi_\varepsilon$, we have
\[
|\Psi_\varepsilon(u)|
\le \varepsilon^{-1},
\quad \forall u\in\mathbb R.
\]
Indeed, $\Psi_\varepsilon(u)=0$ for $|u|\le\varepsilon$, while
$|\Psi_\varepsilon(u)|\le |u|^{-1}$ for $|u|>\varepsilon$. Hence,
\[
\biggl|
\int_{\mathbb R^k}
\varphi(f)\Psi_\varepsilon(\det J_f)
\langle \nabla\varrho,A_f\theta\rangle\,dx
\biggr|
\le
t\varepsilon^{-1}
\int_{\mathbb R^k}
\bigl|\langle \nabla\varrho,A_f\theta\rangle\bigr|\,dx.
\]
Next, for $u\ne0$,
\[
\Psi_\varepsilon'(u)
=
2\Phi_\varepsilon'(u^2)
-
\frac{\Phi_\varepsilon(u^2)}{u^2}.
\]
Since
\[
\Phi_\varepsilon'(u^2)
=
\varepsilon^{-2}\Phi'(u^2/\varepsilon^2),
\]
and since $\Phi'$ is supported in $\{1\le |s|\le 2\}$, we obtain
\[
\bigl|\Psi_\varepsilon'(u)\bigr|
\le
4\varepsilon^{-2}I_{\{2\varepsilon^2\ge u^2\ge \varepsilon^2\}}
+
u^{-2}I_{\{u^2\ge \varepsilon^2\}}.
\]
Using $\varepsilon^{-2}\le 2u^{-2}$ on the set
$\{2\varepsilon^2\ge u^2\ge \varepsilon^2\}$, we obtain
\[
\bigl|\Psi_\varepsilon'(u)\bigr|
\le
9u^{-2}I_{\{u^2\ge \varepsilon^2\}}.
\]
Finally, on $\{u^2\ge\varepsilon^2\}$ one has
\[
u^{-2}\le \frac{2}{u^2+\varepsilon^2}.
\]
Therefore,
\[
\bigl|\Psi_\varepsilon'(u)\bigr|
\le
\frac{18}{u^2+\varepsilon^2},
\quad \forall u\in\mathbb R.
\]
Consequently,
\[
\biggl|
\int_{\mathbb R^k}
\varphi(f)\Psi_\varepsilon'(\det J_f)
\langle \nabla\det J_f,A_f\theta\rangle\varrho\,dx
\biggr|
\le
18t
\int_{\mathbb R^k}
\frac{\bigl|\langle \nabla\det J_f,A_f\theta\rangle\bigr|}
{(\det J_f)^2+\varepsilon^2}
\varrho\,dx.
\]
Combining the preceding estimates proves the claimed bound.
\end{proof}

\begin{definition}
We say that a nonnegative function $\varrho$ on $\mathbb R$ is unimodal if
there exists $a\in\mathbb R$ such that $\varrho$ is nondecreasing on
$(-\infty,a]$ and nonincreasing on $[a,\infty)$.	
\end{definition}	

For our purposes, it is important that every integrable
log-concave function on $\mathbb R$ is unimodal.

We will use the following integration-by-parts estimate for unimodal integrable functions.

\begin{lemma}\label{lem-deriv-est}
Let $\varrho\in C^\infty(\mathbb R)$ be a nonnegative, integrable,
unimodal function on $\mathbb R$. Then, for every $d\in\mathbb N$ and every
$f\in \mathcal P_d(\mathbb R)\cap L^1(\varrho\,ds)$, one has
\[
\int_{\mathbb R}|f(s)|\,|\varrho'(s)|\,ds
\le
2\sup_{s\in\mathbb R}|f(s)\varrho(s)|
+
\int_{\mathbb R}|f'(s)|\varrho(s)\,ds.
\]
\end{lemma}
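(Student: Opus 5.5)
The plan is to split $\mathbb R$ at the mode $a$ of $\varrho$ and integrate by parts on each half-line, using that $\varrho'$ has constant sign there. On $(-\infty,a]$ we have $\varrho'\ge0$, and on $[a,\infty)$ we have $\varrho'\le0$. Hence
\[
\int_{\mathbb R}|f|\,|\varrho'|\,ds=\int_{-\infty}^a|f|\varrho'\,ds-\int_a^\infty|f|\varrho'\,ds .
\]
If $\sup|f\varrho|=\infty$, there is nothing to prove, so we assume $M:=\sup_s|f(s)\varrho(s)|<\infty$.

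Since $f$ is a polynomial, $|f|$ is locally Lipschitz and differentiable away from the finitely many zeros of $f$, with $\bigl||f|'\bigr|=|f'|$ almost everywhere. Integrating by parts on $[R_1,a]$ gives
\[
\int_{R_1}^a|f|\varrho'\,ds=|f(a)|\varrho(a)-|f(R_1)|\varrho(R_1)-\int_{R_1}^a|f|'\varrho\,ds\le M+\int_{R_1}^a|f'|\varrho\,ds .
\]
This is legitimate because $|f|\varrho$ is absolutely continuous, and the boundary term at $R_1$ is nonpositive, so it can be dropped. The same argument on $[a,R_2]$ gives
\[
-\int_a^{R_2}|f|\varrho'\,ds\le |f(a)|\varrho(a)+\int_a^{R_2}|f'|\varrho\,ds\le M+\int_a^{R_2}|f'|\varrho\,ds .
\]
Adding the two bounds yields a total boundary contribution of at most $2M$.

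Finally, we let $R_1\to-\infty$ and $R_2\to\infty$ using monotone convergence. The integrands on the left are nonnegative, and the right-hand side is bounded by $2M+\int|f'|\varrho$ for every truncation. Since we only need the inequality, we never have to show that the boundary terms vanish, and the integrability assumption on $f$ is not needed for this step.

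The only delicate point is justifying integration by parts with $|f|$, which is not smooth. This is handled by splitting $[R_1,a]$ and $[a,R_2]$ at the finitely many zeros of $f$, on each piece of which $|f|=\pm f$ is smooth. The interior boundary terms at the zeros vanish, so the argument goes through unchanged.
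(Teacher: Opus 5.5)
Your proof is correct and follows essentially the paper's argument. You split at the mode $a$, integrate by parts on the pieces where $f$ has constant sign, and collect a boundary contribution $|f(a)|\varrho(a)$ from each side. The only difference is that you truncate sharply at $R_1,R_2$ and drop the outer boundary terms $-|f(R_i)|\varrho(R_i)\le 0$, whereas the paper uses a smooth cutoff $\eta_r$ and needs the hypothesis $f\in L^1(\varrho\,ds)$ to kill the error term $r^{-1}\|\eta'\|_\infty\int|f|\varrho\,ds$; your version shows that hypothesis is not actually needed.
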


\begin{proof}
If $f\equiv 0$, there is nothing to prove. 
Let $\eta\in C_0^\infty(\mathbb R)$
be a function such that
\begin{equation}\label{eq-eta}
\eta(s)=1 \quad \text{for } s\in[-1,1], \quad
\eta(s)=0 \quad \text{for } s\notin[-2,2], \quad
0\le \eta(s)\le 1 \quad \text{for all } s\in\mathbb R,
\end{equation}
and let
$\eta_r(s):=\eta(s/r)$.

Let $a\in\mathbb R$ be such that $\varrho$ is nondecreasing on
$(-\infty,a]$ and nonincreasing on $[a,\infty)$. Hence
\[
\varrho'(s)\ge 0 \quad \text{for } s<a
\quad\text{and}\quad
\varrho'(s)\le 0 \quad \text{for } s>a.
\]
Let $b_1,\ldots,b_m$ be the distinct real zeros of $f$. On each connected
component of
\[
(-\infty,a)\setminus\{b_1,\ldots,b_m\}
\]
the polynomial $f$ has a constant sign. Denote these components by
$I_\ell^-$. For each such interval, choose $\sigma_\ell^-\in\{-1,1\}$ so
that
\[
|f(s)|=\sigma_\ell^- f(s)
\quad \forall s\in I_\ell^-.
\]
Since $\varrho'\ge0$ on $(-\infty,a)$, we have
\[
\int_{-\infty}^a |f(s)|\,|\varrho'(s)|\eta_r(s)\,ds
=
\sum_\ell
\int_{I_\ell^-}\sigma_\ell^- f(s)\varrho'(s)\eta_r(s)\,ds.
\]
Integrating by parts on each interval $I_\ell^-$ and summing over $\ell$,
all boundary terms at zeros of $f$ vanish. The boundary term at $-\infty$
vanishes because $\eta_r$ is compactly supported. Thus the only possible
remaining boundary term is at $a$, and therefore
\[
\int_{-\infty}^a |f(s)|\,|\varrho'(s)|\eta_r(s)\,ds
\le
|f(a)|\varrho(a)\eta_r(a)
+
\int_{-\infty}^a |f'(s)|\varrho(s)\eta_r(s)\,ds
+
\int_{-\infty}^a |f(s)|\varrho(s)|\eta_r'(s)|\,ds.
\]
Similarly, on each connected component of
\[
(a,\infty)\setminus\{b_1,\ldots,b_m\}
\]
the polynomial $f$ has a constant sign. Since $\varrho'\le0$ on
$(a,\infty)$, the same integration by parts gives
\[
\int_a^\infty |f(s)|\,|\varrho'(s)|\eta_r(s)\,ds
\le
|f(a)|\varrho(a)\eta_r(a)
+
\int_a^\infty |f'(s)|\varrho(s)\eta_r(s)\,ds
+
\int_a^\infty |f(s)|\varrho(s)|\eta_r'(s)|\,ds.
\]
Adding the two estimates and using
\[
|\eta_r'(s)|\le r^{-1}\|\eta'\|_\infty
\]
we obtain
\[
\int_{\mathbb R}|f(s)|\,|\varrho'(s)|\eta_r(s)\,ds
\le
2\sup_{s\in\mathbb R}|f(s)|\varrho(s)
+
\int_{\mathbb R}|f'(s)|\varrho(s)\,ds
+
r^{-1}\|\eta'\|_\infty
\int_{\mathbb R}|f(s)|\varrho(s)\,ds.
\]
Letting $r\to\infty$ and applying Fatou's lemma proves the claimed estimate.
\end{proof}

\begin{lemma}\label{key-lemma-2}
Let $k,d\in\mathbb N$, let $\theta\in\mathbb R^k$ satisfy $|\theta|=1$, and
let $\varrho\in C^\infty(\mathbb R^k)$ be a nonnegative integrable
log-concave function on $\mathbb R^k$. Let
$
f\in\mathcal P_d(\mathbb R^k;\mathbb R^k),
$
let $J_f$ be the Jacobian matrix of $f$, let
$
A_f:=\operatorname{adj}(J_f),
$
and set
\[
Q_j(x):=\langle A_f(x)\theta,e_j\rangle,
\quad \forall j\in\{1,\ldots,k\}.
\]
Then, for every $\varepsilon>0$, every $t>0$, and every
$\varphi\in C_b^\infty(\mathbb R^k)$ satisfying
\[
\|\varphi\|_\infty\le t,
\quad
\|\partial_\theta\varphi\|_\infty\le 1,
\]
one has
\begin{align*}
\biggl|
\int_{\mathbb R^k}(\partial_\theta\varphi)(f)\varrho\,dx
\biggr|
&\le
\int_{\mathbb R^k}
I_{\{(\det J_f)^2\le 2\varepsilon^2\}}\varrho\,dx
+
t\varepsilon^{-1}
\sum_{j=1}^k
\int_{\mathbb R^k}|\partial_{x_j}Q_j|\varrho\,dx
\\
&\quad
+
(2+18\pi kd)t\varepsilon^{-1}
\sum_{j=1}^k
\int_{\mathbb R^{k-1}}
\sup_{x_j\in\mathbb R}|Q_j(x)\varrho(x)|\,d\hat{x}^j,
\end{align*}
where
\[
\hat{x}^j:=(x_1,\ldots,x_{j-1},x_{j+1},\ldots,x_k)
\in\mathbb R^{k-1},
\quad j=1,\ldots,k.
\]
\end{lemma}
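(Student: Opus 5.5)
The plan is to deduce the statement from Lemma~\ref{key-lemma-1} and then estimate its last two terms line by line. On each line the restriction of $\varrho$ is log-concave, hence unimodal, and the relevant functions are polynomials of controlled degree in that one variable.

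\emph{Removing compact support.} Lemma~\ref{key-lemma-1} assumes $\varrho\in C_0^\infty$, so I first apply it to $\varrho_r:=\varrho\,\eta_r$, where $\eta_r(x)=\eta(|x|/r)$ is the cutoff from \eqref{eq-eta}. Then $\nabla\varrho_r=\eta_r\nabla\varrho+\varrho\nabla\eta_r$, so the middle term acquires an extra piece bounded by $t\varepsilon^{-1}r^{-1}\|\eta'\|_\infty\int|A_f\theta|\varrho\,dx$. An integrable log-concave function on $\mathbb R^k$ decays exponentially, so polynomials are $\varrho$-integrable. Hence this piece tends to $0$ as $r\to\infty$, and all remaining terms converge by monotone or dominated convergence. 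If a right-hand side is infinite there is nothing to prove. It therefore suffices to bound the right-hand side of Lemma~\ref{key-lemma-1} with $\varrho$ itself, with $\nabla\varrho$ in place of $\nabla\varrho_r$.

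\emph{The $\nabla\varrho$ term.} Since $\langle\nabla\varrho,A_f\theta\rangle=\sum_j Q_j\partial_{x_j}\varrho$, it is enough to bound $\int|Q_j||\partial_{x_j}\varrho|\,dx$ for each $j$. I fix $\hat x^j$ and consider the function $x_j\mapsto\varrho(x)$. It is a smooth, integrable, log-concave function of one variable, hence unimodal, and $x_j\mapsto Q_j(x)$ is a polynomial, of degree at most $(k-1)(d-1)$. Lemma~\ref{lem-deriv-est} then gives
\[
\int_{\mathbb R}|Q_j||\partial_{x_j}\varrho|\,dx_j\le 2\sup_{x_j}|Q_j\varrho|+\int_{\mathbb R}|\partial_{x_j}Q_j|\varrho\,dx_j .
\]
Integrating in $\hat x^j$ and multiplying by $t\varepsilon^{-1}$ produces the second term of the claim and the summand $2t\varepsilon^{-1}\sum_j\int\sup_{x_j}|Q_j\varrho|\,d\hat x^j$.

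\emph{The $\nabla\det J_f$ term.} Write $D:=\det J_f$, so that $\langle\nabla D,A_f\theta\rangle=\sum_jQ_j\partial_{x_j}D$. Then
\[
\int\frac{|Q_j\partial_{x_j}D|}{D^2+\varepsilon^2}\varrho\,dx\le\int_{\mathbb R^{k-1}}\sup_{x_j}|Q_j\varrho|\int_{\mathbb R}\frac{|\partial_{x_j}D|}{D^2+\varepsilon^2}\,dx_j\,d\hat x^j .
\]
For fixed $\hat x^j$, $D$ is a polynomial in $x_j$ of degree at most $k(d-1)$. If it is constant in $x_j$, the inner integral vanishes. Otherwise $\partial_{x_j}D$ has at most $k(d-1)-1$ real zeros, so $\mathbb R$ splits into at most $k(d-1)\le kd$ intervals on which $D$ is monotone. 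On each such interval the substitution $u=D$ gives
\[
\int\frac{|dD|}{D^2+\varepsilon^2}\le\int_{\mathbb R}\frac{du}{u^2+\varepsilon^2}=\frac{\pi}{\varepsilon}.
\]
Hence the inner integral is at most $\pi kd\,\varepsilon^{-1}$. Multiplying by $18t$ and adding the earlier contribution $2t\varepsilon^{-1}$ yields the constant $(2+18\pi kd)$.

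The main obstacle is this last step. The crude bound $|\partial_{x_j}D|/(D^2+\varepsilon^2)\le|\partial_{x_j}D|\varepsilon^{-2}$ loses a factor of $\varepsilon$, and the monotone-pieces counting is exactly what recovers the correct power $\varepsilon^{-1}$ uniformly in $\hat x^j$. The approximation step is routine once the exponential decay of $\varrho$ is used.
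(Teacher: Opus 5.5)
Your proposal is correct and follows the paper's proof essentially step for step. You cut off with $\eta_r$ and apply Lemma~\ref{key-lemma-1}, treat the $\nabla\varrho$ term line by line via Lemma~\ref{lem-deriv-est}, and bound the $\nabla\det J_f$ term by splitting each line into at most $kd$ monotonicity intervals, each contributing at most $\pi\varepsilon^{-1}$ (the paper phrases this through $\arctan(\varepsilon^{-1}\det J_f)$, and uses $0\le\eta_r\le 1$ directly rather than passing to the limit termwise).
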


\begin{proof}
Let $\eta\in C_0^\infty(\mathbb R)$ be the function given in
\eqref{eq-eta}, and set
\[
\eta_r(x):=\eta(|x|/r)\in C^\infty_0(\mathbb{R}^k).
\]
By Lemma~\ref{key-lemma-1}, we obtain
\begin{align}\label{eq-lem-est}
\biggl|
\int_{\mathbb R^k}(\partial_\theta\varphi)(f)\varrho\eta_r\,dx
\biggr|
&\le
\int_{\mathbb R^k}
I_{\{(\det J_f)^2\le 2\varepsilon^2\}}\varrho\eta_r\,dx
\\
&\quad
+
t\varepsilon^{-1}
\int_{\mathbb R^k}
\bigl|\langle\nabla(\varrho\eta_r),A_f\theta\rangle\bigr|\,dx
+
18t
\int_{\mathbb R^k}
\frac{\bigl|\langle\nabla\det J_f,A_f\theta\rangle\bigr|}
{(\det J_f)^2+\varepsilon^2}
\varrho\eta_r\,dx.
\notag
\end{align}

We first estimate the term containing $\nabla(\varrho\eta_r)$. Since
\[
\nabla(\varrho\eta_r)=\eta_r\nabla\varrho+\varrho\nabla\eta_r
\]
and
\[
|\nabla\eta_r(x)|\le r^{-1}\|\eta'\|_\infty,
\]
we have
\begin{equation}\label{eq-grad-rho-eta}
\int_{\mathbb R^k}
\bigl|\langle\nabla(\varrho\eta_r),A_f\theta\rangle\bigr|\,dx
\le
\sum_{j=1}^k
\int_{\mathbb R^k}
|\partial_{x_j}\varrho|\,|Q_j|\eta_r\,dx
+
r^{-1}\|\eta'\|_\infty
\int_{\mathbb R^k}|A_f\theta|\varrho\,dx.
\end{equation}
For each $j$, by Fubini's theorem and by Lemma~\ref{lem-deriv-est} applied,
for fixed $\hat x^j$, to the one-dimensional unimodal integrable function
$x_j\mapsto\varrho(x)$, we obtain
\begin{align*}
\int_{\mathbb R^k}
|\partial_{x_j}\varrho|\,|Q_j|\eta_r\,dx
&\le
\int_{\mathbb R^k}
|\partial_{x_j}\varrho|\,|Q_j|\,dx
=
\int_{\mathbb R^{k-1}}
\int_{\mathbb R}
|\partial_{x_j}\varrho|\,|Q_j|\,dx_j\,d\hat{x}^j
\\
&\le
2\int_{\mathbb R^{k-1}}
\sup_{x_j\in\mathbb R}|Q_j(x)\varrho(x)|\,d\hat{x}^j
+
\int_{\mathbb R^k}|\partial_{x_j}Q_j|\varrho\,dx.
\end{align*}
Substituting this into \eqref{eq-grad-rho-eta}, we get
\begin{align}\label{eq-grad-term-est}
\int_{\mathbb R^k}
\bigl|\langle\nabla(\varrho\eta_r),A_f\theta\rangle\bigr|\,dx
&\le
2\sum_{j=1}^k
\int_{\mathbb R^{k-1}}
\sup_{x_j\in\mathbb R}|Q_j(x)\varrho(x)|\,d\hat{x}^j
\\
&
+
\sum_{j=1}^k
\int_{\mathbb R^k}|\partial_{x_j}Q_j|\varrho\,dx
+
r^{-1}\|\eta'\|_\infty
\int_{\mathbb R^k}|A_f\theta|\varrho\,dx.
\notag
\end{align}

We now estimate the term containing $\nabla\det J_f$. Since
\[
\langle\nabla\det J_f,A_f\theta\rangle
=
\sum_{j=1}^k Q_j\, \partial_{x_j}\det J_f,
\]
and $0\le \eta_r\le 1$, it is sufficient to estimate, for each fixed $j$,
\[
\int_{\mathbb R^k}
\frac{|\partial_{x_j}\det J_f|}
{(\det J_f)^2+\varepsilon^2}
|Q_j|\varrho\,dx.
\]
By Fubini's theorem,
\begin{align*}
\int_{\mathbb R^k}
\frac{|\partial_{x_j}\det J_f|}
{(\det J_f)^2+\varepsilon^2}
|Q_j|\varrho\,dx
&=
\int_{\mathbb R^{k-1}}
\int_{\mathbb R}
\frac{|\partial_{x_j}\det J_f|}
{(\det J_f)^2+\varepsilon^2}
|Q_j|\varrho\,dx_j\,d\hat{x}^j
\\
&
\le
\int_{\mathbb R^{k-1}}
\biggl(
\int_{\mathbb R}
\frac{|\partial_{x_j}\det J_f|}
{(\det J_f)^2+\varepsilon^2}\,dx_j
\biggr)
\sup_{x_j\in\mathbb R}|Q_j(x)\varrho(x)|\,d\hat{x}^j.
\end{align*}
Fix $\hat{x}^j$. The function $x_j\mapsto\det J_f(x)$ is a polynomial of
degree at most $k(d-1)\le kd$. Hence $x_j\mapsto\partial_{x_j}\det J_f(x)$
has at most $kd$ intervals of constant sign. On each such interval $I_\ell$,
\[
\int_{I_\ell}
\frac{|\partial_{x_j}\det J_f|}
{(\det J_f)^2+\varepsilon^2}\,dx_j
=
\biggl|
\int_{I_\ell}
\frac{\partial_{x_j}\det J_f}
{(\det J_f)^2+\varepsilon^2}\,dx_j
\biggr|
=
\varepsilon^{-1}
\biggl|
\int_{I_\ell}
\partial_{x_j}\arctan(\varepsilon^{-1}\det J_f)\,dx_j
\biggr|
\le
\pi\varepsilon^{-1}.
\]
Therefore,
\[
\int_{\mathbb R}
\frac{|\partial_{x_j}\det J_f|}
{(\det J_f)^2+\varepsilon^2}\,dx_j
\le
\pi kd\,\varepsilon^{-1}.
\]
Consequently,
\[
\int_{\mathbb R^k}
\frac{|\partial_{x_j}\det J_f|}
{(\det J_f)^2+\varepsilon^2}
|Q_j|\varrho\,dx
\le
\pi kd\,\varepsilon^{-1}
\int_{\mathbb R^{k-1}}
\sup_{x_j\in\mathbb R}|Q_j(x)\varrho(x)|\,d\hat{x}^j.
\]
Summing over $j$ yields
\begin{equation}\label{eq-det-term-est}
\int_{\mathbb R^k}
\frac{\bigl|\langle\nabla\det J_f,A_f\theta\rangle\bigr|}
{(\det J_f)^2+\varepsilon^2}
\varrho\eta_r\,dx
\le
\pi kd\,\varepsilon^{-1}
\sum_{j=1}^k
\int_{\mathbb R^{k-1}}
\sup_{x_j\in\mathbb R}|Q_j(x)\varrho(x)|\,d\hat{x}^j.
\end{equation}

Combining the estimates \eqref{eq-lem-est},
\eqref{eq-grad-term-est}, and \eqref{eq-det-term-est}, and taking into account that $0\le \eta_r\le 1$, we obtain
\begin{align*}
\biggl|
\int_{\mathbb R^k}(\partial_\theta\varphi)(f)\varrho\eta_r\,dx
\biggr|
&\le
\int_{\mathbb R^k}
I_{\{(\det J_f)^2\le 2\varepsilon^2\}}\varrho\,dx
\\
&
+
t\varepsilon^{-1}
\sum_{j=1}^k
\int_{\mathbb R^k}|\partial_{x_j}Q_j|\varrho\,dx
+
(2+18\pi kd)t\varepsilon^{-1}
\sum_{j=1}^k
\int_{\mathbb R^{k-1}}
\sup_{x_j\in\mathbb R}|Q_j(x)\varrho(x)|\,d\hat{x}^j
\\
&
+
r^{-1}\|\eta'\|_\infty t\varepsilon^{-1}
\int_{\mathbb R^k}|A_f\theta|\varrho\,dx.
\end{align*}
Letting $r\to\infty$ gives the claimed estimate. \end{proof}

\subsection{Dimension-dependent regularity estimate}

We will use the following two results from~\cite{Kos21}. The first one is a
Markov--Bernstein-type inequality for directional derivatives.

\begin{theorem}[{\cite[Theorem~3.1]{Kos21}}]\label{th-MB}
For each $d\in\mathbb N$ there exists a constant $C(d)>0$ such that,
for every $n\in\mathbb N$, every absolutely continuous log-concave measure
$\mu$ on $\mathbb R^n$ with density $\varrho$, every
$g\in\mathcal P_d(\mathbb R^n)$, and every unit vector
$h\in\mathbb R^n$, one has
\[
\|\partial_h g\|_{L^2(\mu)}
\le
C(d)
\biggl(
\int_{\langle h\rangle^\perp}
\sup_{s\in\mathbb R}\varrho(x+sh)\,dx
\biggr)
\|g\|_{L^2(\mu)}.
\]
\end{theorem}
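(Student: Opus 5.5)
The plan is to slice $\mathbb R^n$ along lines parallel to $h$, prove a one-dimensional Markov--Bernstein inequality with respect to log-concave weights on each line, and then aggregate over the orthogonal complement.

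Write $x\in\langle h\rangle^\perp$ and set $q_x(s):=\varrho(x+sh)$ and $p_x(s):=g(x+sh)$. Each $q_x$ is an integrable log-concave, hence unimodal, function on $\mathbb R$, and $p_x\in\mathcal P_d(\mathbb R)$ with $p_x'(s)=(\partial_h g)(x+sh)$. Put
\[
M_x:=\sup_s q_x(s),\qquad m_x:=\int_{\mathbb R} q_x\,ds,\qquad F(x):=\int_{\mathbb R}p_x^2q_x\,ds .
\]
By Fubini, $\|\partial_h g\|_{L^2(\mu)}^2=\int_{\langle h\rangle^\perp}\int_{\mathbb R}(p_x')^2q_x\,ds\,dx$ and $\|g\|_{L^2(\mu)}^2=\int F(x)\,dx$, so the whole task is to bound the fiber integrals and then reassemble them.

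The first step is the one-dimensional estimate. A log-concave $q_x$ is comparable, up to absolute constants, to $M_x$ on an interval $I_x$ of length $\asymp m_x/M_x$ containing its mode, and it decays at least exponentially at that scale outside $I_x$. The classical Markov and Remez inequalities for polynomials of degree $d$ on intervals, together with this exponential tail control, give
\[
\int_{\mathbb R}(p_x')^2q_x\,ds\le C(d)\Bigl(\frac{M_x}{m_x}\Bigr)^2F(x),
\qquad
\sup_s p_x(s)^2q_x(s)\le C(d)\,\frac{M_x}{m_x}\,F(x).
\]
The second inequality is a Nikolskii-type bound. Combining the two, and writing $(p_x')^2q_x\le \|p_x'^2q_x\|_\infty$ on the effective support, I would aim at the mixed fiberwise bound
\[
\int(p_x')^2q_x\,ds\le C(d)\,M_x\sup_s p_x(s)^2q_x(s)\cdot\frac{M_x}{m_x}.
\]
Here only one factor of $M_x$ is meant to survive into the final sum.

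The second step, which I expect to be the main obstacle, is aggregation. I need to trade $\int (M_x/m_x)^2F(x)\,dx$ for $\bigl(\int M_x\,dx\bigr)^2\int F\,dx$, and the ratio $M_x/m_x$ is not uniformly bounded in $x$. My first attempt would exploit that $x\mapsto m_x$ is a log-concave density on $\langle h\rangle^\perp$ (Prékopa) and that $F(x)/m_x$ is comparable, by Remez on $I_x$, to a maximum of squares of degree-$d$ polynomials in $x$ on a log-concave family of sets. This should permit dimension-free reverse Hölder and moment comparison (Carbery--Wright, Bourgain) for the weights $F/m$ against $m\,dx$. One then applies Hölder with exponent $r$ slightly above $1$ and tries to bound $\int M_x^{2r}m_x^{1-2r}\,dx$ using log-concavity of $x\mapsto M_x$ (a sup over lines of a log-concave function is again log-concave) together with the Fradelizi-type inequality $M_x\ge c\,\|q_x\|_\infty$ relating maxima and marginals.

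If this does not close, the fallback is to localize first. By the Fradelizi--Guédon/Lovász--Simonovits localization, reduce to measures whose density in the $\langle h\rangle^\perp$ directions lives on a low-dimensional needle. There $M_x/m_x$ is essentially constant along the relevant family of fibers, and the reassembly becomes a one-line Cauchy--Schwarz. In either route, the constant depends only on $d$, because every polynomial estimate used is one-dimensional or dimension-free.
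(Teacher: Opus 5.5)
\textbf{There is a genuine gap in the aggregation step.} Your one-dimensional fiber estimates are correct. The trouble is that the quantity you propose to control is infinite in simple examples, so Route~1 cannot close.

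Take $\mu$ uniform on the triangle $T=\{(x,s)\colon 0\le x\le 1,\ 0\le s\le 1-x\}$ and $h=e_2$. Then $M_x=2$, $m_x=2(1-x)$ and $\int M_x\,dx=2$. For $g(x,s)=1+s$ one has $F(x)\ge m_x$, hence
\[
\int_0^1\Bigl(\frac{M_x}{m_x}\Bigr)^2F(x)\,dx\ge\int_0^1\frac{2\,dx}{1-x}=\infty,
\]
while $\|\partial_h g\|_{L^2(\mu)}=1$ and the right-hand side of the theorem is finite.

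The fiberwise $L^2$ Markov bound has already lost everything on the short fibers near $x=1$. No reverse H\"older or moment comparison in $x$ can recover this; in the same example $\int M_x^{2r}m_x^{1-2r}\,dx=\infty$ for every $r\ge1$. Your ``mixed'' fiberwise bound also fails a scaling check: replacing $q_x$ by $\lambda q_x$ multiplies the left side by $\lambda$ and the right side by $\lambda^2$, so it cannot hold for all log-concave $q_x$.

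Route~2 does not repair this. The factor $\int_{\langle h\rangle^\perp}\sup_s\varrho(x+sh)\,dx$ is not of the form $\int u\,d\nu$ and is not weakly continuous. It also has no meaning for the lower-dimensional extremal measures produced by Theorem~\ref{loc-lem}, so it cannot be carried through the localization. Finally, $M_x/m_x$ is far from constant already on the two-dimensional triangle.

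\textbf{The missing idea} is a fiberwise estimate that involves only $M_x$ and no fiber length. Any $L^2$ or Nikolskii-type fiber bound must carry a factor $m_x^{-1}$ by scaling, but an $L^1$ variation bound on a sublevel set of $g$ does not. The polynomial $p_x$ is monotone on at most $d$ intervals. On each of them, $\{|p_x|\le t\}$ is an interval on which $p_x$ varies by at most $2t$. Hence $\int_{\{|p_x|\le t\}}|p_x'|q_x\,ds\le 2dt\,M_x$, and integrating over $x$ gives
\[
\int_{\{|g|\le t\}}|\partial_h g|\,d\mu\le 2dt\int_{\langle h\rangle^\perp}\sup_{s\in\mathbb R}\varrho(x+sh)\,dx .
\]

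Now take $t=2\|g\|_{L^2(\mu)}$, so that $\mu(|g|\le t)\ge 3/4$ by Chebyshev. The cross-fiber information you were trying to get from Pr\'ekopa or localization is supplied instead by the dimension-free Carbery--Wright inequality \eqref{CW-est}, applied to $\partial_h g\in\mathcal P_{d-1}(\mathbb R^n)$. Choose $\varepsilon=\varepsilon(d)$ with $\mu(|\partial_h g|\le\varepsilon\|\partial_h g\|_{L^2(\mu)})\le 1/4$. Then the set $\{|g|\le t\}\cap\{|\partial_h g|>\varepsilon\|\partial_h g\|_{L^2(\mu)}\}$ has measure at least $1/2$. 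Therefore
\[
\frac{\varepsilon}{2}\,\|\partial_h g\|_{L^2(\mu)}\le 4d\,\|g\|_{L^2(\mu)}\int_{\langle h\rangle^\perp}\sup_{s\in\mathbb R}\varrho(x+sh)\,dx,
\]
which is the theorem with $C(d)=8d/\varepsilon(d)$.

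The paper does not reprove this statement; it quotes it from earlier work. So there is no in-paper proof to compare against, but a short argument of this kind closes, whereas your proposal as written does not.
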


\begin{remark}\label{rm-variance}
Since partial derivatives are unchanged when a constant is added to $g$, one may
apply Theorem~\ref{th-MB} to 
\[
g-\int_{\mathbb R^n}g\,d\mu.
\] 
Hence the
$L^2(\mu)$ norm on the right-hand side can be replaced by
$\sqrt{\operatorname{Var}_\mu(g)}$.
\end{remark}

The second result is a section estimate for polynomials with respect to
isotropic log-concave measures.

\begin{theorem}[{\cite[Lemma~3.2]{Kos21}}]\label{th-section}
For every $n,d\in\mathbb N$ there exists a constant $c(d,n)>0$ such that,
for every isotropic log-concave measure $\mu$ on $\mathbb R^n$ with density
$\varrho$, every $g\in\mathcal P_d(\mathbb R^n)$, and every unit vector
$h\in\mathbb R^n$, one has
\[
\int_{\langle h\rangle^\perp}
\sup_{s\in\mathbb R}
|g(x+sh)\varrho(x+sh)|\,dx
\le
c(d,n)\|g\|_{L^1(\mu)}.
\]
\end{theorem}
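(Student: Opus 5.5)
We will prove this by a compactness argument that uses only two density bounds and the equivalence of norms on the finite-dimensional space $\mathcal P_d(\mathbb R^n)$. The dependence of $c(d,n)$ on $n$ is allowed, so no genuinely one-dimensional log-concave analysis along the lines $x+\mathbb Rh$ is needed. We need the following two facts about an isotropic log-concave density $\varrho$ on $\mathbb R^n$, both with constants depending only on $n$.
\begin{enumerate}
\item The upper bound \eqref{eq-exp-bound}: $\varrho(x)\le c_1(n)e^{-c_2(n)|x|}$. This was already used in the proof of Lemma~\ref{lem-lorentz-dimensional} and comes from \cite{Klartag07,LV07}.
\item A lower bound near the barycenter: there are $r(n),c_3(n)>0$ such that $\varrho\ge c_3(n)$ on the ball $B_{r(n)}$ centered at $0$. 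This is also one of the standard bounds for isotropic log-concave densities in \cite[Theorem~5.14]{LV07}.
\end{enumerate}

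Fix any norm on $\mathcal P_d(\mathbb R^n)$, say the maximum of the absolute values of the coefficients, and denote it by $\|g\|_*$. We compare both sides of the claimed inequality with $\|g\|_*$.

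For the right-hand side, the lower bound (2) gives
\[
\|g\|_{L^1(\mu)}\ge c_3(n)\int_{B_{r(n)}}|g|\,dx .
\]
The functional $g\mapsto\int_{B_{r(n)}}|g|\,dx$ is a norm on $\mathcal P_d(\mathbb R^n)$, because a nonzero polynomial cannot vanish on a ball. All norms on this finite-dimensional space are equivalent, so
\[
\|g\|_{L^1(\mu)}\ge c_4(d,n)\|g\|_* .
\]

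For the left-hand side, write $x\in\langle h\rangle^\perp$ and $s\in\mathbb R$. Since $x\perp h$, we have $|x+sh|^2=|x|^2+s^2$. The coefficient bound gives $|g(y)|\le C(d,n)\|g\|_*(1+|y|)^d$. Combining this with (1),
\[
\sup_{s}|g(x+sh)\varrho(x+sh)|
\le C(d,n)\|g\|_*\sup_{s}(1+|x|+|s|)^d e^{-c_2(n)\sqrt{|x|^2+s^2}}
\le C'(d,n)\|g\|_*e^{-c_2(n)|x|/2}.
\]
Here the factor $e^{-c_2(n)\sqrt{|x|^2+s^2}/2}$ absorbs the polynomial growth in $|s|$ uniformly, and the remaining factor $e^{-c_2(n)\sqrt{|x|^2+s^2}/2}\le e^{-c_2(n)|x|/2}$. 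Integrating over $\langle h\rangle^\perp\cong\mathbb R^{n-1}$ shows that the left-hand side is at most $C''(d,n)\|g\|_*$. Combining with the previous paragraph proves the claim with $c(d,n)=C''(d,n)/c_4(d,n)$.

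The main obstacle is justifying the uniform lower bound (2) for every isotropic log-concave density. If it is not available directly, we would argue as follows. The isotropic log-concave measures on $\mathbb R^n$ form a compact family in the weak topology; this uses the uniform exponential tails together with the fact that log-concavity and the moment conditions pass to limits. Every member of this family is absolutely continuous, since identity covariance forces full-dimensional support. The functional $(\mu,g)\mapsto\|g\|_{L^1(\mu)}$ is continuous and positive on the product of this family with the unit sphere of $\mathcal P_d(\mathbb R^n)$. Its minimum $c_4(d,n)$ is therefore positive, and this gives the lower bound on $\|g\|_{L^1(\mu)}$ without needing (2).
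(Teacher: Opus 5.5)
Your proof is correct and follows the paper's argument for $n=1$: the exponential bound \eqref{eq-exp-bound}, the lower bound $\varrho\ge c_1(n)I_{\{|x|\le 1/9\}}$ from \cite[Theorem~5.14]{LV07}, and equivalence of norms on $\mathcal P_d(\mathbb R^n)$. Since $c(d,n)$ may depend on $n$, it works verbatim for all $n$ (the paper instead cites \cite[Lemma~3.2]{Kos21} when $n\ge2$), and your compactness fallback is unnecessary because the uniform lower bound (2) is available directly and the paper uses it in Lemma~\ref{lem-compactness}.
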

For $n\ge2$, this is Lemma~3.2 in~\cite{Kos21}. For $n=1$, it follows by
combining \cite[Corollary~2.4]{Klartag07}, \cite[Lemma~5.14(a),(d)]{LV07},
and the equivalence of all norms on the finite-dimensional space of
polynomials of degree at most $d$.

\begin{lemma}\label{lem-dimensional}
Let $n,k,d\in\mathbb N$, $n\ge k$, and $d\ge2$. There exists a constant
$C(k,d,n)>0$, depending only on $k,d,n$, such that, for every isotropic
log-concave measure $\mu$ on $\mathbb R^n$, every
$f=(f_1,\ldots,f_k)\in\mathcal P_d(\mathbb R^n;\mathbb R^k)$, every unit
vector $\theta\in\mathbb R^k$, every $t>0$, and every
$\varphi\in C_b^\infty(\mathbb R^k)$ satisfying
\[
\|\varphi\|_\infty\le t,
\quad
\|\partial_\theta\varphi\|_\infty\le 1,
\]
one has
\[
\biggl(\int_{\mathbb R^n}\Delta_f\,d\mu\biggr)^{\frac{1}{2(k(d-1)+1)}}
\biggl|
\int_{\mathbb R^n}(\partial_\theta\varphi)(f)\,d\mu
\biggr|
\le
C(k,d,n)t^{\frac{1}{k(d-1)+1}}
\biggl(\sum_{j=1}^k\operatorname{Var}_\mu(f_j)\biggr)^{\frac{k-1}{2(k(d-1)+1)}}.
\]
\end{lemma}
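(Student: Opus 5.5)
We follow the scheme of Lemma~\ref{lem-lorentz-dimensional}, using Lemma~\ref{key-lemma-2} in place of Corollary~\ref{cor-lorentz}. By approximation we may assume that the density $\varrho$ of $\mu$ is smooth and positive; for instance, convolve $\mu$ with a small Gaussian. The resulting measure is still log-concave and nearly isotropic, and one passes to the limit at the end.

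Fix $H\in Gr(k,n)$ with an orthonormal basis $e_1,\ldots,e_k$, and write $x=y+z$ with $y\in H$ and $z\in H^\perp$. For each fixed $z$, the function $y\mapsto\varrho(y+z)$ is log-concave, smooth and integrable on $H\cong\mathbb R^k$. Apply Lemma~\ref{key-lemma-2} to the map $y\mapsto f(y+z)$, whose Jacobian is $J_f^H$, with adjugate $A^H_f$ and $Q_j^H=\langle A_f^H\theta,e_j\rangle$. Integrating over $z\in H^\perp$ gives
\[
\Bigl|\int(\partial_\theta\varphi)(f)\,d\mu\Bigr|
\le \mu\bigl((\det J_f^H)^2\le2\varepsilon^2\bigr)
+t\varepsilon^{-1}\sum_j\int|\partial_{e_j}Q_j^H|\,d\mu
+C(k,d)t\varepsilon^{-1}\sum_j\int_{\langle e_j\rangle^\perp}\sup_{s}|Q_j^H\varrho|(x+se_j)\,dx .
\]
Each $Q_j^H$ is a polynomial of degree at most $(k-1)(d-1)$. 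It is a sum of products of $k-1$ first derivatives of the $f_i$ along $H$.

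The last term is handled by Theorem~\ref{th-section}, which bounds it by $c(d,n)\|Q_j^H\|_{L^1(\mu)}$. For the middle term, Theorem~\ref{th-MB} together with \eqref{eq-exp-bound}, which bounds the section integral by a constant $C(n)$, gives $\|\partial_{e_j}Q_j^H\|_{L^2(\mu)}\le C(d,n)\|Q_j^H\|_{L^2(\mu)}$. Next, expand $Q_j^H$ as a sum of products of $k-1$ derivatives $\partial_{e_l}f_i$. Hölder's inequality and moment equivalence for polynomials under log-concave measures (the Carbery--Wright/Bourgain reverse Hölder bounds) give
\[
\|Q_j^H\|_{L^2(\mu)}\le C(k,d)\prod\|\partial_{e_l}f_i\|_{L^2(\mu)}.
\]
Applying Theorem~\ref{th-MB} with Remark~\ref{rm-variance} bounds each factor by $C(d,n)\sqrt{\operatorname{Var}_\mu(f_i)}$. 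Altogether, with $V:=\sum_j\operatorname{Var}_\mu(f_j)$, the last two terms are at most $C(k,d,n)\,t\varepsilon^{-1}V^{\frac{k-1}{2}}$.

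For the first term, Carbery--Wright \eqref{CW-est} applied to $\det J_f^H\in\mathcal P_{k(d-1)}$ gives the bound $C\varepsilon^{\frac1{k(d-1)}}D_H^{-\frac1{2k(d-1)}}$, where $D_H:=\int(\det J_f^H)^2\,d\mu$. We then optimize in $\varepsilon$ by balancing $\varepsilon^{1/m}D_H^{-1/(2m)}$ against $t\varepsilon^{-1}V^{(k-1)/2}$, where $m=k(d-1)$. This gives
\[
\Bigl|\int(\partial_\theta\varphi)(f)\,d\mu\Bigr|\le C\,D_H^{-\frac1{2(m+1)}}\bigl(tV^{\frac{k-1}2}\bigr)^{\frac1{m+1}}.
\]
Equivalently,
\[
D_H\Bigl|\int(\partial_\theta\varphi)(f)\,d\mu\Bigr|^{2(m+1)}\le C\,t^2V^{k-1},
\]
and this holds trivially when $D_H=0$. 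Averaging over $H$ with respect to $\nu_{k,n}$ and applying Lemma~\ref{lem-Gr-average} replaces $D_H$ by $C(k,n)\int\Delta_f\,d\mu$. Taking the $2(m+1)$-th root yields the claim.

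The main obstacle is the middle term. The inequality from Theorem~\ref{th-MB} must yield the exact homogeneity $V^{(k-1)/2}$, with no leftover factors, which requires the product/Hölder reduction of $Q_j^H$. A second issue is the approximation step: the smoothing of $\varrho$ needed to apply Lemma~\ref{key-lemma-2}, after which constants are transferred to the isotropic case, since near-isotropy only changes them by bounded factors.
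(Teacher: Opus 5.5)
Your proposal is correct and follows the paper's proof essentially step for step: Lemma~\ref{key-lemma-2} on each $k$-plane $H$, then Theorems~\ref{th-MB} and~\ref{th-section} with moment equivalence to bound the last two terms by $t\varepsilon^{-1}V^{\frac{k-1}{2}}$, then Carbery--Wright with optimization in $\varepsilon$, averaging via Lemma~\ref{lem-Gr-average}, and Gaussian smoothing. The differences are minor: the paper bounds $\|A_f^H\|_{\rm op}\le C(k)(\sum_j|\nabla f_j|^2)^{\frac{k-1}{2}}$ rather than expanding the minors and applying H\"older, and it smooths with $(X+\sigma G)/\sqrt{1+\sigma^2}$, which is exactly isotropic and so makes your ``near-isotropy'' adjustment unnecessary.
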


\begin{proof}
First, assume that $\mu$ has a density $\varrho\in C^\infty(\mathbb R^n)$.
	
{\bf Step 1.} 
Let $H$ be a $k$-dimensional subspace of $\mathbb R^n$, and let
$e_1,\ldots,e_k$ be an orthonormal basis in $H$. Define
\[
J_f^H
:=
\begin{pmatrix}
\langle \nabla f_1,e_1\rangle & \langle \nabla f_1,e_2\rangle & \ldots & \langle \nabla f_1,e_k\rangle \\
\langle \nabla f_2,e_1\rangle & \langle \nabla f_2,e_2\rangle & \ldots & \langle \nabla f_2,e_k\rangle \\
\ldots & \ldots & \ldots & \ldots \\
\langle \nabla f_k,e_1\rangle & \langle \nabla f_k,e_2\rangle & \ldots & \langle \nabla f_k,e_k\rangle
\end{pmatrix}
\]
and
\[
A_f^H:=\operatorname{adj}(J_f^H).
\]
For $j\in\{1,\ldots,k\}$, set
\[
Q_j^H(x):=\langle A_f^H(x)\theta,e_j\rangle.
\]

Write
\[
x=y+z,
\quad
y\in H,
\quad
z\in H^\perp.
\]
For each fixed $z\in H^\perp$, the function $y\mapsto\varrho(y+z)$ is log-concave and integrable on $H$. Hence
Lemma~\ref{key-lemma-2}, applied on the subspace $H$, gives
\begin{align*}
&
\biggl|\int_H(\partial_\theta\varphi)(f(y+z))\varrho(y+z)\,dy\biggr|
\\
&\le
\int_H
I_{\{(\det J_f^H)^2\le 2\varepsilon^2\}}(y+z)\varrho(y+z)\,dy
+
t\varepsilon^{-1}
\sum_{j=1}^k
\int_H|\partial_{e_j}Q_j^H(y+z)|\varrho(y+z)\,dy
\\
&
+
(2+18\pi kd)t\varepsilon^{-1}
\sum_{j=1}^k
\int_{H\cap\langle e_j\rangle^\perp}
\sup_{s\in\mathbb R}
|Q_j^H(\hat y^j+se_j+z)\varrho(\hat y^j+se_j+z)|\,d\hat y^j.
\end{align*}
Integrating with respect to
$z\in H^\perp$, we obtain
\begin{align*}
\biggl|\int_{\mathbb R^n}(\partial_\theta\varphi)(f)\,d\mu\biggr|
&\le
\mu\bigl((\det J_f^H)^2\le 2\varepsilon^2\bigr)
+
t\varepsilon^{-1}
\sum_{j=1}^k
\|\partial_{e_j}Q_j^H\|_{L^1(\mu)}
\\
&+
(2+18\pi kd)t\varepsilon^{-1}
\sum_{j=1}^k
\int_{\langle e_j\rangle^\perp}
\sup_{s\in\mathbb R}
|Q_j^H(x+se_j)\varrho(x+se_j)|\,dx.
\nonumber
\end{align*}

We now estimate the two last terms. By Theorem~\ref{th-MB}, applied to the
polynomial $Q_j^H$, and by Theorem~\ref{th-section} applied to the constant
polynomial $1$, one has
\[
\|\partial_{e_j}Q_j^H\|_{L^1(\mu)}
\le
\|\partial_{e_j}Q_j^H\|_{L^2(\mu)}
\le
C_1(k,d,n)\|Q_j^H\|_{L^2(\mu)}.
\]
Also, by Theorem~\ref{th-section},
\[
\int_{\langle e_j\rangle^\perp}
\sup_{s\in\mathbb R}
|Q_j^H(x+se_j)\varrho(x+se_j)|\,dx
\le
C_2(k,d,n)\|Q_j^H\|_{L^1(\mu)}
\le
C_2(k,d,n)\|Q_j^H\|_{L^2(\mu)}.
\]
Therefore,
\[
\biggl|\int_{\mathbb R^n}(\partial_\theta\varphi)(f)\,d\mu\biggr|
\le
\mu\bigl((\det J_f^H)^2\le 2\varepsilon^2\bigr)+
C_3(k,d,n)t\varepsilon^{-1}
\sum_{j=1}^k\|Q_j^H\|_{L^2(\mu)}.
\]
We have
\[
\sum_{j=1}^k\|Q_j^H\|_{L^2(\mu)}
\le
\sqrt{k}\,\||A_f^H\theta|\|_{L^2(\mu)}
\le
\sqrt{k}\,\|\|A_f^H\|_{\rm op}\|_{L^2(\mu)},
\]
and, since
\[
\|A_f^H\|_{\rm op}
\le
C_4(k)
\biggl(\sum_{j=1}^k|\nabla f_j|^2\biggr)^{\frac{k-1}{2}},
\]
we also have
\[
\|\|A_f^H\|_{\rm op}\|_{L^2(\mu)}
\le
C_4(k)
\biggl\|
\sum_{j=1}^k|\nabla f_j|^2
\biggr\|_{L^{k-1}(\mu)}^{\frac{k-1}{2}}.
\]
By the equivalence of moments on spaces of polynomials of bounded degree
\cite{Bobkov00}, it follows that
\[
\biggl\|
\sum_{j=1}^k|\nabla f_j|^2\biggr\|_{L^{k-1}(\mu)}
\le
C_5(k,d)
\sum_{j=1}^k\sum_{i=1}^n\|\partial_{x_i} f_j\|_{L^2(\mu)}^2.
\]
Finally, Theorem~\ref{th-MB}, Remark~\ref{rm-variance}, and
Theorem~\ref{th-section} applied to the constant polynomial $1$ give
\[
\|\partial_{x_i} f_j\|_{L^2(\mu)}^2
\le
C_6(d,n)\operatorname{Var}_\mu (f_j)
\quad \forall j\in\{1,\ldots,k\}
\quad \forall i\in\{1,\ldots,n\}.
\]
Therefore,
\[
\|\|A_f^H\|_{\rm op}\|_{L^2(\mu)}
\le
C_7(k,d,n)
\biggl(\sum_{j=1}^k\operatorname{Var}_\mu (f_j)\biggr)^{\frac{k-1}{2}}.
\]
Thus,
\begin{equation}\label{eq-H-estimate-2}
\biggl|\int_{\mathbb R^n}(\partial_\theta\varphi)(f)\,d\mu\biggr|
\le
\mu\bigl((\det J_f^H)^2\le 2\varepsilon^2\bigr)+
C_8(k,d,n)t\varepsilon^{-1}
\biggl(\sum_{j=1}^k\operatorname{Var}_\mu (f_j)\biggr)^{\frac{k-1}{2}}.
\end{equation}

{\bf Step 2.}
Assume first that
\[
\int_{\mathbb R^n}\bigl(\det J_f^H\bigr)^2\,d\mu>0.
\]
By the
Carbery--Wright inequality \eqref{CW-est} applied to the polynomial
$\det J_f^H\in \mathcal{P}_{k(d-1)}(\mathbb{R}^n)$,
we obtain
\[
\mu\bigl((\det J_f^H)^2\le 2\varepsilon^2\bigr)
\le
C_9(k,d)\varepsilon^{\frac{1}{k(d-1)}}\biggl(\int_{\mathbb R^n}\bigl(\det J_f^H\bigr)^2\,d\mu\biggr)^{-\frac{1}{2k(d-1)}}.
\]
Hence \eqref{eq-H-estimate-2} implies
\begin{align*}
\biggl|\int_{\mathbb R^n}(\partial_\theta\varphi)(f)\,d\mu\biggr|
&\le
C_9(k,d)\varepsilon^{\frac{1}{k(d-1)}}\biggl(\int_{\mathbb R^n}\bigl(\det J_f^H\bigr)^2\,d\mu\biggr)^{-\frac{1}{2k(d-1)}}
\\
&\qquad\qquad\qquad\qquad\qquad\qquad\qquad+
C_8(k,d,n)t\varepsilon^{-1}
\biggl(\sum_{j=1}^k\operatorname{Var}_\mu (f_j)\biggr)^{\frac{k-1}{2}}.
\end{align*}
Choosing
\[
\varepsilon
=
\biggl(
t
\biggl(\sum_{j=1}^k\operatorname{Var}_\mu (f_j)\biggr)^{\frac{k-1}{2}}
\Bigl(\int_{\mathbb R^n}(\det J_f^H)^2\,d\mu\Bigr)^{\frac{1}{2k(d-1)}}
\biggr)^{\frac{k(d-1)}{k(d-1)+1}},
\]
we obtain
\[
\biggl|
\int_{\mathbb R^n}(\partial_\theta\varphi)(f)\,d\mu
\biggr|
\le
C_{10}(k,d,n)
t^{\frac{1}{k(d-1)+1}}
\biggl(\sum_{j=1}^k\operatorname{Var}_\mu (f_j)\biggr)^{\frac{k-1}{2(k(d-1)+1)}}
\Bigl(\int_{\mathbb R^n}(\det J_f^H)^2\,d\mu\Bigr)^{
-\frac{1}{2(k(d-1)+1)}}.
\]
Equivalently,
\[
\biggl(\int_{\mathbb R^n}(\det J_f^H)^2\,d\mu\biggr)
\biggl|
\int_{\mathbb R^n}(\partial_\theta\varphi)(f)\,d\mu
\biggr|^{2(k(d-1)+1)}
\le
C_{11}(k,d,n)t^2
\biggl(\sum_{j=1}^k\operatorname{Var}_\mu (f_j)\biggr)^{k-1}.
\]
This estimate is also valid when the first integral on the left-hand side is zero.

Averaging the last estimate over all $H\in Gr(k,n)$ with respect to the
normalized Haar measure $\nu_{k,n}$ and applying Lemma~\ref{lem-Gr-average},
we obtain
\[
\biggl(\int_{\mathbb R^n}\Delta_f\,d\mu\biggr)
\biggl|
\int_{\mathbb R^n}(\partial_\theta\varphi)(f)\,d\mu
\biggr|^{2(k(d-1)+1)}
\le
C_{12}(k,d,n)t^2
\biggl(\sum_{j=1}^k\operatorname{Var}_\mu (f_j)\biggr)^{k-1}.
\]
Raising both sides to the power $\frac{1}{2(k(d-1)+1)}$ gives the claimed
estimate in the case where $\mu$ has a smooth density.

{\bf Step 3.}
It remains to remove the smoothness assumption. Let $X$ be a random vector
with distribution $\mu$, and let $G$ be an independent standard Gaussian
vector in $\mathbb R^n$. For $\sigma>0$, set
\[
X_\sigma:=\frac{X+\sigma G}{\sqrt{1+\sigma^2}},
\]
and denote by $\mu_\sigma$ the distribution of $X_\sigma$. Then
$\mu_\sigma$ has a $C^\infty$ density, is log-concave, and is isotropic.
Hence the estimate already proved for smooth densities applies to
$\mu_\sigma$:
\[
\biggl(\int_{\mathbb R^n}\Delta_f\,d\mu_\sigma\biggr)^{\frac{1}{2(k(d-1)+1)}}
\biggl|
\int_{\mathbb R^n}(\partial_\theta\varphi)(f)\,d\mu_\sigma
\biggr|
\le
C(k,d,n)t^{\frac{1}{k(d-1)+1}}
\biggl(\sum_{j=1}^k\operatorname{Var}_{\mu_\sigma} (f_j)\biggr)^{\frac{k-1}{2(k(d-1)+1)}}.
\]
We now let $\sigma\to0$. For every polynomial $Q$, expanding
\[
Q\biggl(\frac{X+\sigma G}{\sqrt{1+\sigma^2}}\biggr)
\]
and using the finiteness of all polynomial moments, we obtain
\[
\int_{\mathbb R^n}Q\,d\mu_\sigma
=
\mathbb E Q(X_\sigma)
\to
\mathbb E Q(X)
=
\int_{\mathbb R^n}Q\,d\mu.
\]
Therefore,
\[
\int_{\mathbb R^n}\Delta_f\,d\mu_\sigma
\to
\int_{\mathbb R^n}\Delta_f\,d\mu
\]
and
\[
\operatorname{Var}_{\mu_\sigma}(f_j)
\to
\operatorname{Var}_{\mu}(f_j)
\quad \forall j\in\{1,\ldots,k\}.
\]
Finally, since $X_\sigma\to X$ almost surely and
$(\partial_\theta\varphi)(f)$ is bounded and continuous, the Lebesgue dominated convergence theorem gives 
\[
\int_{\mathbb R^n}(\partial_\theta\varphi)(f)\,d\mu_\sigma
\to
\int_{\mathbb R^n}(\partial_\theta\varphi)(f)\,d\mu.
\]
Passing to the limit in the preceding estimate gives the claimed inequality
for the original isotropic log-concave measure $\mu$.
\end{proof}

\section{Control of nondegeneracy by the covariance of the extended mapping}
\label{sect-algeb}

Here we show that, for fixed $n,d,k\in\mathbb N$, nondegeneracy of a mapping
$f\in\mathcal P_d(\mathbb R^n;\mathbb R^k)$ in terms of $\Delta_f$ is equivalent
to nondegeneracy of the covariance matrix of the extended mapping~$F_\mu^d(f)$.

\subsection{Algebraic dependence}

\begin{lemma}\label{lem-ann-deg}
Let $n,d,k\in\mathbb N$, $n\ge k$, and let
$f=(f_1,\ldots,f_k)\in\mathcal P_d(\mathbb R^n;\mathbb{R}^k)$
be a polynomial mapping with nonconstant components. Assume that all
$k\times k$ minors of the matrix $J_f$ are zero polynomials. Then there
exists a nonzero polynomial
$Q\in\mathcal P_{d^{k-1}}(\mathbb R^k)$
such that
\[
Q(f_1,\ldots,f_k)\equiv0.
\]
\end{lemma}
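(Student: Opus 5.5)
The plan is to reduce the statement to two classical facts: the Jacobian criterion for algebraic dependence in characteristic zero, and a Perron-type degree bound for the annihilating polynomial (as in \cite{Ploski86,Ploski05}).

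\textbf{Step 1 (choosing a dependent subfamily).} Let $r$ be the generic rank of $J_f$, that is, the largest size of a minor of $J_f$ that is not the zero polynomial. By hypothesis $r\le k-1$. Since the components are nonconstant, some $\nabla f_j\not\equiv0$, so $r\ge1$. After relabeling, I choose $f_1,\ldots,f_r$ so that some $r\times r$ minor of $J_{(f_1,\ldots,f_r)}$ is not identically zero. Every $(r+1)\times(r+1)$ minor of $J_{(f_1,\ldots,f_{r+1})}$ vanishes identically. By the Jacobian criterion, $f_1,\ldots,f_r$ are algebraically independent, while $f_1,\ldots,f_{r+1}$ are algebraically dependent. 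It therefore suffices to find a nonzero $P\in\mathcal P_{d^{r}}(\mathbb R^{r+1})$ with $P(f_1,\ldots,f_{r+1})\equiv0$, and then set $Q(y):=P(y_1,\ldots,y_{r+1})$. Since $d^r\le d^{k-1}$, this $Q$ lies in $\mathcal P_{d^{k-1}}(\mathbb R^k)$.

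\textbf{Step 2 (structure of the relation ideal).} The kernel of $y\mapsto P(f_1,\ldots,f_{r+1})$ on $\mathbb R[y_1,\ldots,y_{r+1}]$ is a prime ideal. Its quotient has transcendence degree $r$, so the ideal has height one, and hence it is principal, generated by an irreducible polynomial $P$. I then need to bound $\deg P$.

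\textbf{Step 3 (reduction to $r$ variables).} Choose an affine embedding $\iota\colon\mathbb R^r\to\mathbb R^n$, $\iota(s)=x_0+Ls$, such that the composed $r\times r$ minor $\det\bigl(J_{(f_1,\ldots,f_r)}(\iota(s))\,L\bigr)$ is not identically zero. This holds for generic $(x_0,L)$: at a point $x_0$ where $J_{(f_1,\ldots,f_r)}$ has rank $r$, pick $L$ complementary to its kernel. Put $g_i:=f_i\circ\iota\in\mathcal P_d(\mathbb R^r)$. Then $g_1,\ldots,g_r$ are algebraically independent, and $g_1,\ldots,g_{r+1}$ are $r+1$ polynomials in $r$ variables, hence dependent. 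Their relation ideal is generated by an irreducible $P'$. Since $P(g_1,\ldots,g_{r+1})\equiv0$, we get $P'\mid P$, and irreducibility of $P$ forces $P=cP'$. Thus $\deg P=\deg P'$.

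\textbf{Step 4 (Perron bound).} Perron's theorem, in the form of Płoski, applies to $r+1$ polynomials $g_1,\ldots,g_{r+1}$ in $r$ variables with $\deg g_i=d_i\ge1$. It gives an annihilating polynomial whose weighted degree, with weights $d_i$, is at most $\prod_i d_i$. Consequently the total degree is at most $\prod_i d_i/\min_i d_i\le d^{r}$. Applied to the generator $P'$, this yields $\deg P\le d^r$.

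If some $g_i$ were constant, the bound would follow even more directly: take $P=y_i-c$. One does have to check that the generator, and not merely some annihilator, satisfies Perron's bound. This holds because every annihilator is a multiple of $P'$, so $\deg P'$ is at most the degree of the Perron annihilator.

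\textbf{Main obstacle.} The delicate point is Step 3, namely passing from $n$ variables to $r$ variables without losing control of the degree. The argument above handles it through principality of the height-one prime together with irreducibility. If invoking Perron's theorem in exactly this form proves awkward, I would instead cite Płoski's $n$-variable version directly \cite{Ploski05}: for algebraically dependent $f_1,\ldots,f_{r+1}$ of degree at most $d$ with transcendence degree $r$, the minimal annihilator has degree at most $d^r$.
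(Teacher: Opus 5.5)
Your proof is correct, but it takes a more self-contained route than the paper.

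The paper applies the Jacobian criterion to the whole family $f_1,\ldots,f_k$ to get algebraic dependence. It then invokes P{\l}oski's form of Perron's theorem \cite[Corollary~(1.6)]{Ploski86} directly. That result is valid for algebraically dependent polynomials in an arbitrary number $n$ of variables, and it yields a complex annihilator $R$ with $\sum_j m_jd_j\le d_1\cdots d_k$. This gives $|\mathbf m|\le d_1\cdots d_k/\min_j d_j\le d^{k-1}$, and taking a nonzero real or imaginary part of $R$ finishes the proof.

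You instead use only the classical Perron theorem for $r+1$ polynomials in $r$ variables, and in effect you re-derive the $n$-variable statement that the paper takes from P{\l}oski. Your steps are:
\begin{enumerate}
\item the generic rank of $J_f$ singles out a subfamily of transcendence degree $r$;
\item its relation ideal is a height-one prime in a UFD, hence principal;
\item a generic affine restriction $\iota$ that keeps $f_1,\ldots,f_r$ independent identifies the generator $P$, up to a constant, with the generator $P'$ of the restricted relation ideal, whose degree Perron controls.
\end{enumerate}
This buys independence from P{\l}oski's generalization and a slightly sharper conclusion: degree at most $d^{r}$, with $r$ the generic rank of $J_f$, and only $r+1$ of the variables involved. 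The paper's version, by contrast, is a two-line citation.

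One small addition: if your version of Perron's theorem is stated over $\mathbb C$, a complex annihilator $F=F_1+iF_2$ has real annihilators $F_1,F_2$, both divisible by $P'$. So the degree comparison in your Step 4 is unaffected.
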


\begin{proof}
Since all $k\times k$ minors of the matrix $J_f$ are zero polynomials, the
Jacobian criterion for algebraic independence implies that
$f_1,\ldots,f_k$ are algebraically dependent
(see, for instance, \cite{EhrenborgRota} or \cite[Theorem~2]{Kayal09}).

Set
\[
d_j=\deg f_j
\quad \forall j\in\{1,\ldots,k\}.
\]
Since the polynomials $f_j$ are nonconstant and have degree at most $d$,
\[
1\le d_j\le d,
\quad \forall j\in\{1,\ldots,k\}.
\]

We apply P{\l}oski's form of Perron's theorem
\cite[Corollary~(1.6)]{Ploski86}. It gives a nonzero polynomial
\[
R(t)=\sum_{{\bf m}}a({\bf m}) t^{\bf m},
\quad
t^{\bf m}:=t_1^{m_1}\ldots t_k^{m_k},
\quad a({\bf m})\in \mathbb{C},
\]
such that
\[
R(f_1,\ldots,f_k)\equiv0
\]
and
\[
\max_{a({\bf m})\ne0}
\sum_{j=1}^k m_j d_j
\le
d_1\ldots d_k.
\]
Hence, for every ${\bf m}$ such that $a({\bf m})\ne0$,
\[
\min\{d_1,\ldots,d_k\}|{\bf m}|
\le
\sum_{j=1}^k m_j d_j
\le
d_1\ldots d_k.
\]
Therefore,
\[
|{\bf m}|
\le
\frac{d_1\ldots d_k}{\min\{d_1,\ldots,d_k\}}
\le d^{k-1}.
\]
Thus,
\[
\deg R\le d^{k-1}.
\]

Now write
\[
R=R_1+iR_2,
\quad
R_1,R_2\in \mathcal{P}_{d^{k-1}}(\mathbb{R}^k).
\]
Since the polynomials $f_j$ have real coefficients and
\[
R(f_1,\ldots,f_k)\equiv0,
\]
we have
\[
R_1(f_1,\ldots,f_k)\equiv0
\quad\text{and}\quad
R_2(f_1,\ldots,f_k)\equiv0.
\]
At least one of $R_1,R_2$ is nonzero. Taking this nonzero real or imaginary
part, we obtain a nonzero polynomial
$Q\in\mathcal P_{d^{k-1}}(\mathbb R^k)$
with
\[
Q(f_1,\ldots,f_k)\equiv0.
\]
This proves the lemma.
\end{proof}

\begin{lemma}\label{lem-n-less-k}
Let $d,n,k\in\mathbb N$, $n<k$, and let $\mu$ be an absolutely continuous
log-concave measure on $\mathbb R^n$. Let
$f=(f_1,\ldots,f_k)\in\mathcal P_d(\mathbb R^n;\mathbb R^k)$
be such that
\[
\operatorname{Var}_\mu(f_j)>0
\quad
\forall j\in\{1,\ldots,k\}.
\]
Then
$\det \Sigma_\mu^d(f)=0$.
\end{lemma}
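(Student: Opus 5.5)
We need to show that $f_1,\ldots,f_k$, viewed as polynomials on $\mathbb R^n$ with $n<k$, satisfy a nontrivial polynomial relation of degree at most $d^{k-1}$. The plan is to build such a relation directly and then read it off as a null vector of $\Sigma_\mu^d(f)$.

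First, since $n<k$, any $k$ polynomials in $n$ variables are algebraically dependent, because the transcendence degree of $\mathbb R(x_1,\ldots,x_n)$ is $n$. We need the degree bound, however. The natural route is Lemma~\ref{lem-ann-deg}: view $f$ as a polynomial mapping on $\mathbb R^k\supset\mathbb R^n$ that ignores the extra $k-n$ variables, i.e. set $\tilde f(x_1,\ldots,x_k):=f(x_1,\ldots,x_n)$. Then $J_{\tilde f}$ is a $k\times k$ matrix whose last $k-n$ columns vanish, so its only $k\times k$ minor, $\det J_{\tilde f}$, is identically zero. The components of $\tilde f$ are nonconstant, since $\operatorname{Var}_\mu(f_j)>0$ rules out constant $f_j$. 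Lemma~\ref{lem-ann-deg} (with $n$ replaced by $k$) therefore yields a nonzero $Q\in\mathcal P_{d^{k-1}}(\mathbb R^k)$ with $Q(f_1,\ldots,f_k)\equiv0$.

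Next, we transfer this to the normalized components. Since $\hat f_j=\sigma_{f_j}^{-1}(f_j-c_j)$ is an invertible affine change of each coordinate, $f_j=\sigma_{f_j}\hat f_j+c_j$. Hence $\tilde Q(y):=Q(\sigma_{f_1}y_1+c_1,\ldots,\sigma_{f_k}y_k+c_k)$ is a nonzero polynomial of degree at most $d^{k-1}$ satisfying $\tilde Q(\hat f)\equiv0$. Write $\tilde Q=\tilde Q(0)+P$ with $P(0)=0$.

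We must check that $P\neq0$. If $P=0$, then $\tilde Q$ would be a nonzero constant, contradicting $\tilde Q(\hat f)\equiv0$. Now $P(\hat f)=-\tilde Q(0)$ is constant, so $\operatorname{Var}_\mu(P(\hat f))=0$. By Definition~\ref{def-ext}, this variance is the quadratic form of $\Sigma_\mu^d(f)$ evaluated at the nonzero coefficient vector of $P$ in the monomial basis $\{y^{\mathbf m}\colon 1\le|\mathbf m|\le d^{k-1}\}$. A positive semidefinite matrix with a nontrivial null vector has zero determinant, so $\det\Sigma_\mu^d(f)=0$.

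The integrability requirements in Definition~\ref{def-ext} hold because log-concave measures have all polynomial moments. Absolute continuity of $\mu$ is not actually needed here; the argument is purely algebraic. The only delicate step is the degree bound, and it is handled entirely by Lemma~\ref{lem-ann-deg} through the padding trick.
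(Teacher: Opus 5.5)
Your proof is correct. It differs from the paper's only in how the annihilating polynomial is produced.

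The paper does not use the Jacobian criterion at all. It picks $n+1$ of the normalized components $\hat f_1,\ldots,\hat f_{n+1}$ and applies the classical Perron theorem for $n+1$ polynomials in $n$ variables. This gives a nonzero $R\in\mathcal P_{d^n}(\mathbb R^{n+1})$ with $R(\hat f_1,\ldots,\hat f_{n+1})\equiv0$, which is then regarded as a polynomial on $\mathbb R^k$ using $d^n\le d^{k-1}$. You instead pad $f$ to a map on $\mathbb R^k$ whose $k\times k$ Jacobian has $k-n\ge1$ zero columns. This makes the hypothesis $n\ge k$ of Lemma~\ref{lem-ann-deg} hold, and that lemma (Jacobian criterion plus P{\l}oski's form of Perron) supplies the relation.

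Each route has its advantages. Yours reuses an already-established lemma instead of citing a further theorem. The paper's is more elementary, since it needs only classical Perron, and it gives the sharper degree bound $d^n$. Because it is applied directly to the $\hat f_j$, it also needs no affine transfer from $f$ to $\hat f$. You could skip that step too by padding $\hat f$ rather than $f$.

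The final step, stripping the constant term to get a nonzero null vector of the positive semidefinite matrix $\Sigma_\mu^d(f)$, is the same in both. Your remark that absolute continuity of $\mu$ is never used applies equally to the paper's argument.
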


\begin{proof}
Since
\[
\operatorname{Var}_\mu(f_j)>0,
\]
the polynomials $\hat f_j$ are well defined and nonconstant. Since $n<k$,
we may choose $n+1$ components, say
$\hat f_1,\ldots,\hat f_{n+1}$.
These are nonconstant polynomials on $\mathbb R^n$ of degree at most $d$.

By the classical Perron theorem for $n+1$ polynomials in $n$ variables
(see, for instance, \cite[Theorem~1.1]{Ploski05}), there exists a nonzero polynomial
$R\in\mathcal P_{d^n}(\mathbb R^{n+1})$
such that
\[
R(\hat f_1,\ldots,\hat f_{n+1})\equiv0.
\]
Define
\[
Q(t_1,\ldots,t_k)=R(t_1,\ldots,t_{n+1}).
\]
Then $Q$ is a nonzero polynomial on $\mathbb R^k$,
\[
Q(\hat f_1,\ldots,\hat f_k)\equiv0,
\]
and, since $d^n\le d^{k-1}$,
$Q\in \mathcal P_{d^{k-1}}(\mathbb R^k)$.

Write
\[
Q(t)=q(0)+\sum_{1\le|\mathbf m|\le d^{k-1}}q(\mathbf m)t^{\mathbf m}.
\]
The vector
$\bigl(q(\mathbf m)\bigr)_{1\le|\mathbf m|\le d^{k-1}}$
is nonzero, since otherwise $Q$ would be a nonzero constant polynomial, which
cannot vanish after substitution. Hence
\[
\sum_{1\le|\mathbf m|\le d^{k-1}}q(\mathbf m)\hat f^{\mathbf m}
\equiv
-q(0).
\]
Thus, a nontrivial linear combination of the components of $F_\mu^d(f)$ is
constant. Therefore,
\[
\operatorname{Var}_\mu
\biggl(
\sum_{1\le|\mathbf m|\le d^{k-1}}q(\mathbf m)\hat f^{\mathbf m}
\biggr)=0,
\]
and hence
\[
\det \Sigma_\mu^d(f)=0.
\]
This proves the lemma.
\end{proof}

\begin{lemma}\label{lem-zero-equiv}
Let $n,d,k\in\mathbb N$, $n\ge k$, and let $\mu$ be an isotropic
log-concave measure on $\mathbb R^n$. Let
$f=(f_1,\ldots,f_k)\in \mathcal P_d(\mathbb R^n;\mathbb R^k)$
and assume that
\[
\operatorname{Var}_\mu(f_j)>0,
\quad \forall j\in\{1,\ldots,k\}.
\]
Then
\[
\int_{\mathbb R^n}\Delta_{\hat f}\,d\mu=0
\quad\Longleftrightarrow\quad
\det \Sigma_\mu^d(f)=0.
\]
\end{lemma}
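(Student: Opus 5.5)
Both directions go through the nonnegative polynomial $\Delta_{\hat f}=\det(J_{\hat f}J_{\hat f}^*)$ and the fact that an isotropic log-concave measure on $\mathbb R^n$ has a density that is strictly positive on an open convex set with nonempty interior; by \eqref{eq-exp-bound}, all polynomial moments are finite. Hence a polynomial vanishes $\mu$-a.e. if and only if it vanishes identically.

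\emph{Direction ($\Rightarrow$).} Suppose $\int\Delta_{\hat f}\,d\mu=0$.
\begin{enumerate}
\item Since $\Delta_{\hat f}\ge0$ is a polynomial, the observation above gives $\Delta_{\hat f}\equiv0$.
\item By the Cauchy--Binet formula, $\Delta_{\hat f}$ is the sum of the squares of all $k\times k$ minors of $J_{\hat f}$, so every such minor is the zero polynomial.
\item The components $\hat f_j$ are nonconstant because $\operatorname{Var}_\mu(f_j)>0$, and they lie in $\mathcal P_d(\mathbb R^n)$. Lemma~\ref{lem-ann-deg} therefore yields a nonzero $Q\in\mathcal P_{d^{k-1}}(\mathbb R^k)$ with $Q(\hat f)\equiv0$.
\item Write $Q=q(0)+\sum_{1\le|\mathbf m|\le d^{k-1}}q(\mathbf m)t^{\mathbf m}$. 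The coefficient vector $(q(\mathbf m))_{|\mathbf m|\ge1}$ is nonzero, since otherwise $Q$ would be a nonzero constant that vanishes identically.
\item It follows that $\sum q(\mathbf m)\hat f^{\mathbf m}\equiv -q(0)$ has zero variance. Hence this coefficient vector lies in the kernel of $\Sigma_\mu^d(f)$, and $\det\Sigma_\mu^d(f)=0$. This is the same argument as in Lemma~\ref{lem-n-less-k}.
\end{enumerate}

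\emph{Direction ($\Leftarrow$).} Suppose $\det\Sigma_\mu^d(f)=0$.
\begin{enumerate}
\item Choose a nonzero vector $q=(q(\mathbf m))$ with $\langle\Sigma_\mu^d(f)q,q\rangle=0$.
\item Then $P:=\sum q(\mathbf m)\hat f^{\mathbf m}$ has zero variance, so $P$ is $\mu$-a.e. constant, say $P=c$. Since $P$ is a polynomial, $P\equiv c$ on $\mathbb R^n$.
\item Set $Q(t):=\sum q(\mathbf m)t^{\mathbf m}-c$. This is a nonzero polynomial on $\mathbb R^k$ (its nonconstant part is nonzero), and $Q(\hat f)\equiv0$.
\item Differentiating gives $\nabla Q(\hat f(x))^{*}J_{\hat f}(x)=0$ for all $x$.
\item If $\nabla Q(\hat f(x))\ne0$ on a nonempty open set, then on that set the rows of $J_{\hat f}$ are linearly dependent. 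Hence $\Delta_{\hat f}=\det(J_{\hat f}J_{\hat f}^*)=0$ there, so $\Delta_{\hat f}\equiv0$ as a polynomial and $\int\Delta_{\hat f}\,d\mu=0$.
\end{enumerate}

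The main obstacle is the case where $\nabla Q\circ\hat f\equiv0$. Here I would replace $Q$ by a nonzero annihilating polynomial of minimal degree. Some partial derivative $\partial_{t_i}Q$ is a nonzero polynomial of smaller degree, so by minimality $\partial_{t_i}Q(\hat f)\not\equiv0$. Such a derivative must exist because $Q$ is nonconstant, and nonconstancy follows since $Q(\hat f)\equiv0$ with $Q\ne0$. Therefore $\nabla Q(\hat f)$ is not identically zero, hence it is nonzero on a dense open set, and step 5 applies.

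Minimality is taken over all nonzero polynomials $R$ with $R(\hat f)\equiv0$, with no constraint on the degree, since only the existence of one such $R$ is needed. Thus in fact $\Delta_{\hat f}\equiv0$. This is the standard easy half of the Jacobian criterion. I would present it directly as above rather than cite it, so that the real-coefficient setting is handled cleanly.
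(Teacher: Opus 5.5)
Your proposal is correct and follows the paper's proof essentially step for step. For ($\Rightarrow$) it uses Cauchy--Binet together with Lemma~\ref{lem-ann-deg}, and for ($\Leftarrow$) it turns a zero-variance linear combination into a nonzero annihilating polynomial; the only difference is that where the paper cites the Jacobian criterion to pass from $Q(\hat f)\equiv0$ to the vanishing of all $k\times k$ minors, you prove that easy half directly via a minimal-degree annihilating polynomial, which makes the argument self-contained.
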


\begin{proof}
Assume first that
\[
\int_{\mathbb R^n}\Delta_{\hat f}\,d\mu=0.
\]
Since $\Delta_{\hat f}\ge0$, we have
\[
\Delta_{\hat f}=0
\quad \mu\text{-a.e.}
\]
The measure $\mu$ is isotropic and log-concave, hence its support has
nonempty interior in $\mathbb R^n$. Since $\Delta_{\hat f}$ is a polynomial,
it follows that
\[
\Delta_{\hat f}\equiv0
\quad\text{on } \mathbb R^n.
\]
By the Cauchy--Binet formula,
\[
\Delta_{\hat f}
=
\sum_{1\le i_1<\ldots<i_k\le n}
\Bigl(\det
\Bigl(\partial_{x_{i_s}} \hat f_r
\Bigr)_{1\le r,s\le k}
\Bigr)^2.
\]
Therefore all $k\times k$ minors of $J_{\hat f}$ are zero polynomials. By Lemma \ref{lem-ann-deg}, there exists a nonzero polynomial
$Q\in \mathcal{P}_{d^{k-1}}(\mathbb{R}^k)$
such that
\[
Q(\hat f_1,\ldots,\hat f_k)\equiv0.
\]
Write
\[
Q(t)=q(0)+
\sum_{1\le |\mathbf m|\le d^{k-1}}
q(\mathbf m)t^{\mathbf m}.
\]
The vector
$\bigl(q(\mathbf m)\bigr)_{1\le |\mathbf m|\le d^{k-1}}$
is nonzero. Hence,
\[
\sum_{1\le |\mathbf m|\le d^{k-1}}
q(\mathbf m)\hat f^{\mathbf m}
\equiv -q(0).
\]
Thus, a nontrivial linear combination of the components of $F_\mu^d(f)$ is
constant. Therefore,
\[
\operatorname{Var}_\mu
\biggl(
\sum_{1\le |\mathbf m|\le d^{k-1}}
q(\mathbf m)\hat f^{\mathbf m}
\biggr)=0,
\]
and hence
\[
\det \Sigma_\mu^d(f)=0.
\]

Conversely, assume that
\[
\det \Sigma_\mu^d(f)=0.
\]
Then there exists a nonzero vector
$\bigl(q(\mathbf m)\bigr)_{1\le |\mathbf m|\le d^{k-1}}$
such that
\[
\operatorname{Var}_\mu
\biggl(
\sum_{1\le |\mathbf m|\le d^{k-1}}
q(\mathbf m)\hat f^{\mathbf m}
\biggr)=0.
\]
Therefore, there exists a constant $q(0)\in\mathbb R$ such that
\[
\sum_{1\le |\mathbf m|\le d^{k-1}}
q(\mathbf m)\hat f^{\mathbf m}
=
-q(0)
\quad \mu\text{-a.e.}
\]
Since $\mu$ has support with nonempty interior and the left-hand side is a
polynomial, this equality holds identically on $\mathbb R^n$. Hence
\[
Q(\hat f_1,\ldots,\hat f_k)\equiv0,
\]
where
\[
Q(t)=q(0)+
\sum_{1\le |\mathbf m|\le d^{k-1}}
q(\mathbf m)t^{\mathbf m}.
\]
It is clear that the polynomial $Q$ is nonzero.
Thus, $\hat f_1,\ldots,\hat f_k$ are algebraically dependent. By the
Jacobian criterion for algebraic independence (see, for instance, \cite{EhrenborgRota}), all
$k\times k$ minors of $J_{\hat f}$ are zero polynomials. Therefore, by
Cauchy--Binet,
$\Delta_{\hat f}\equiv0$.
Consequently,
\[
\int_{\mathbb R^n}\Delta_{\hat f}\,d\mu=0.
\]
This proves the stated equivalence.
\end{proof}

\subsection{Equivalence of quantitative nondegeneracy conditions in fixed dimension}

We begin with the following compactness lemma.

\begin{lemma}\label{lem-compactness}
Let $n,d,k\in \mathbb{N}$ and let
$(\mu_r, f_{1,r},\ldots,f_{k,r})$ be a sequence such that each $\mu_r$ is an
isotropic log-concave measure on $\mathbb{R}^n$,
\[
f_{j,r}\in \mathcal{P}_d(\mathbb{R}^n),
\quad
\forall j\in\{1,\ldots,k\},\ \forall r\in\mathbb{N},
\]
and
\[
\int_{\mathbb{R}^n}f_{j,r}\,d\mu_r =0,\quad
\int_{\mathbb{R}^n}f_{j,r}^2\,d\mu_r =1,
\quad
\forall j\in\{1,\ldots,k\},\ \forall r\in\mathbb{N}.
\]
Then there exist an isotropic log-concave measure $\mu$, polynomials
$f_1,\ldots,f_k\in\mathcal P_d(\mathbb R^n)$, and a subsequence
$\{r_\ell\}_{\ell\in \mathbb{N}}$ such that
$\mu_{r_\ell}\Rightarrow\mu$
weakly, and, for every $j\in\{1,\ldots,k\}$, the coefficients of
$f_{j,r_\ell}$ converge to the coefficients of $f_j$. Moreover,
\[
\int_{\mathbb{R}^n}f_j\,d\mu =0,\quad
\int_{\mathbb{R}^n}f_j^2\,d\mu =1,
\quad
\forall j\in\{1,\ldots,k\}.
\]
\end{lemma}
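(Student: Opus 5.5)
The proof has two parts. First we extract convergent subsequences of measures and of coefficient vectors. Then we pass the moment identities to the limit using uniform integrability, which comes from log-concave tail bounds.

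\textbf{Measures.} Isotropic log-concave measures on $\mathbb R^n$ have second moment $n$, so the family $\{\mu_r\}$ is tight by Chebyshev's inequality. Prokhorov's theorem gives a weakly convergent subsequence $\mu_{r_\ell}\Rightarrow\mu$.

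Log-concavity is preserved under weak limits. The defining inequality $\mu(sA+(1-s)B)\ge\mu(A)^s\mu(B)^{1-s}$ for compact sets passes to the limit after a standard approximation: use closed neighborhoods of $A$ and $B$ together with the upper semicontinuity of $\mu_r(K)$ on closed sets, and the Minkowski-sum identity $(A)_\delta+(B)_\delta\subset (sA+(1-s)B)_\delta$ for the convex combination.

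Isotropy also passes to the limit. All isotropic log-concave measures satisfy the uniform exponential tail bound $\mu_r(|x|>t)\le Ce^{-ct/\sqrt n}$ (Borell's lemma). Hence every polynomial $p$ of bounded degree is uniformly integrable along the sequence, and $\int p\,d\mu_{r_\ell}\to\int p\,d\mu$. In particular the barycenter and covariance converge, so $\mu$ is isotropic. Being isotropic, $\mu$ is not supported in a hyperplane, and therefore it is an absolutely continuous log-concave measure.

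\textbf{Coefficients.} This is the main obstacle: we need uniform bounds on the coefficients of $f_{j,r}$. Consider the quantity
\[
N(p):=\inf_{\nu}\|p\|_{L^2(\nu)},
\]
where the infimum runs over isotropic log-concave measures $\nu$ on $\mathbb R^n$. We claim that $N$ defines a norm on $\mathcal P_d(\mathbb R^n)$ that is equivalent to the coefficient norm, that is, $\|p\|_{L^2(\nu)}\ge c(n,d)\|p\|_{\rm coef}$ uniformly in $\nu$.

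We argue by contradiction. Suppose there are unit-coefficient polynomials $p_r$ and isotropic log-concave measures $\nu_r$ with $\|p_r\|_{L^2(\nu_r)}\to0$. Pass to subsequences $p_r\to p$ with $\|p\|_{\rm coef}=1$ and $\nu_r\Rightarrow\nu$ isotropic, using the step above. The uniform integrability, now applied to $p_r^2$ whose coefficients are bounded, gives $\|p\|_{L^2(\nu)}=0$. But $\nu$ has a density, so $p=0$ on a set of positive measure, and hence $p\equiv0$. This contradicts $\|p\|_{\rm coef}=1$.

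It follows that the normalization $\int f_{j,r}^2d\mu_r=1$ bounds the coefficient vectors uniformly. Passing to a further subsequence, the coefficients of $f_{j,r_\ell}$ converge to those of some $f_j\in\mathcal P_d(\mathbb R^n)$.

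\textbf{Passage to the limit.} Write
\[
\int f_{j,r_\ell}\,d\mu_{r_\ell}-\int f_j\,d\mu
=\int (f_{j,r_\ell}-f_j)\,d\mu_{r_\ell}+\Bigl(\int f_j\,d\mu_{r_\ell}-\int f_j\,d\mu\Bigr).
\]
The first term tends to zero because the coefficients of $f_{j,r_\ell}-f_j$ tend to zero and the moments of $\mu_r$ of order at most $2d$ are uniformly bounded. The second term tends to zero by uniform integrability. The same decomposition applied to $f_{j,r_\ell}^2$ yields $\int f_j\,d\mu=0$ and $\int f_j^2\,d\mu=1$.
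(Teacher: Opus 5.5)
Your proof is correct. For the measures it follows the paper: tightness from $\int|x|^2\,d\mu_r=n$, stability of log-concavity under weak limits, and uniform moment bounds giving convergence of polynomial moments and hence isotropy of $\mu$. You give more detail than the paper on why weak limits stay log-concave and why $\mu$ is absolutely continuous.

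The genuine difference is the uniform bound on the coefficients. The paper gets it quantitatively. Every isotropic log-concave density satisfies $\varrho\ge c_1(n)$ on $\{|x|\le 1/9\}$ (the Lov\'asz--Vempala bound it cites). Hence $1=\int f_{j,r}^2\,d\mu_r\ge c_1(n)\int_{\{|x|\le 1/9\}}f_{j,r}^2\,dx$, and equivalence of norms on the finite-dimensional space $\mathcal P_d(\mathbb R^n)$ finishes.

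You instead run a second compactness argument, reusing your first step. A bad sequence $(p_r,\nu_r)$ converges to $(p,\nu)$ with $\nu$ isotropic, log-concave and absolutely continuous, and $\int p^2\,d\nu=0$ forces $p\equiv 0$. This avoids the pointwise density lower bound and is self-contained once you know the class is closed under weak limits. It yields no explicit constant $c(n,d)$, but that costs nothing here: the lemma is only used in the equally non-quantitative contradiction argument of Theorem~\ref{th-compactness}. The paper's route gives the explicit constant $c_1(n)c_2(n,d)$, and at that stage it never needs absolute continuity of a limit measure.

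One small correction: do not claim that $N(p)=\inf_\nu\|p\|_{L^2(\nu)}$ is a norm. An infimum of norms need not satisfy the triangle inequality. All you use, and all your contradiction argument proves, is the lower bound $\|p\|_{L^2(\nu)}\ge c(n,d)\|p\|_{\mathrm{coef}}$ uniformly over isotropic log-concave $\nu$.
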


\begin{proof}
Since each $\mu_r$ is isotropic, we have
\begin{equation}\label{eq-isotr}
\int_{\mathbb R^n}|x|^2\,\mu_r(dx)=n.
\end{equation}
Therefore, the sequence $\{\mu_r\}_{r\in\mathbb N}$ is tight. Passing to a
subsequence, we may assume that
$\mu_r\Rightarrow \mu$
weakly for some probability measure $\mu$ on $\mathbb R^n$.

The class of log-concave measures is closed under weak convergence. Moreover,
by the moment comparison estimate for log-concave measures
\cite[Theorem~5.22]{LV07} and by \eqref{eq-isotr}, the moments of every fixed
order are uniformly bounded. Thus, weak convergence implies convergence of all
polynomial moments (see \cite[Theorem~3.5]{Billingsley99}), that is,
\begin{equation}\label{eq-monom-conv}
\int_{\mathbb{R}^n} x^{\bf m}\,\mu_r(dx)\to
\int_{\mathbb{R}^n} x^{\bf m}\,\mu(dx)
\quad
\forall {\bf m}\in \mathbb{Z}_+^n.
\end{equation}
In particular, $\mu$ is also an isotropic log-concave measure.

It remains to prove that the coefficients of the polynomials $f_{j,r}$ are
bounded. By \cite[Theorem~5.14]{LV07}, there exists a number $c_1(n)>0$,
depending only on $n$, such that every isotropic log-concave density
$\varrho$ satisfies
\[
\varrho(x)\ge c_1(n) I_{\{|x|\le 1/9\}}.
\]
Consequently, for all $j\in\{1,\ldots,k\}$ and all $r\in\mathbb N$,
\[
1=\int_{\mathbb{R}^n}f_{j,r}^2\,d\mu_r
\ge
c_1(n)\int_{\{|x|\le 1/9\}}|f_{j,r}(x)|^2\,dx.
\]

Now, the mappings
\[
g\mapsto
\biggl(\int_{\{|x|\le1/9\}}|g(x)|^2\,dx\biggr)^{1/2}
\]
and
\[
g\mapsto \max_{|{\bf m}|\le d}|a({\bf m})|,
\quad
g=\sum_{|\mathbf m|\le d} a(\mathbf m)x^{\mathbf m},
\]
define norms on the finite-dimensional space $\mathcal{P}_d(\mathbb{R}^n)$.
Thus, there exists a constant $c_2(n,d)>0$ such that
\[
\int_{\{|x|\le1/9\}}|g(x)|^2\,dx
\ge
c_2(n,d)
\max_{|{\bf m}|\le d}|a({\bf m})|^2
\quad
\forall g\in \mathcal{P}_d(\mathbb{R}^n).
\]
Applying this to $g=f_{j,r}$, we get
\[
\max_{|{\bf m}|\le d}|a_{j,r}({\bf m})|^2
\le
\frac{1}{c_1(n)c_2(n,d)},
\]
where
\[
f_{j,r}(x)=\sum_{|\mathbf m|\le d}a_{j,r}({\bf m})x^{\bf m}.
\]
Thus, all coefficients of all polynomials $f_{j,r}$ are uniformly bounded.
Hence, passing to a further subsequence, we may assume that, for every
$j\in\{1,\ldots,k\}$, the polynomials $f_{j,r}$ converge coefficientwise to
some polynomial
\[
f_j\in\mathcal P_d(\mathbb R^n).
\]
Renaming this subsequence as $\{r_\ell\}_{\ell\in\mathbb N}$, we have
\[
\mu_{r_\ell}\Rightarrow\mu
\]
weakly, and the coefficients of each $f_{j,r_\ell}$ converge to the
coefficients of $f_j$.

Finally, taking this coefficientwise convergence together with
\eqref{eq-monom-conv} into account, we obtain
\[
\int_{\mathbb{R}^n}f_j\,d\mu=0,\quad
\int_{\mathbb{R}^n}f_j^2\,d\mu=1,
\quad
\forall j\in\{1,\ldots,k\}.
\]
This proves the stated compactness assertion.
\end{proof}

\begin{theorem}\label{th-compactness}
Let $n,d,k\in\mathbb N$ and assume that $n\ge k$. There exist two functions
\[
\alpha_{n,d,k}\colon(0,\infty)\to(0,\infty)
\quad\text{and}\quad
\beta_{n,d,k}\colon(0,\infty)\to(0,\infty),
\]
with the following property.

Let $\mu$ be an isotropic log-concave measure on $\mathbb R^n$,
and let
$f=(f_1,\ldots,f_k)\in \mathcal P_d(\mathbb R^n;\mathbb R^k)$
be such that
\[
\operatorname{Var}_\mu(f_j)>0,
\quad
\forall j\in\{1,\ldots,k\}.
\]
Then, for every $a>0$,
\[
\int_{\mathbb{R}^n} \Delta_{\hat{f}}\,d\mu\ge a
\quad\Longrightarrow\quad
\det \Sigma_\mu^d(f)\ge \beta_{n,d,k}(a),
\]
and, for every $b>0$,
\[
\det \Sigma_\mu^d(f)\ge b
\quad\Longrightarrow\quad
\int_{\mathbb{R}^n} \Delta_{\hat{f}}\,d\mu\ge \alpha_{n,d,k}(b).
\]
\end{theorem}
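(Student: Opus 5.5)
The plan is a contradiction argument built on Lemma~\ref{lem-compactness} and Lemma~\ref{lem-zero-equiv}. Both $\int\Delta_{\hat f}\,d\mu$ and $\det\Sigma_\mu^d(f)$ depend on $f$ only through $\hat f$. So throughout we may replace $f$ by $\hat f$ and assume
\[
\int f_j\,d\mu=0,\qquad \int f_j^2\,d\mu=1 .
\]
This is exactly the normalization in Lemma~\ref{lem-compactness}.

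We then define the two functions directly. For $a>0$, let $\beta_{n,d,k}(a)$ be the infimum of $\det\Sigma_\mu^d(f)$ over all isotropic log-concave $\mu$ on $\mathbb R^n$ and all normalized $f\in\mathcal P_d(\mathbb R^n;\mathbb R^k)$ with $\int\Delta_f\,d\mu\ge a$. For $b>0$, let $\alpha_{n,d,k}(b)$ be the infimum of $\int\Delta_f\,d\mu$ over all such pairs with $\det\Sigma_\mu^d(f)\ge b$. If an admissible set happens to be empty, we set the value to $1$.

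The main task is to show that these infima are strictly positive. Suppose, say, that $\beta_{n,d,k}(a)=0$. Then there is a sequence $(\mu_r,f_r)$ with $\int\Delta_{f_r}\,d\mu_r\ge a$ and $\det\Sigma_{\mu_r}^d(f_r)\to0$. Lemma~\ref{lem-compactness} gives a subsequence with $\mu_r\Rightarrow\mu$, where $\mu$ is isotropic and log-concave, and with $f_r\to f$ coefficientwise. The limit $f$ is still normalized, so $\operatorname{Var}_\mu(f_j)=1>0$ and $\hat f=f$.

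The key step is continuity of the relevant integrals under this convergence. Both $\Delta_{f_r}$ and the entries of $\Sigma^d_{\mu_r}(f_r)$ are polynomials in $x$. Their coefficients are fixed polynomial expressions in the coefficients of $f_r$, and the normalizing constants are already $0$ and $1$. Hence each such integral is a finite combination of products of coefficients and moments $\int x^{\mathbf m}\,d\mu_r$. The proof of Lemma~\ref{lem-compactness} shows that all polynomial moments converge to those of $\mu$ (equation \eqref{eq-monom-conv}), and the coefficients converge as well. Therefore
\[
\int\Delta_{f_r}\,d\mu_r\to\int\Delta_f\,d\mu,
\qquad
\Sigma^d_{\mu_r}(f_r)\to\Sigma^d_\mu(f)
\]
entrywise, and since the determinant is continuous, $\det\Sigma^d_{\mu_r}(f_r)\to\det\Sigma_\mu^d(f)$.

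In the limit we get $\int\Delta_f\,d\mu\ge a>0$ while $\det\Sigma_\mu^d(f)=0$. This contradicts Lemma~\ref{lem-zero-equiv}, which applies because $n\ge k$ and the variances are positive. The symmetric argument handles $\alpha_{n,d,k}(b)$: a sequence with $\det\Sigma^d\ge b$ and $\int\Delta\to0$ converges to a pair with $\det\Sigma_\mu^d(f)\ge b$ and $\int\Delta_f\,d\mu=0$, again contradicting Lemma~\ref{lem-zero-equiv}.

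The only delicate point I expect is making this continuity rigorous: checking that the entries of $\Sigma^d$ are genuinely polynomial in the coefficients and moments of bounded order (degree at most $2d^k$), so that the uniform moment bounds from \cite[Theorem~5.22]{LV07} and \eqref{eq-monom-conv} suffice. Everything else is formal.
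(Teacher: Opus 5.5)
Your proposal is correct and follows essentially the same route as the paper. You normalize to $\hat f$, extract a subsequence via Lemma~\ref{lem-compactness}, pass to the limit in $\int\Delta_{\hat f}\,d\mu$ and in the entries of $\Sigma^d_\mu(f)$ using coefficientwise convergence together with convergence of all polynomial moments \eqref{eq-monom-conv}, and contradict Lemma~\ref{lem-zero-equiv}; defining $\alpha_{n,d,k}$ and $\beta_{n,d,k}$ explicitly as infima just repackages the paper's ``suppose the implication fails'' argument.
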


\begin{proof}
We prove the two implications by contradiction.

First, fix $a>0$. Suppose that the first implication is false. Then there
exist isotropic log-concave measures $\mu_r$ on $\mathbb R^n$ and mappings
$f_r=(f_{1,r},\ldots,f_{k,r})\in\mathcal P_d(\mathbb R^n;\mathbb R^k)$
such that $\operatorname{Var}_\mu(f_{j,r})>0$ for all $j\in\{1,\ldots,k\}$,
\[
\int_{\mathbb R^n}\Delta_{\hat f_r}\,d\mu_r\ge a,
\quad
\text{and}
\quad
\det \Sigma_{\mu_r}^d(f_r)\to0.
\]
Set
\[
g_{j,r}=\hat f_{j,r},
\quad \forall j\in\{1,\ldots,k\}.
\]
Then
\[
\int_{\mathbb R^n}g_{j,r}\,d\mu_r=0,\quad
\int_{\mathbb R^n}g_{j,r}^2\,d\mu_r=1,
\quad
\forall j\in\{1,\ldots,k\}.
\]
By Lemma~\ref{lem-compactness}, passing to a subsequence, we may assume that
$\mu_r\Rightarrow\mu$
weakly, where $\mu$ is an isotropic log-concave measure, and that, for every
$j\in\{1,\ldots,k\}$, the coefficients of $g_{j,r}$ converge to the
coefficients of some polynomial
$g_j\in\mathcal P_d(\mathbb R^n)$.
Moreover,
\[
\int_{\mathbb R^n}g_j\,d\mu=0,\quad
\int_{\mathbb R^n}g_j^2\,d\mu=1,
\quad
\forall j\in\{1,\ldots,k\}.
\]
Let
$g=(g_1,\ldots,g_k)$.
By \eqref{eq-monom-conv} and by the coefficientwise convergence,
\begin{equation}\label{eq-conv-1}
\int_{\mathbb R^n}Q(g)\,d\mu
=
\lim_{r\to\infty}
\int_{\mathbb R^n}Q(g_r)\,d\mu_r
\end{equation}
and
\begin{equation}\label{eq-conv-2}
\int_{\mathbb R^n}U(\nabla g_1,\ldots,\nabla g_k)\,d\mu
=
\lim_{r\to\infty}
\int_{\mathbb R^n}U(\nabla g_{1,r},\ldots,\nabla g_{k,r})\,d\mu_r
\end{equation}
for every pair of polynomials
$Q\in \mathcal{P}_{D_1}(\mathbb{R}^k)$ and
$U\in \mathcal{P}_{D_2}(\mathbb{R}^{kn})$.
In particular, this implies that
\[
\int_{\mathbb R^n}\Delta_g\,d\mu
=
\lim_{r\to\infty}
\int_{\mathbb R^n}\Delta_{\hat f_r}\,d\mu_r
\ge a,
\]
and
\[
\det \Sigma_\mu^d(g)
=
\lim_{r\to\infty}\det \Sigma_{\mu_r}^d(f_r)
=
0.
\]
This contradicts Lemma~\ref{lem-zero-equiv}. Therefore, for every $a>0$, there exists $\beta_{n,d,k}(a)>0$ such that
\[
\int_{\mathbb R^n}\Delta_{\hat f}\,d\mu\ge a
\quad\Longrightarrow\quad
\det \Sigma_\mu^d(f)\ge \beta_{n,d,k}(a).
\]

Now fix $b>0$. Suppose that the second implication is false. Then there
exist isotropic log-concave measures $\mu_r$ on $\mathbb R^n$ and mappings
$f_r=(f_{1,r},\ldots,f_{k,r})\in\mathcal P_d(\mathbb R^n;\mathbb R^k)$
such that $\operatorname{Var}_\mu(f_{j,r})>0$ for all $j\in\{1,\ldots,k\}$,
\[
\det \Sigma_{\mu_r}^d(f_r)\ge b,
\quad
\text{and}
\quad
\int_{\mathbb R^n}\Delta_{\hat f_r}\,d\mu_r\to0.
\]
Again set
\[
g_{j,r}=\hat f_{j,r},
\quad \forall j\in\{1,\ldots,k\}.
\]
Then
\[
\int_{\mathbb R^n}g_{j,r}\,d\mu_r=0,\quad
\int_{\mathbb R^n}g_{j,r}^2\,d\mu_r=1,
\quad
\forall j\in\{1,\ldots,k\}.
\]
By Lemma~\ref{lem-compactness}, passing to a subsequence, we may assume that
$\mu_r\Rightarrow\mu$
weakly, where $\mu$ is an isotropic log-concave measure, and that, for every
$j\in\{1,\ldots,k\}$, the coefficients of $g_{j,r}$ converge to the
coefficients of some polynomial
$g_j\in\mathcal P_d(\mathbb R^n)$.
As before, by \eqref{eq-conv-1} and \eqref{eq-conv-2},
\[
\int_{\mathbb R^n}\Delta_g\,d\mu
=
\lim_{r\to\infty}
\int_{\mathbb R^n}\Delta_{\hat f_r}\,d\mu_r
=
0,
\]
and
\[
\det \Sigma_\mu^d(g)
=
\lim_{r\to\infty}\det \Sigma_{\mu_r}^d(f_r)
\ge b,
\]
where $g=(g_1,\ldots,g_k)$.
This contradicts Lemma~\ref{lem-zero-equiv}.
Therefore, for every $b>0$, there exists $\alpha_{n,d,k}(b)>0$ such that
\[
\det \Sigma_\mu^d(f)\ge b
\quad\Longrightarrow\quad
\int_{\mathbb R^n}\Delta_{\hat f}\,d\mu\ge \alpha_{n,d,k}(b).
\]
This proves the theorem.
\end{proof}

\section{Proof of the main theorems}
\label{sect-main}

We start with the following dimension-dependent version of
Theorems~\ref{th-main-lorentz} and~\ref{th-main-besov}.

\begin{lemma}\label{lem-dimensional-2}
Let $\eta>0$, $n,k,d\in\mathbb N$, $n\ge k$, and $d\ge2$. There exists a
constant $C(\eta,k,d,n)>0$, depending only on $\eta,k,d,n$, such that, for
every absolutely continuous log-concave measure $\mu$ on $\mathbb R^n$ and
every
$f=(f_1,\ldots,f_k)\in\mathcal P_d(\mathbb R^n;\mathbb R^k)$
such that
\[
\sigma_{f_j}^2:=\operatorname{Var}_{\mu}(f_j)>0
\quad
\forall j\in\{1,\ldots,k\},
\]
and
\[
\det \Sigma_\mu^d(f)\ge \eta,
\]
the following two estimates hold.

First, for every $\psi\in C_0^\infty(\mathbb R^k)$ satisfying $0\le\psi\le1$,
one has
\[
\bigl(\sigma_{f_1}^2\cdot\ldots\cdot\sigma_{f_k}^2\bigr)^{\frac{1}{2(k(d-1)+1)}}
\int_{\mathbb R^n}\psi(f)\,d\mu
\le
C(\eta,k,d,n)
\|\psi\|_{L^1(\mathbb R^k)}^{\frac{1}{k(d-1)+1}}.
\]
Second, for every unit vector $\theta\in\mathbb R^k$, every $t>0$, and every
$\varphi\in C_b^\infty(\mathbb R^k)$ satisfying
\[
\|\varphi\|_\infty\le t,
\quad
\|\partial_\theta\varphi\|_\infty\le 1,
\]
one has
\[
\bigl(\sigma_{f_1}^2\cdot\ldots\cdot\sigma_{f_k}^2\bigr)^{\frac{1}{2(k(d-1)+1)}}
\biggl|
\int_{\mathbb R^n}(\partial_\theta\varphi)(f)\,d\mu
\biggr|
\le
C(\eta,k,d,n)t^{\frac{1}{k(d-1)+1}}
\biggl(\sum_{j=1}^k\sigma_{f_j}^2\biggr)^{\frac{k-1}{2(k(d-1)+1)}}.
\]
\end{lemma}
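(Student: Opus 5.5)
The plan is to reduce to the isotropic case and to normalized components, and then to combine Lemma~\ref{lem-lorentz-dimensional} and Lemma~\ref{lem-dimensional} with Theorem~\ref{th-compactness}.

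\textbf{Reduction to isotropic $\mu$.} Since $\mu$ is absolutely continuous and log-concave, it has finite moments and nondegenerate covariance. Hence there is an affine bijection $T(x)=Ax+b$ such that $\nu:=\mu\circ T$ is isotropic and log-concave. Set $g:=f\circ T^{-1}$; then $g\in\mathcal P_d(\mathbb R^n;\mathbb R^k)$ and $\mu\circ f^{-1}=\nu\circ g^{-1}$. Every quantity in the statement is invariant under this change: the variances $\sigma_{f_j}^2$, the normalized components $\hat f$, and hence $\Sigma_\mu^d(f)=\Sigma_\nu^d(g)$. We may therefore assume $\mu$ is isotropic.

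\textbf{Reduction to normalized components.} Write $f_j=m_j+\sigma_{f_j}\hat f_j$ with $m_j=\int f_j\,d\mu$, i.e. $f=m+D\hat f$ with $D=\operatorname{diag}(\sigma_{f_1},\ldots,\sigma_{f_k})$. By Theorem~\ref{th-compactness}, the hypothesis $\det\Sigma_\mu^d(f)\ge\eta$ gives
\[
\int\Delta_{\hat f}\,d\mu\ge\alpha_{n,d,k}(\eta)>0 .
\]

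\textbf{First estimate.} Apply Lemma~\ref{lem-lorentz-dimensional} to $\hat f$ and to $\tilde\psi(y):=\psi(m+Dy)$. This $\tilde\psi$ lies in $C_0^\infty$, satisfies $0\le\tilde\psi\le1$, and has $\|\tilde\psi\|_{L^1}=(\sigma_{f_1}\cdots\sigma_{f_k})^{-1}\|\psi\|_{L^1}$. We obtain
\[
\alpha_{n,d,k}(\eta)^{\frac{1}{2(k(d-1)+1)}}\int\psi(f)\,d\mu\le C(k,d,n)\bigl(\sigma_{f_1}\cdots\sigma_{f_k}\bigr)^{-\frac{1}{k(d-1)+1}}\|\psi\|_{L^1}^{\frac{1}{k(d-1)+1}},
\]
and $(\sigma_{f_1}\cdots\sigma_{f_k})^{1/(k(d-1)+1)}=(\prod_j\sigma_{f_j}^2)^{1/(2(k(d-1)+1))}$, which is exactly the first claim.

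\textbf{Second estimate: the obstacle.} A test function in direction $\theta$ is not mapped to one in a fixed direction under rescaling by $D$. The remedy is to apply Lemma~\ref{lem-dimensional} directly to $f$ (isotropic $\mu$) rather than to $\hat f$; this is where the factor $(\sum_j\sigma_{f_j}^2)^{(k-1)/(2(k(d-1)+1))}$ on the right comes from. It then remains to bound $\int\Delta_f\,d\mu$ from below.

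Since $J_f=DJ_{\hat f}$, we have $\Delta_f=\det(D)^2\Delta_{\hat f}$, so
\[
\int\Delta_f\,d\mu=\prod_j\sigma_{f_j}^2\int\Delta_{\hat f}\,d\mu\ge\alpha_{n,d,k}(\eta)\prod_j\sigma_{f_j}^2 .
\]
Substituting this into Lemma~\ref{lem-dimensional} yields the second claim, with $C(\eta,k,d,n)=C(k,d,n)\,\alpha_{n,d,k}(\eta)^{-1/(2(k(d-1)+1))}$. The first estimate could equally be obtained by applying Lemma~\ref{lem-lorentz-dimensional} to $f$ and using the same identity for $\Delta_f$.

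The only delicate point is checking that the affine reduction really preserves $\Sigma_\mu^d$ and the hypotheses; this is immediate because $\hat f$ is defined intrinsically through $\mu$.
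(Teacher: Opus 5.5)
Your proof is correct and follows essentially the paper's argument: an affine reduction to an isotropic measure; Theorem~\ref{th-compactness} to get $\int\Delta_{\hat f}\,d\mu\ge\alpha_{n,d,k}(\eta)$; the identity $\Delta_f=\prod_j\sigma_{f_j}^2\,\Delta_{\hat f}$; and then Lemmas~\ref{lem-lorentz-dimensional} and~\ref{lem-dimensional} applied to $f$ itself. Your rescaled-$\tilde\psi$ variant for the first estimate is an equivalent minor detour. In the paper's notation the isotropic measure should be written $\mu\circ T^{-1}$, which is what your choice $g=f\circ T^{-1}$ requires.
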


\begin{proof}
Let $m_\mu$ be the barycenter of $\mu$, and let $\Sigma_\mu$ be its covariance
matrix. Since $\mu$ is absolutely continuous, $\Sigma_\mu$ is positive
definite. Let $\nu$ be the image of $\mu$ under the affine map
\[
x\mapsto \Sigma_\mu^{-1/2}(x-m_\mu).
\]
Then $\nu$ is an isotropic log-concave measure on $\mathbb R^n$.

Let
\[
g_j(y)=f_j(m_\mu+\Sigma_\mu^{1/2}y)
\quad
\forall j\in\{1,\ldots,k\},
\]
and set
$g=(g_1,\ldots,g_k)$.
Then
$g\in\mathcal P_d(\mathbb R^n;\mathbb R^k)$,
and the distributions of $g$ under $\nu$ and of $f$ under $\mu$ coincide.
In particular,
\[
\operatorname{Var}_\nu(g_j)=\operatorname{Var}_\mu(f_j)=\sigma_{f_j}^2
\quad
\forall j\in\{1,\ldots,k\}.
\]
With the normalizations taken with respect to $\nu$ and $\mu$, respectively,
we have
\[
\widehat g_j(y)=\widehat f_j(m_\mu+\Sigma_\mu^{1/2}y),
\]
and hence
\[
\Sigma_\nu^d(g)=\Sigma_\mu^d(f).
\]
Thus,
\[
\det \Sigma_\nu^d(g)\ge\eta.
\]
By Theorem~\ref{th-compactness} applied to $\nu$ and $g$, there exists
$\alpha_{n,d,k}(\eta)>0$ such that
\[
\int_{\mathbb R^n}\Delta_{\widehat g}\,d\nu
\ge
\alpha_{n,d,k}(\eta).
\]
Since
\[
g_j=\sigma_{f_j}\widehat g_j+\int_{\mathbb R^n}g_j\,d\nu,
\]
we have
\[
\nabla g_j=\sigma_{f_j}\nabla\widehat g_j.
\]
Therefore,
\[
\Delta_g
=
\bigl(\sigma_{f_1}^2\cdot\ldots\cdot\sigma_{f_k}^2\bigr)
\Delta_{\widehat g},
\]
and consequently
\begin{equation}\label{eq-det-lower-est}
\int_{\mathbb R^n}\Delta_g\,d\nu
\ge
\alpha_{n,d,k}(\eta)
\bigl(\sigma_{f_1}^2\cdot\ldots\cdot\sigma_{f_k}^2\bigr).
\end{equation}

Applying Lemma~\ref{lem-lorentz-dimensional} to $\nu$ and $g$, and using
\eqref{eq-det-lower-est}, we obtain
\[
\bigl(\sigma_{f_1}^2\cdot\ldots\cdot\sigma_{f_k}^2\bigr)^{\frac{1}{2(k(d-1)+1)}}
\int_{\mathbb R^n}\psi(g)\,d\nu
\le
C_1(\eta,k,d,n)
\|\psi\|_{L^1(\mathbb R^k)}^{\frac{1}{k(d-1)+1}},
\]
where
\[
C_1(\eta,k,d,n)
=
C_1(k,d,n)\alpha_{n,d,k}(\eta)^{-\frac{1}{2(k(d-1)+1)}}
\]
and $C_1(k,d,n)$ is the constant from Lemma~\ref{lem-lorentz-dimensional}.
Since
\[
\int_{\mathbb R^n}\psi(g)\,d\nu
=
\int_{\mathbb R^n}\psi(f)\,d\mu,
\]
the first estimate follows.

Applying Lemma~\ref{lem-dimensional} to $\nu$ and $g$, and using
\eqref{eq-det-lower-est}, we obtain
\[
\bigl(\sigma_{f_1}^2\cdot\ldots\cdot\sigma_{f_k}^2\bigr)^{\frac{1}{2(k(d-1)+1)}}
\biggl|
\int_{\mathbb R^n}(\partial_\theta\varphi)(g)\,d\nu
\biggr|
\le
C_2(\eta,k,d,n)
t^{\frac{1}{k(d-1)+1}}
\biggl(\sum_{j=1}^k\sigma_{f_j}^2\biggr)^{\frac{k-1}{2(k(d-1)+1)}},
\]
where
\[
C_2(\eta,k,d,n)
=
C_2(k,d,n)\alpha_{n,d,k}(\eta)^{-\frac{1}{2(k(d-1)+1)}}
\]
and $C_2(k,d,n)$ is the constant from Lemma~\ref{lem-dimensional}.
Since
\[
\int_{\mathbb R^n}(\partial_\theta\varphi)(g)\,d\nu
=
\int_{\mathbb R^n}(\partial_\theta\varphi)(f)\,d\mu,
\]
the second estimate follows. 
This proves the lemma.
\end{proof}

The key ingredient in the proof of Theorems~\ref{th-main-lorentz} and~\ref{th-main-besov} is the following localization lemma
of Fradelizi and Gu\'edon, which allows one to upgrade dimension-dependent
results for polynomials and log-concave measures to dimension-free ones.

\begin{theorem}[see \cite{FrGue}]\label{loc-lem}
Let $K$ be a compact convex set in $\mathbb R^n$, let
$p\in\mathbb N$, and let
\[
u_j\colon K\to\mathbb R,
\quad j=1,\ldots,p,
\]
be continuous functions. Let $P_{u_1,\ldots,u_p}(K)\ne\emptyset$ be the set of all
log-concave measures $\nu$ supported in $K$ such that
\[
\int_K u_j\,d\nu\ge0,
\quad j=1,\ldots,p.
\]
Let
\[
\Phi\colon P(K)\to\mathbb R
\]
be a convex continuous functional, where $P(K)$ is the space of all Borel
probability measures on $K$, equipped with the weak topology. Then
\[
\sup_{\nu\in P_{u_1,\ldots,u_p}(K)} \Phi(\nu)
\]
is attained at a log-concave measure $\nu\in P_{u_1,\ldots,u_p}(K)$ such that
the affine span $S(\nu)$ of $\operatorname{supp}(\nu)$ satisfies
\[
\dim S(\nu)\le p.
\]
Consequently,
\[
\sup_{\mu\in P_{u_1,\ldots,u_p}(K)} \Phi(\nu)
=
\sup_{\substack{\nu\in P_{u_1,\ldots,u_p}(K)\\ \dim S(\nu)\le p}}
\Phi(\nu).
\]
\end{theorem}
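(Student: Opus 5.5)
The plan is to combine a compactness/extreme-point argument with a geometric bisection argument, in the spirit of Fradelizi and Gu\'edon.

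\textbf{Reduction to extreme points.} First I will show that $P_{u_1,\ldots,u_p}(K)$ is compact in $P(K)$. Since $K$ is compact, $P(K)$ is weakly compact. The class of log-concave measures supported in $K$ is weakly closed: the defining inequality $\mu(sA+(1-s)B)\ge\mu(A)^s\mu(B)^{1-s}$ passes to weak limits, by the standard approximation of compact sets by closed neighbourhoods. Each constraint $\nu\mapsto\int_K u_j\,d\nu$ is weakly continuous because $u_j$ is continuous on $K$, so the constraint set is closed.

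Let $\mathcal C$ be the closed convex hull of $P_{u_1,\ldots,u_p}(K)$ in $P(K)$. It is a compact convex set in a locally convex space. By Bauer's maximum principle, the convex continuous functional $\Phi$ attains its maximum over $\mathcal C$ at an extreme point of $\mathcal C$. By Milman's converse to Krein--Milman, every extreme point of $\mathcal C$ lies in the closure of $P_{u_1,\ldots,u_p}(K)$, which is the set itself. Hence
\[
\sup_{P_{u_1,\ldots,u_p}(K)}\Phi
\]
is attained at some $\nu\in P_{u_1,\ldots,u_p}(K)$ that is an extreme point of $\mathcal C$. It therefore suffices to show that every such extreme point satisfies $\dim S(\nu)\le p$.

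\textbf{Bisection argument.} Suppose $\nu$ is extreme and $m:=\dim S(\nu)\ge p+1$. Then $\nu$ has a log-concave density on the $m$-dimensional affine span $S(\nu)$, with respect to Lebesgue measure there. Fix an interior point $x_0$ of $\operatorname{supp}\nu$ relative to $S(\nu)$. For a unit vector $\theta$ in the direction space of $S(\nu)$, which is a sphere $S^{m-1}$ with $m-1\ge p$, consider the half-space
\[
H_\theta^+=\{x\colon\langle x-x_0,\theta\rangle\ge0\}.
\]
Define
\[
F_j(\theta):=\int_{H_\theta^+}u_j\,d\nu-\nu(H_\theta^+)\int_K u_j\,d\nu,
\qquad j=1,\ldots,p.
\]
Since the hyperplane $\partial H_\theta^+$ is $\nu$-null, $F=(F_1,\ldots,F_p)$ is continuous in $\theta$ and odd. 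By Borsuk--Ulam on $S^{m-1}\supseteq S^{p}$, there is $\theta$ with $F(\theta)=0$.

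Let $\lambda:=\nu(H_\theta^+)$, which lies in $(0,1)$ because $x_0$ is interior. Let $\nu_1,\nu_2$ be the normalized restrictions of $\nu$ to $H_\theta^+$ and to its complement. Both are log-concave, being restrictions of a log-concave measure to convex sets. By the choice of $\theta$,
\[
\int u_j\,d\nu_i=\int u_j\,d\nu\ge0 \quad\text{for } i=1,2,
\]
so $\nu_1,\nu_2\in P_{u_1,\ldots,u_p}(K)$. Moreover $\nu=\lambda\nu_1+(1-\lambda)\nu_2$ with $\nu_1\ne\nu_2$, since their supports lie in opposite closed half-spaces and each has positive mass in the open part. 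This contradicts extremality, so $\dim S(\nu)\le p$.

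\textbf{Main obstacle.} I expect the main technical difficulty to be the topological step, where some details need checking:
\begin{itemize}
\item continuity of $F$ in $\theta$, which uses absolute continuity of $\nu$ on $S(\nu)$ so that boundary hyperplanes carry no mass;
\item arranging the Borsuk--Ulam dimension count so that $m\ge p+1$ is exactly what is needed;
\item verifying that $\nu_1\ne\nu_2$.
\end{itemize}
A secondary point is the weak closedness of log-concavity, together with the fact that weak limits may drop dimension. This is harmless, since lower-dimensional log-concave measures are included in the class. The final equality of suprema then follows immediately, because the maximizer belongs to the restricted class.
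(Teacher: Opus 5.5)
Your proof is correct. It is essentially the standard Fradelizi--Gu\'edon argument, which the paper imports without proof: compactness of $P_{u_1,\ldots,u_p}(K)$, Bauer's maximum principle and Milman's converse to Krein--Milman reduce the supremum to an extreme point of the closed convex hull lying in $P_{u_1,\ldots,u_p}(K)$, and an extreme point with $\dim S(\nu)\ge p+1$ is then split into two distinct members of the constraint class by a Borsuk--Ulam hyperplane through a relative-interior point. The details you flag all go through as you describe: continuity and oddness of $F$ via $\nu$-null hyperplanes, $\lambda\in(0,1)$, $\nu_1\ne\nu_2$, weak closedness of log-concavity, and Borell's theorem giving the density on $S(\nu)$.
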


We are now in a position to prove our main results.

\medskip

\noindent
{\bf Proof of Theorems \ref{th-main-lorentz} and \ref{th-main-besov}.}
Fix $\psi\in C_0^\infty(\mathbb{R}^k)$ with $0\le \psi\le 1$, $\theta\in \mathbb{R}^k$ with $|\theta|=1$, and $\varphi\in C_b^\infty(\mathbb{R}^k)$, satisfying
$\|\varphi\|_\infty\le t$, 
$\|\partial_\theta \varphi\|_\infty\le 1$.

{\bf Step 1.} 
First assume that $\mu$ is compactly supported. Let $K\subset\mathbb R^n$ be
a compact convex set such that
\[
\operatorname{supp}(\mu)\subset K.
\]

For the trace of $\Sigma_\mu^d(f)$, we have
\[
\operatorname{Tr}\Sigma_\mu^d(f)
=
\sum_{1\le |\mathbf m|\le d^{k-1}}
\operatorname{Var}_\mu(\hat f^{\mathbf m}).
\]
By the equivalence of polynomial moments with respect to log-concave measures \cite{Bobkov00},
for every $\mathbf m$ with $1\le |\mathbf m|\le d^{k-1}$,
\[
\operatorname{Var}_\mu(\hat f^{\mathbf m})
\le
\|\hat f^{\mathbf m}\|_{L^2(\mu)}^2
\le
\|\hat f_1\|_{L^{2|\mathbf m|}(\mu)}^{2m_1}
\ldots
\|\hat f_k\|_{L^{2|\mathbf m|}(\mu)}^{2m_k}
\le C_1(k,d),
\]
because
\[
\|\hat f_j\|_{L^2(\mu)}=1,
\quad \forall j\in\{1,\ldots,k\}.
\]
Therefore,
\[
\operatorname{Tr}\Sigma_\mu^d(f)\le C_2(k,d).
\]
Let
\[
M=M(k, d):=\dim \mathcal P_{d^{k-1}}(\mathbb R^k)-1.
\]
If $\lambda_1,\ldots,\lambda_M$ are the eigenvalues of $\Sigma_\mu^d(f)$ and
\[
\lambda_1=\lambda_{\min}(\Sigma_\mu^d(f)):=\min\{\lambda_1, \ldots, \lambda_M\},
\] 
then
\[
\det \Sigma_\mu^d(f)
=
\lambda_1\ldots\lambda_M
\le
\lambda_1\bigl(\operatorname{Tr}\Sigma_\mu^d(f)\bigr)^{M-1}\le \lambda_1\, \bigl(C_2(k,d)\bigr)^{M-1}.
\]
Thus,\[
\lambda_{\min}(\Sigma_\mu^d(f))
\ge
\eta\, \bigl(C_2(k,d)\bigr)^{-(M-1)}
=: \eta_0.
\]

Let
\[
\delta:=\frac{\eta_0}{4C_2(k,d)}.
\]
Let $\mathcal N_\delta\subset S^{M-1}$ be a $\delta$-net on the sphere of $\mathbb{R}^M$. We choose it so that
\[
|\mathcal N_\delta|
\le
\Bigl(1+\frac{2}{\delta}\Bigr)^M
=:C_3(\eta,k,d).
\]
For every $\xi\in\mathcal N_\delta$ we then have
\[
\operatorname{Var}_\mu
\bigl(\langle F_\mu^d(f),\xi\rangle\bigr)
=
\langle \Sigma_\mu^d(f)\xi,\xi\rangle
\ge \eta_0.
\]

Let
\[
I_j:=\int_{\mathbb R^n}f_j\,d\mu,
\qquad
\sigma_j^2:=\operatorname{Var}_\mu(f_j),
\qquad j=1,\ldots,k,
\]
\[
I_{\mathbf m}:=
\int_{\mathbb R^n}\hat f^{\mathbf m}\,d\mu,
\qquad
\sigma_{\mathbf m}^2:=
\operatorname{Var}_\mu(\hat f^{\mathbf m}),
\qquad
1\le|\mathbf m|\le d^{k-1},
\]
and
\[
I_\xi:=
\int_{\mathbb R^n}\langle F_\mu^d(f),\xi\rangle\,d\mu,
\qquad
\sigma_\xi^2:=
\operatorname{Var}_\mu
\bigl(\langle F_\mu^d(f),\xi\rangle\bigr),
\qquad
\xi\in\mathcal N_\delta.
\]

Let $P_u(K)$ be the set of all log-concave measures $\nu$
supported in $K$ satisfying the following inequalities:
\begin{equation}\label{variance-coincidence-1}
\int_K (f_j-I_j)\,d\nu\ge0,
\qquad
\int_K (I_j-f_j)\,d\nu\ge0,
\qquad
j=1,\ldots,k,
\end{equation}
\begin{equation}\label{variance-coincidence-2}
\int_K \bigl((f_j-I_j)^2-\sigma_j^2\bigr)\,d\nu\ge0,
\qquad
\int_K \bigl(\sigma_j^2-(f_j-I_j)^2\bigr)\,d\nu\ge0,
\qquad
j=1,\ldots,k,
\end{equation}
\begin{equation}\label{trace-coincidence-1}
\int_K (\hat f^{\mathbf m}-I_{\mathbf m})\,d\nu\ge0,
\qquad
\int_K (I_{\mathbf m}-\hat f^{\mathbf m})\,d\nu\ge0,
\qquad
1\le|\mathbf m|\le d^{k-1},
\end{equation}
\begin{equation}\label{trace-coincidence-2}
\int_K \bigl((\hat f^{\mathbf m}-I_{\mathbf m})^2-\sigma_{\mathbf m}^2\bigr)\,d\nu\ge0,
\qquad
\int_K \bigl(\sigma_{\mathbf m}^2-(\hat f^{\mathbf m}-I_{\mathbf m})^2\bigr)\,d\nu\ge0,
\qquad
1\le|\mathbf m|\le d^{k-1},
\end{equation}
\begin{equation}\label{nondeg-coincidence-1}
\int_K \bigl(\langle F_\mu^d(f),\xi\rangle-I_\xi\bigr)\,d\nu\ge0,
\qquad
\int_K \bigl(I_\xi-\langle F_\mu^d(f),\xi\rangle\bigr)\,d\nu\ge0,
\qquad
\xi\in\mathcal N_\delta,
\end{equation}
and
\begin{equation}\label{nondeg-coincidence-2}
\int_K
\bigl((\langle F_\mu^d(f),\xi\rangle-I_\xi)^2-\sigma_\xi^2\bigr)\,d\nu\ge0,
\qquad
\int_K
\bigl(\sigma_\xi^2-(\langle F_\mu^d(f),\xi\rangle-I_\xi)^2\bigr)\,d\nu\ge0,
\qquad
\xi\in\mathcal N_\delta.
\end{equation}
The total number of conditions is
\[
p:=p(\eta,k,d),
\]
depending only on $\eta,k,d$. Clearly, $\mu\in P_u(K)$.

\smallskip

{\bf Step 2.}
For every $\nu\in P_u(K)$, conditions \eqref{variance-coincidence-1} and
\eqref{variance-coincidence-2} imply
\[
\int_K f_j\,d\nu=I_j,
\qquad
\operatorname{Var}_\nu(f_j)=\sigma_j^2,
\qquad
j=1,\ldots,k.
\]
Hence the normalized components of $f$ with respect to $\nu$ coincide with
the functions $\hat f_j$ defined with respect to $\mu$. In particular,
\[
F_\nu^d(f)=F_\mu^d(f).
\]
Conditions \eqref{trace-coincidence-1} and \eqref{trace-coincidence-2} imply
\[
\operatorname{Tr}\Sigma_\nu^d(f)=\operatorname{Tr}\Sigma_\mu^d(f)\le C_2(k, d).
\]
Finally, conditions \eqref{nondeg-coincidence-1} and
\eqref{nondeg-coincidence-2} imply that, for every $\xi\in\mathcal N_\delta$,
\[
\langle \Sigma_\nu^d(f)\xi,\xi\rangle
=
\operatorname{Var}_\nu
\bigl(\langle F_\nu^d(f),\xi\rangle\bigr)
=
\operatorname{Var}_\mu
\bigl(\langle F_\mu^d(f),\xi\rangle\bigr)
\ge\eta_0.
\]

We now show that $\Sigma_\nu^d(f)$ is uniformly nondegenerate for every
$\nu\in P_u(K)$. Let $\zeta\in S^{M-1}$. Choose $\xi\in\mathcal N_\delta$
such that
\[
|\zeta-\xi|\le\delta.
\]
Then
\[
\langle \Sigma_\nu^d(f)\zeta,\zeta\rangle
=
\langle \Sigma_\nu^d(f)\xi,\xi\rangle
+
2\langle \Sigma_\nu^d(f)\xi,\zeta-\xi\rangle
+
\langle \Sigma_\nu^d(f)(\zeta-\xi),\zeta-\xi\rangle.
\]
Since $\Sigma_\nu^d(f)$ is nonnegative definite,
\[
\langle \Sigma_\nu^d(f)(\zeta-\xi),\zeta-\xi\rangle\ge0.
\]
Moreover,
\[
|\langle \Sigma_\nu^d(f)\xi,\zeta-\xi\rangle|
\le
\|\Sigma_\nu^d(f)\|_{\operatorname{op}}|\zeta-\xi|
\le
\delta\|\Sigma_\nu^d(f)\|_{\operatorname{op}}
\le \delta \operatorname{Tr}\Sigma_\nu^d(f)\le \delta\, C_2(k, d) = \frac{\eta_0}{4}.
\]
Therefore,
\[
\langle \Sigma_\nu^d(f)\zeta,\zeta\rangle
\ge
\langle \Sigma_\nu^d(f)\xi,\xi\rangle
- \frac{\eta_0}{2}\ge \frac{\eta_0}{2}.
\]
Thus,
\[
\lambda_{\min}(\Sigma_\nu^d(f))\ge\frac{\eta_0}{2},
\]
and therefore
\[
\det \Sigma_\nu^d(f)\ge
\Bigl(\frac{\eta_0}{2}\Bigr)^M
=:C_4(\eta,k,d)>0
\quad
\forall \nu\in P_u(K).
\]
\smallskip
{\bf Step 3.}
Let
\[
\Phi_1(\nu)=\int_K\psi(f)\,d\nu
\]
and
\[
\Phi_2(\nu)
=
\biggl|
\int_K(\partial_\theta\varphi)(f)\,d\nu
\biggr|.
\]
Both of them are convex continuous functionals on $P(K)$. Since $\mu\in P_u(K)$,
Theorem~\ref{loc-lem} gives
\begin{equation}\label{eq-comparison-1}
\int_K\psi(f)\,d\mu
\le
\sup_{\substack{\nu\in P_u(K)\\ \dim S(\nu)\le p}}
\int_K\psi(f)\,d\nu
\end{equation}
and
\begin{equation}\label{eq-comparison-2}
\biggl|
\int_{\mathbb R^n}(\partial_\theta\varphi)(f)\,d\mu
\biggr|
\le
\sup_{\substack{\nu\in P_u(K)\\ \dim S(\nu)\le p}}
\biggl|
\int_K(\partial_\theta\varphi)(f)\,d\nu
\biggr|.
\end{equation}
Let $\nu\in P_u(K)$ be such that $\dim S(\nu)\le p$. 
Since $\nu$, viewed as a measure on the affine span $S(\nu)$ of its support,
is absolutely continuous with respect to the Lebesgue measure on $S(\nu)$, and
since
\[
\det \Sigma_\nu^d(f)\ge C_4(\eta,k,d)>0,
\]
Lemma~\ref{lem-n-less-k}, applied on $S(\nu)$, implies
\[
\dim S(\nu)\ge k.
\]
Let
\[
C_5(\eta, k, d):=\max\bigl\{C\bigl(C_4(\eta, k, d),k,d, j\bigr)\colon k\le j\le p\bigr\},
\]
where 
$C(\cdot, k, d, j)$ is the constant from Lemma
\ref{lem-dimensional-2}.
Viewing $\nu$ as an absolutely continuous log-concave measure on $S(\nu)$ and applying
Lemma~\ref{lem-dimensional-2} in dimension $\dim S(\nu)$, we obtain
\[
\int_K\psi(f)\,d\nu
\le
C_5(\eta,k,d)
a^{-\frac{k}{2(k(d-1)+1)}}
\|\psi\|_{L^1(\mathbb R^k)}^{\frac{1}{k(d-1)+1}}
\]
and
\[
\biggl|
\int_K(\partial_\theta\varphi)(f)\,d\nu
\biggr|
\le
C_5(\eta,k,d)k^{\frac{k-1}{2(k(d-1)+1)}}
\bigl(a^{-k}b^{k-1}\bigr)^{\frac{1}{2(k(d-1)+1)}}
t^{\frac{1}{k(d-1)+1}}.
\]
By \eqref{eq-comparison-1} and \eqref{eq-comparison-2}, both estimates are also valid for the initial measure $\mu$.
This proves the compactly supported case.

\smallskip

{\bf Step 4.}
Let now $\mu$ be an arbitrary log-concave measure on $\mathbb R^n$. Let
$B_N$ be the Euclidean ball of radius $N$ centered at the origin, and define
\[
\mu_N:=\bigl(\mu(B_N)\bigr)^{-1}I_{B_N}\cdot \mu.
\]
For all sufficiently large $N$, this measure is well defined. Since $B_N$ is
convex, $\mu_N$ is again log-concave and compactly supported.
Moreover, for every $h\in L^1(\mu)$,
\[
\int_{\mathbb R^n}h\,d\mu_N
=
\frac{1}{\mu(B_N)}\int_{B_N}h\,d\mu
\to
\int_{\mathbb R^n}h\,d\mu.
\]
In particular, 
\[
\operatorname{Var}_{\mu_N}(f_j)\to
\operatorname{Var}_{\mu}(f_j)
\quad \forall j\in\{1,\ldots,k\},
\]
and
\[
\Sigma_{\mu_N}^d(f)\to \Sigma_\mu^d(f)
\]
elementwise. Consequently,
\[
\det \Sigma_{\mu_N}^d(f)\to \det \Sigma_\mu^d(f).
\]
Therefore, there exists $N_0$ such that, for all $N\ge N_0$,
\[
\frac{a}{2}\le \operatorname{Var}_{\mu_N}(f_j)\le 2b
\quad \forall j\in\{1,\ldots,k\},
\]
and
\[
\det \Sigma_{\mu_N}^d(f)\ge \frac{\eta}{2}.
\]

By the already proved compactly supported case, applied to $\mu_N$ with
parameters $a/2$, $2b$, and $\eta/2$, we obtain
\[
\int_{\mathbb{R}^n}\psi(f)\,d\mu_N
\le
C_6(\eta,k,d)
a^{-\frac{k}{2(k(d-1)+1)}}
\|\psi\|_{L^1(\mathbb R^k)}^{\frac{1}{k(d-1)+1}}
\]
and
\[
\biggl|
\int_{\mathbb R^n}(\partial_\theta\varphi)(f)\,d\mu_N
\biggr|
\le
C_6(\eta,k,d)
\bigl(a^{-k}b^{k-1}\bigr)^{\frac{1}{2(k(d-1)+1)}}
t^{\frac{1}{k(d-1)+1}}
\]
for every $N\ge N_0$. Finally, since $\psi(f)$ and
$(\partial_\theta\varphi)(f)$ are bounded, we may pass to the limit as
$N\to\infty$ and obtain
\begin{equation}\label{eq-resulting-1}
\int_{\mathbb{R}^n}\psi(f)\,d\mu
\le
C_6(\eta,k,d)
a^{-\frac{k}{2(k(d-1)+1)}}
\|\psi\|_{L^1(\mathbb R^k)}^{\frac{1}{k(d-1)+1}}
\end{equation}
and
\begin{equation}\label{eq-resulting-2}
\biggl|
\int_{\mathbb R^n}(\partial_\theta\varphi)(f)\,d\mu
\biggr|
\le
C_6(\eta,k,d)
\bigl(a^{-k}b^{k-1}\bigr)^{\frac{1}{2(k(d-1)+1)}}
t^{\frac{1}{k(d-1)+1}}.
\end{equation}

\smallskip

{\bf Step 5.}
Since $\psi\in C_0^\infty(\mathbb R^k)$ with $0\le \psi\le 1$ was arbitrary,
\eqref{eq-resulting-1} and Lemma~\ref{lem-lorentz-equivalence} imply
\[
\mu(f\in A)
\le
C_6(\eta,k,d)
a^{-\frac{k}{2(k(d-1)+1)}}
\bigl(\lambda_k(A)\bigr)^{\frac{1}{k(d-1)+1}}
\]
for every Borel set $A\subset \mathbb R^k$.
Similarly, since $\varphi\in C_b^\infty(\mathbb R^k)$ and $\theta$ were
arbitrary, by the definition of $\sigma(\mu\circ f^{-1},t)$,
\eqref{eq-resulting-2} implies
\[
\sigma(\mu\circ f^{-1},t)
\le
C_6(\eta,k,d)
\bigl(a^{-k}b^{k-1}\bigr)^{\frac{1}{2(k(d-1)+1)}}
t^{\frac{1}{k(d-1)+1}}.
\]
This proves Theorems \ref{th-main-lorentz} and \ref{th-main-besov}.
\qed

\section{Probability applications}
\label{sect-probab}

We begin with the following extension of \cite[Lemma~2.4]{NP}.

\begin{lemma}\label{lem-moments-lc-closure}
Let $X=(X_1,\ldots,X_k)\in\mathcal P_{d,k}^{\rm lc}$. Then $X$ has finite
moments of all orders and $\Sigma^d(X)$ is well defined. Moreover, if
$X_m=(X_{m,1},\ldots,X_{m,k})\in\mathcal P_{d,k}^{\rm lc}$ converges in
distribution to $X$, then, for every $p\ge1$,
\[
\max_{1\le j\le k}\sup_{m\ge1}
\mathbb E\bigl[|X_{m,j}|^p\bigr]<\infty.
\]
\end{lemma}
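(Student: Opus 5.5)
The proof rests on the dimension-free moment equivalence for polynomials with respect to log-concave measures \cite{Bobkov00}. For every $p\ge2$ there is $C(p,d)$ such that $\|g\|_{L^p(\mu)}\le C(p,d)\|g\|_{L^2(\mu)}$ for all $n$, all log-concave $\mu$ on $\mathbb R^n$ and all $g\in\mathcal P_d(\mathbb R^n)$. We first treat the second assertion, since the first follows from it. Let $X$ be the limit in distribution of random vectors $Y_r=f_r(\xi^{(r)})$, where $f_r\in\mathcal P_d(\mathbb R^{n_r};\mathbb R^k)$ and $\xi^{(r)}$ is log-concave.

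We first show that $\sup_r\mathbb E[Y_{r,j}^2]<\infty$ for each $j$. We argue by contradiction, with a Paley--Zygmund argument. Suppose $s_r^2:=\mathbb E[Y_{r,j}^2]\to\infty$ along a subsequence. Moment equivalence with $p=4$ gives
\[
\mathbb P\bigl(|Y_{r,j}|\ge s_r/2\bigr)\ge \tfrac{9}{16}\,\frac{(\mathbb E Y_{r,j}^2)^2}{\mathbb E Y_{r,j}^4}\ge c(d)>0 .
\]
Hence $\mathbb P(|Y_{r,j}|\ge s_r/2)$ stays bounded below while $s_r\to\infty$. This contradicts tightness of the convergent sequence $Y_{r,j}$. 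Once the second moments are bounded, moment equivalence bounds every moment uniformly:
\[
\sup_r\mathbb E|Y_{r,j}|^p\le C(p,d)^p\sup_r\bigl(\mathbb E Y_{r,j}^2\bigr)^{p/2}<\infty .
\]
Uniform integrability then passes these bounds to the limit, so $X$ has finite moments of all orders.

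The sequence $X_m$ in the lemma consists of elements of $\mathcal P_{d,k}^{\rm lc}$, which are limits rather than genuine polynomial vectors, so the argument above does not apply to them directly. To handle this we first transfer moment equivalence to the closure. For a polynomial limit $Y_r\to X$ we have uniform integrability of all powers, so $\mathbb E|X_j|^p=\lim_r\mathbb E|Y_{r,j}|^p$ for every $p$. The inequality $\|X_j\|_p\le C(p,d)\|X_j\|_2$ therefore holds for every $X\in\mathcal P_{d,k}^{\rm lc}$, with the same constant. We then rerun the Paley--Zygmund and tightness argument directly on the sequence $X_m\to X$, which gives $\sup_m\mathbb E X_{m,j}^2<\infty$ and hence all the uniform moment bounds.

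It remains to check that $\Sigma^d(X)$ is well defined. If some $X_j$ is a.s. constant, the normalization in Definition~\ref{def-ext} fails, and we read the lemma as concerning $X$ with all $\operatorname{Var}(X_j)>0$. In that case, finiteness of all moments and H\"older's inequality give $\hat X^{\mathbf m}\in L^2$ for every multi-index, so $\Sigma^d(X)$ is defined. The main step is the Paley--Zygmund contradiction with tightness. The transfer to the closure is routine but must be stated, because the lemma's sequence lives in $\mathcal P_{d,k}^{\rm lc}$.
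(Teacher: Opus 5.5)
Your proof is correct, but it takes a different route from the paper's.

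The paper never lifts anything to the closure. For each $m$ it goes back to polynomial approximants $Y^m_\ell=f_{m,\ell}(\xi_{m,\ell})$ of $X_m$. Tightness of $(X_{m,j})_m$ at level $3/4$ plus the Portmanteau theorem give $\mathbb P(|Y^m_{\ell,j}|<R)\ge 1/2$ for $\ell\ge\ell_m$. The Carbery--Wright small-ball inequality \eqref{CW-est} then bounds $\mathbb E|Y^m_{\ell,j}|^2\le (2CdR^{1/d})^{2d}$ uniformly in $m$ and $\ell\ge\ell_m$. Moment equivalence and truncation by $\min\{|x|^p,N\}$ push these bounds up to $X_m$ and then to $X$.

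You differ in two ways.
\begin{itemize}
\item You replace the small-ball inequality by Paley--Zygmund combined with the $L^4$--$L^2$ moment comparison. This is a weaker input, and the comparison is all you need.
\item You first show that $\|X_j\|_p\le C(p,d)\|X_j\|_2$ survives passage to $\mathcal P_{d,k}^{\rm lc}$ with the same constant. This lets you argue directly on $X_m$ and avoids the double index $(m,\ell)$.
\end{itemize}

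Your version produces a reusable structural fact about the closure class. The paper's version is direct rather than by contradiction and gives an explicit second-moment bound in terms of the tightness radius. Your argument is equally quantitative once unwound: $\mathbb E X_{m,j}^2\le 4R^2$ as soon as $\sup_m\mathbb P(|X_{m,j}|\ge R)<\tfrac{9}{16}C(4,d)^{-4}$.

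Your caveat that $\Sigma^d(X)$ only makes sense when every $\operatorname{Var}(X_j)>0$ is a fair reading of a point the paper leaves implicit.
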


\begin{proof}
Let
$X_m\in\mathcal P_{d,k}^{\rm lc}$ converge in distribution to $X$. Fix
$j\in\{1,\ldots,k\}$ and $p\ge1$. Since $X_{m,j}$ converges in distribution
to $X_j$, the sequence $X_{m,j}$ is tight. Hence there exists $R>0$ such that
\[
\mathbb P\bigl(|X_{m,j}|<R\bigr)\ge \frac34
\quad
\forall m\in\mathbb N.
\]
For every $m$, by the definition of $\mathcal P_{d,k}^{\rm lc}$, there exists
a sequence
\[
Y_{\ell}^m=f_{m,\ell}(\xi_{m,\ell}),\quad \ell\in\mathbb N,
\]
converging in distribution to $X_m$, where
$f_{m,\ell}\in\mathcal P_d(\mathbb R^{n_{m,\ell}};\mathbb R^k)$ and
$\xi_{m,\ell}$ has a log-concave distribution. Since the set
$\{x\in\mathbb R\colon |x|<R\}$ is open, the Portmanteau theorem, see
\cite[Theorem~2.1]{Billingsley99}, gives
\[
\liminf_{\ell\to\infty}
\mathbb P\bigl(|Y^m_{\ell,j}|<R\bigr)
\ge
\mathbb P\bigl(|X_{m,j}|<R\bigr)
\ge \frac34.
\]
Therefore, for every $m$ there exists $\ell_m$ such that
\[
\mathbb P\bigl(|Y^m_{\ell,j}|<R\bigr)\ge \frac12
\quad
\forall \ell\ge \ell_m.
\]
By the Carbery--Wright inequality \eqref{CW-est},
\[
\bigl(\mathbb E\bigl[|Y^m_{\ell,j}|^2\bigr]\bigr)^{1/(2d)}
\mathbb P\bigl(|Y^m_{\ell,j}|<R\bigr)
\le
C d R^{1/d}
\quad
\forall \ell\ge \ell_m.
\]
Thus,
\[
\sup_{m\ge1}\sup_{\ell\ge \ell_m}
\mathbb E\bigl[|Y^m_{\ell,j}|^2\bigr]<\infty.
\]
The equivalence of all moments for polynomials of fixed degree with respect
to log-concave measures gives, for every $p\ge1$,
\[
\sup_{m\ge1}\sup_{\ell\ge \ell_m}
\mathbb E\bigl[|Y^m_{\ell,j}|^p\bigr]<\infty.
\]
For $N\in\mathbb N$, set
\[
\psi_N(x)=\min\{|x|^p,N\}.
\]
Since $\psi_N$ is bounded and continuous and $Y^m_\ell$ converges in
distribution to $X_m$ as $\ell\to\infty$, we have
\[
\mathbb E\bigl[\psi_N(X_{m,j})\bigr]
=
\lim_{\ell\to\infty}\mathbb E\bigl[\psi_N(Y^m_{\ell,j})\bigr]
\le 
\sup_{r\ge1}\sup_{\ell\ge \ell_r}
\mathbb E\bigl[|Y^r_{\ell,j}|^p\bigr].
\]
Letting $N\to\infty$ and using the monotone convergence theorem, we get
\[
\sup_{m\ge1}\mathbb E\bigl[|X_{m,j}|^p\bigr]
\le 
\sup_{r\ge1}\sup_{\ell\ge \ell_r}
\mathbb E\bigl[|Y^r_{\ell,j}|^p\bigr]<\infty.
\]
Since $j$ was arbitrary,
\[
\max_{1\le j\le k}\sup_{m\ge1}
\mathbb E\bigl[|X_{m,j}|^p\bigr]<\infty.
\]
Moreover, for every $N\in\mathbb N$,
\[
\mathbb E\bigl[\psi_N(X_j)\bigr]
=
\lim_{m\to\infty}\mathbb E\bigl[\psi_N(X_{m,j})\bigr]
\le 
\sup_{m\ge1}\mathbb E\bigl[|X_{m,j}|^p\bigr]<\infty.
\]
Again, letting $N\to\infty$ and using the monotone convergence theorem, we get
\[
\mathbb E\bigl[|X_j|^p\bigr]<\infty.
\]
Thus, every component of $X$ has finite moments of all orders, and hence the matrix
$\Sigma^d(X)$ is well defined.
\end{proof}

\medskip

\noindent
{\bf Proof of Corollary~\ref{cor-main}.}
Choose a sequence $X_m=f_m(\xi_m)$ converging in distribution to $X$, where
$f_m\in\mathcal P_d(\mathbb R^{n_m};\mathbb R^k)$ and $\xi_m$ has a
log-concave distribution. By Lemma~\ref{lem-moments-lc-closure}, for every
$p\ge1$,
\[
\max_{1\le j\le k}\sup_{m\ge1}
\mathbb E\bigl[|X_{m,j}|^p\bigr]<\infty.
\]
Hence, by \cite[Theorem~3.5]{Billingsley99}, for every polynomial $Q$ on
$\mathbb R^k$,
\[
\mathbb E\bigl[Q(X_m)\bigr]\to \mathbb E\bigl[Q(X)\bigr].
\]
Applying this to the polynomial moments entering $\operatorname{Var}$ and
$\Sigma^d$, we get
\[
\operatorname{Var}(X_{m,j})\to \operatorname{Var}(X_j)
\quad
\forall j\in\{1,\ldots,k\},
\]
and
\[
\Sigma^d(X_m)\to \Sigma^d(X).
\]
Therefore, for all sufficiently large $m$,
\[
\frac a2\le \operatorname{Var}(X_{m,j})\le 2b
\quad
\forall j\in\{1,\ldots,k\},
\quad\text{and}\quad
\det \Sigma^d(X_m)\ge \frac{\eta}{2}.
\]

By Theorems~\ref{th-main-lorentz} and~\ref{th-main-besov}, there exist
constants 
\[
C_1:=C_1(a,\eta,k,d)>0\quad\text{and}\quad
C_2:=C_2(a,b,\eta,k,d)>0
\] 
such that, for all sufficiently large $m$,
\[
\mathbb E\bigl[\psi(X_m)\bigr]
\le C_1\|\psi\|_{L^1(\mathbb R^k)}^\frac{1}{k(d-1)+1}
\]
for every $\psi\in C_0^\infty(\mathbb R^k)$ satisfying $0\le\psi\le1$, and
\[
\left|\mathbb E\bigl[\partial_\theta\varphi(X_m)\bigr]\right|
\le C_2t^\frac{1}{k(d-1)+1}
\]
for every unit vector $\theta\in\mathbb R^k$, every $t>0$, and every
$\varphi\in C_b^\infty(\mathbb R^k)$ satisfying
\[
\|\varphi\|_\infty\le t,
\quad
\|\partial_\theta\varphi\|_\infty\le 1.
\]

Passing to the limit as $m\to\infty$, we obtain the same two estimates with
$X$ in place of $X_m$. 
Lemma~\ref{lem-lorentz-equivalence} gives
\[
\mathbb P(X\in A)
\le
C_1\bigl(\lambda_k(A)\bigr)^\frac{1}{k(d-1)+1}
\]
for every Borel set $A\subset\mathbb R^k$, and the definition of
$\sigma(\mathbb P\circ X^{-1},\cdot)$ gives
\[
\sigma(\mathbb P\circ X^{-1},t)
\le
C_2t^\frac{1}{k(d-1)+1}
\quad
\forall t>0.
\]
This completes the proof.
\qed

\medskip

\noindent
{\bf Proof of Corollary~\ref{cor-converg}.}
By Corollary~\ref{cor-main}, there exists
$C_0:=C_0(a,b,\eta,k,d)>0$ such that
\[
\sigma(\mathbb P\circ X^{-1},t)\le C_0t^\frac{1}{k(d-1)+1},
\quad
\sigma(\mathbb P\circ Y^{-1},t)\le C_0t^\frac{1}{k(d-1)+1}
\quad
\forall t>0.
\]
By~\cite[Lemma 3.1]{Kos-FCAA},
\[
d_{\rm TV}(X,Y)
\le
6\sqrt{k}\max\bigl\{\sigma(\mathbb P\circ X^{-1},t),
\sigma(\mathbb P\circ Y^{-1},t)\bigr\}
+\sqrt{k}\,t^{-1}d_{\rm KR}(X,Y)
\quad
\forall t\in(0,1].
\]
Hence,
\[
d_{\rm TV}(X,Y)
\le
C_1t^\frac{1}{k(d-1)+1}+\sqrt{k}t^{-1}d_{\rm KR}(X,Y)
\quad
\forall t\in(0, 1],
\]
where $C_1:=C_1(a,b,\eta,k,d)$ depend only on $a,b,\eta,k,d$. Choosing
\[
t=\Bigl(\frac{1}{2}d_{\rm KR}(X,Y)\Bigr)^{\frac{k(d-1)+1}{k(d-1)+2}}
\]
gives
\[
d_{\rm TV}(X,Y)
\le
C_2\, d_{\rm KR}(X,Y)^{\frac{1}{k(d-1)+2}}
\]
for some $C_2:=C_2(a,b,\eta,k,d)$ depend only on $a,b,\eta,k,d$.

It remains to prove the second assertion. Let $X_n\in\mathcal P_{d,k}^{\rm lc}$
converge in distribution to $X_\infty$, and assume that
$\mathbb P\circ X_\infty^{-1}$ is absolutely continuous. By the definition of
$\mathcal P_{d,k}^{\rm lc}$ and a diagonal argument,
$X_\infty\in\mathcal P_{d,k}^{\rm lc}$. 
Since the distribution is absolutely continuous, 
$\operatorname{Var}(X_{\infty,j})>0$
for all $j=1,\ldots,k$,
and
\[
\det\Sigma^d(X_\infty)>0.
\]
By Lemma~\ref{lem-moments-lc-closure} and \cite[Theorem~3.5]{Billingsley99},
\[
\operatorname{Var}(X_{n,j})\to \operatorname{Var}(X_{\infty,j})
\quad
\forall j\in\{1,\ldots,k\},
\]
and
\[
\Sigma^d(X_n)\to \Sigma^d(X_\infty).
\]
Hence there exist $a,b,\eta>0$ and $n_0\in\mathbb N$ such that, for all
$n\ge n_0$,
\[
a\le \operatorname{Var}(X_{n,j})\le b,
\quad
a\le \operatorname{Var}(X_{\infty,j})\le b
\quad
\forall j\in\{1,\ldots,k\},
\]
and
\[
\det\Sigma^d(X_n)\ge\eta,
\quad
\det\Sigma^d(X_\infty)\ge\eta.
\]
Applying the first part of the corollary to $X_n$ and $X_\infty$, we obtain
\[
d_{\rm TV}(X_n,X_\infty)
\le
C d_{\rm KR}(X_n,X_\infty)^{\frac{1}{k(d-1)+2}}
\quad
\forall n\ge n_0.
\]
Increasing $C$ to account for the finitely many indices $n<n_0$ completes the
proof.
\qed

\section*{Use of AI Tools}

ChatGPT was used for language editing, stylistic suggestions, draft wording for
selected passages, and help with locating some references. All AI-generated
text and suggested references were checked, corrected where necessary, and
substantially revised by the authors. The authors take full responsibility for
the content of the paper.





\section*{Acknowledgements}

The authors would like to thank Sergey Tikhonov for reading the manuscript and for valuable comments and suggestions.

\smallskip

This work was supported by the AEI grants 
RYC2023-043616-I and
PID2025-169712NA-I00 funded by MICIU/AEI/10.13039/501100011033,
and by the Spanish State Research Agency, through the Severo Ochoa and Mar\'ia de Maeztu Program for Centers and
Units of Excellence in R\&D (CEX2020-001084-M).
The authors thanks CERCA Programme (Generalitat de Catalunya) for institutional support.
{\sloppy
	
}

\end{document}